\numberwithin{equation}{section}
\newtheorem{theorem}{Theorem}[section]
\newtheorem{lemma}{Lemma}[section]
\newtheorem{corollary}{Corollary}[section]
\newtheorem{proposition}{Proposition}[section]
\newtheorem{remark}{Remark}[section]
\newtheorem{assumption}{Assumption}[section]
\def\bu{{\bf u}}
\def\bx{{\bf x}}
\def\bX{{\bf X}}
\def\by{{\bf y}}
\def\bz{{\bf z}}
\def\bv{{\bf v}}
\def\cE{\mathcal{E}}
\def\delE{\delta E}
\def\delcE{\delta\cE}
\def\cX{X}%{\mathcal{F}} %diameter of  positions
\def\cXtau{\mathcal{X}} %X_\tau}%{\mathcal{F}} %diameter of anticipated poitions
\def\kr{\delta_r} %\mathfrak{r}}
\def\kapta{\kappa} %instead of delta
\def\lamta{\eta} % the power decay
\def\wU{U}
\def\Phij{\Phi_{ij}}
\def\aL{{\mathbf A}}
\newcommand\braket[1]{{\langle{#1}\rangle}}
\def\brr{\braket{r}}
\def\brt{\braket{t}}
\def\brx{\braket{\bx}}
\def\brz{\braket{\bz}}
\def\cof{c}
\def\ds{\displaystyle} 
\def\om{\omega}
\def\omp{\omega'}
\newcommand{\rd}{\,\mathrm{d}}
\newcommand{\rD}{\,\mathrm{D}}
\renewcommand{\geq}{\geqslant}
\renewcommand{\ge}{\geqslant}
\renewcommand{\leq}{\leqslant}
\renewcommand{\le}{\leqslant}
\begin{document}
%%%%%%%%%

\title[Anticipation breeds alignment]
{Anticipation breeds alignment}

\author{Ruiwen Shu}
\address{\noindent\newline Ruiwen Shu\newline
Department of Mathematics\newline
and\newline
Center for Scientific Computation and Mathematical Modeling (CSCAMM)\newline
University of Maryland, College Park MD 20742}
\email{rshu@cscamm.umd.edu}

\author{Eitan Tadmor}
\address{\noindent\newline Eitan Tadmor\newline
Department of Mathematics, Institute for Physical Sciences \& Technology (IPST)\newline
\and\newline
Center for Scientific Computation and Mathematical Modeling (CSCAMM)\newline
University of Maryland, College Park MD 20742}
\email{tadmor@cscamm.umd.edu}

\date{\today}

\subjclass{82C21, 82C22, 92D25, 35Q35.}

%%%% Keyword entries to be placed here %%%%
\keywords{anticipation, alignment, attractive-repulsive potentials, hypocoercivity, hydrodynamics, critical thresholds.}

\thanks{\textbf{Acknowledgment.} Research was supported  by NSF grants DMS16-13911, RNMS11-07444 (KI-Net) and ONR grant N00014-1812465. ET thanks the hospitality of the Institut of Mittag-Leffler during  fall 2018 visit which initiated this work, and of the Laboratoire Jacques-Louis Lions   in Sorbonne University during spring 2019, with  support through ERC grant 740623 under the EU Horizon 2020, while concluding this work.}
\date{\today}

\begin{abstract}
 We study the large-time behavior of  systems driven by radial potentials,  which react to \emph{anticipated} positions, $\bx^\tau(t)=\bx(t)+\tau \bv(t)$, with  anticipation increment $\tau>0$. As a special case, such systems yield the celebrated Cucker-Smale model for alignment, coupled with pairwise interactions.
Viewed from this perspective, such anticipated-driven systems are expected to emerge into \emph{flocking} due to alignment of velocities, and   \emph{spatial concentration} due to confining potentials. We treat both the discrete dynamics and large crowd hydrodynamics, proving  the decisive role of anticipation in driving such systems with attractive potentials into velocity alignment and spatial concentration. We also study the concentration effect  near equilibrium for anticipated-based dynamics of pair of agents governed by attractive-repulsive  potentials.
\end{abstract}

\maketitle
\setcounter{tocdepth}{1}
\tableofcontents

%%%%%%%%%%
\section{Introduction and statement of main results}
Consider the dynamical system
\[
\left\{ \ \ \begin{split}
& \dot{\bx}_i(t)=\bv_i(t) \\
& \dot{\bv}_i(t) = -\nabla_{i} {\mathcal H}_N(\bx_1^\tau, \ldots, \bx_N^\tau), \quad \bx_i^\tau= \bx_i^\tau(t) := \bx_i(t) + \tau \bv_i(t),
\end{split}\right.\qquad i=1,\dots,N.
\]
 When $\tau=0$, this is the classical $N$-particle dynamics for positions and velocities, $(\bx_i(t),\bv_i(t))\in (\mathbb{R}^d,\mathbb{R}^d)$, governed by the general Hamiltonian ${\mathcal H}_N(\cdots)$.
If we fix  a small  time step $\tau>0$, then  the system is not driven instantaneously but reacts to the positions $\bx^\tau(t)=\bx(t)+\tau \bv(t)$, \emph{anticipated} at time $t+\tau$, where $\tau$ is the anticipation time increment. Anticipation is a main feature in social dynamics of $N$-agent and $N$-player systems, \cite{GLL2010, MCEB2015,GTLW2017}.
A key feature in the the large time behavior of such anticipated dynamics  is the dissipation of the \emph{anticipated energy}
\[
\cE(t) = \frac{1}{2N}\sum_i |\bv_i|^2 + \frac{1}{N}{\mathcal H}_N(\bx_1^\tau,\ldots, \bx_N^\tau),
\]
at a rate given by 
\[
\begin{split}
\frac{\rd}{\rd{t}}\cE(t) =  \frac{1}{N}\sum_i \bv_i\cdot \dot{\bv}_i + \frac{1}{N}\sum_{i} \nabla_{i} {\mathcal H}_N(\bx_1^\tau,\ldots, \bx_N^\tau)\cdot (\bv_i + \tau \dot{\bv}_i) = -  \frac{\tau}{N}\sum_{i} | \dot{\bv}_i|^2, \quad \tau>0.
\end{split}
\]
We refer to the quantity on the right as the \emph{enstrophy} of the system.

\subsection{Pairwise interactions} In this work we study the anticiptaion dynamics of pairwise interactions

\begin{equation}\tag{AT}\label{eq:AT}
\left\{\ \ \begin{split}
& \dot{\bx}_i(t)=\bv_i(t) \\
& \dot{\bv}_i(t) = -\frac{1}{N}\sum_{j=1}^N \nabla U(|\bx_i^\tau-\bx_j^\tau|), \quad \bx_i^\tau=  \bx_i(t) + \tau \bv_i(t),
\end{split}\right.\qquad i=1,\dots,N,
\end{equation}
governed by a radial interaction potential $\wU(r), \, r=|\bx|$. This corresponds to the Hamiltonian
$\ds 
{\mathcal H}_N(\bx_1^\tau,\ldots,\bx_N^\tau)=\frac{1}{2N}\sum_{j,k}U(|\bx_j^\tau-\bx_k^\tau|)$, where the conservative $N$-body problem ($\tau=0$) is now 
replaced by $N$-agent dynamics with anticipated energy dissipation
\begin{equation}\label{energy}
\begin{split}
\frac{\rd}{\rd{t}}\cE(t) =   -  \frac{\tau}{N}\sum_{i} | \dot{\bv}_i|^2, \qquad \cE(t):=\frac{1}{2N}\sum_i |\bv_i|^2 +\frac{1}{2N^2}\sum_{i,j}U(|\bx_i^\tau-\bx_j^\tau|), \quad \tau>0.
\end{split}
\end{equation}
To gain a better insight  into \eqref{eq:AT}, expand in $\tau$ to obtain
\begin{equation}\tag{$\Phi$U}\label{eq:UF}
\left\{\ \ \begin{split}
& \dot{\bx}_i=\bv_i \\
& \dot{\bv}_i = \frac{\tau}{N}\sum_{j=1}^N \Phij(\bv_j-\bv_i) -\frac{1}{N}\sum_{j=1}^N \nabla U(|\bx_i-\bx_j|),
\end{split}\right.\qquad i=1,\dots,N.
\end{equation}
The anticipation  \eqref{eq:AT} is recovered in terms of the  Hessian, $\Phij=D^2U(|\bx_i^{\tau_{ij}}-\bx_j^{\tau_{ij}}|)$, evaluated at the mean-value anticipated times $\tau_{ij}(t)\in [0,\tau]$. Since these mean-valued times are not readily available, we will consider  \eqref{eq:UF} for a general class of  \emph{symmetric communication matrices} 
\[
{\sf \Phi}:= \left\{ \Phi(\cdot,\cdot)\in \text{Sym}_{d\times d} \ | \ \Phij:=\Phi\big((\bx_i,\bv_i),(\bx_j,\bv_j)\big)=\Phi_{ji}\right\}.
\]
The so-called \ref{eq:UF} system provides a unified framework for anticipation dynamics by coupling  general symmetric communication matrices, $\{\Phi \in {\sf \Phi}\}$, together with pairwise interactions induced by the potential $U$. 
In particular, the  anticipation dynamics  \eqref{eq:AT} yields upon linearization, the celebrated Cucker-Smale (CS) model \cite{CS2007a,CS2007b} --- a prototypical model for alignment dynamics in which 
$\max_{i,j}|\bv_i(t)-\bv_j(t)|\stackrel{t\rightarrow \infty}{\longrightarrow}0$.
 There is, however,  one distinct difference:  while the CS model is governed by a \emph{scalar} kernels involving geometric distances, $\Phij=\phi(|\bx_i-\bx_j|){\mathbb I}_{d\times d}$, \eqref{eq:UF} allows for  larger class of  communication protocols based on \emph{matrix} kernels, e.g., $\Phij=\Phi(\bx_i,\bx_j)$, with a possible dependence on   topological distances, \cite{ST2018b}. The flocking behavior of such \emph{matrix-based} CS models is proved in section \ref{subsec:CS}.
 
Viewed from the perspective of Cucker-Smale alignment dynamics, the large time behavior of  \eqref{eq:AT},\eqref{eq:UF}, is expected to emerge into \emph{flocking} due to alignment of velocities. Moreover, our study \cite{ST2019} shows that the  presence of  pairwise interactions leads (at least in the quadratic case $U(r) \sim \nicefrac{r^2}{2}$),  to  \emph{spatial concentration}. Similarly, spatial concentration   due to the confinement effect is expected for a large(r) class of pairwise interaction potentials $U$. The main purpose of this paper is to study the large time behavior of \eqref{eq:AT} and \eqref{eq:UF},  proving the decisive role of anticipation in driving the dynamics  of velocity alignment and spatial concentration.

We begin in section \ref{sec:convex}  with  the general system \eqref{eq:UF}. 
The basic bookkeeping associated with \eqref{eq:UF} quantifies its decay rate of  the (instantaneous) energy
\[
E(t) := \frac{1}{2N}\sum_i |\bv_i|^2 + \frac{1}{2N^2}\sum_{i,j} U(|\bx_i-\bx_j|),
\]
which is given by 
\begin{equation}\label{energy1}
\begin{split}
\frac{\rd}{\rd{t}}E(t) = & -  \frac{\tau}{2N^2}\sum_{i,j} (\bv_j-\bv_i)^\top \Phij(\bv_j-\bv_i).
\end{split}
\end{equation}
(This will  be contrasted  with the dissipation of anticipated energy  \eqref{energy} in section \ref{sec:attraction} below.)
To  explore the  enstrophy  on the right of \eqref{energy1} we need to further elaborate on properties of the  the communication matrices $\Phij=\Phi(\cdot,\cdot)$  vis a vis their relations with the potential $U$.
 
We study the dynamics  induced by potentials $U$ which are at least $C^2$. As a result, $\wU'(0)=0$, and we may assume $\wU(0)=0$ by adding a constant to it. 
We start by rewriting  the Hessian of the radial potential in the form
\begin{equation}\label{eq:comm}
D^2 U(|\bx_i-\bx_j|) = \frac{\wU'(r_{ij})}{r_{ij}} ({\mathbb I}-\widehat{\bx}_{ij}\widehat{\bx}_{ij}^\top) + \wU''(r_{ij})\widehat{\bx}_{ij}\widehat{\bx}_{ij}^\top,\quad r_{ij}:=|\bx_i-\bx_j|, \quad \widehat{\bx}_{ij}:=\frac{\bx_i-\bx_j}{r_{ij}},
\end{equation}
 and observe that the symmetric matrix $D^2U(|\bx_i-\bx_j)$ has  a single eigenvalue $\wU''(r_{ij})$ in the radial direction, $\bx_{i}-\bx_j$, and $d-1$ multiple  of the eigenvalues $\frac{\wU'(r_{ij})}{r_{ij}}$ in  tangential directions $(\bx_i-\bx_j)^\perp$. We now classify the classes of potentials we will be working with, according to the their short-range and long-range behavior. We begin by postulating  that the communication matrix $D^2U(|\bx_i-\bx_j|)$ is always bounded: 
 there exists a constant $A>0$ such that 
 \begin{equation}\label{eq:bounded}
|\wU''(r)| \leq A.
\end{equation}
\noindent
It follows that $\displaystyle |\wU'(r)| \leq \int_0^r |\wU''(s)|\rd{s} \leq \int_0^r A\rd{s} = Ar$
and hence $|D^2U(\cdot)|\leq A$. 
The assumed bound \eqref{eq:bounded} rules out the important class of kernels with short-range singularity (in both first- and second-order dynamics), e.g., \cite{Ja2014,Go2016, CCTT2016,PS2017,ST2017,CCP2017,Se2018,ST2018,DKRT2018,MMPZ2019},  which is left for a future study. Next, we distinguish between different  $U$'s according to their long range behavior. 

\subsection{Anticipation dynamics with convex potentials}
Recall that the flocking behavior of  CS model , see \eqref{eq:CS} below, is guaranteed for  \emph{scalar} communication kernels, $\Phi(r)=\phi(r){\mathbb I}$, which satisfy a so-called \emph{fat tail condition}, \cite{HL2009},\cite[Proposition 2.9]{MT2014}, 
\[
\int \phi(r)\rd{r} = \infty,
\]
or --- expressed in terms of its decay rate\footnote{Throughout this paper, we  use the notation $\brr^\beta := (1+r^2)^{\beta/2}$ for scalar $r$ and $\brz = \braket{|\bz|}$ for vectors $\bz$.}, $\phi(r)\sim \langle r\rangle^{-\beta}$ for $0\le \beta \le 1$. Since the anticipation model \eqref{eq:AT} can be viewed as a special case of \eqref{eq:UF} with  communication matrix evaluated at intermediate anticipated times, $\Phij= D^2U(|\bx_i^{\tau_{ij}}-\bx_j^{\tau_{ij}}|)$, it is natural to quantify the convexity of $U$ and positivity of $\Phi$ in terms of their `fat tail' decay. 

 \begin{assumption}[{\bf Convex potentials}]\label{ass:convex}
 There exist  constants $0< a<A$ and $\beta \in [0,1]$ such that 
 \begin{equation}\label{eq:convex}
a\brr^{-\beta} \leq \wU''(r) \leq A, \qquad 0\leq \beta \leq 1.
\end{equation}
 \end{assumption}
\noindent
It then follows that 
$\displaystyle 
\wU'(r) = \int_0^r \wU''(s)\rd{s} \ge \int_0^r a\langle s\rangle^{-\beta}\rd{s} \geq  a\langle r\rangle^{-\beta}r$, and hence $D^2U$ in \eqref{eq:comm} satisfies the fat tail condition $D^2U(|\bx|) \geq a\brx^{-\beta}$ with $0\leq \beta \leq 1$.

 \begin{assumption}[{\bf Positive kernels}]\label{ass:positive}
 There exist  constants $0<\phi_- <\phi_+$  such that 
 \begin{equation}\label{eq:positive}
\phi_-(\braket{\bx_i-\bx_j}+\braket{\bv_i-\bv_j})^{-\gamma} \leq \Phij \leq \phi_+, \qquad 0\leq \gamma <1.
\end{equation}
 \end{assumption}
 
Observe that \eqref{eq:UF} conserves momentum
\begin{equation}\label{momentum}
\left\{\begin{split}
& \dot{\widebar{\bx}}=\widebar{\bv},\qquad  \widebar{\bx}:=\frac{1}{N}\sum_i \bx_i,\\
& \dot{\widebar{\bv}}=0, \qquad  \widebar{\bv}:=\frac{1}{N}\sum_i \bv_i.
\end{split}\right.
\end{equation}
It follows that the mean velocity $\widebar{\bv}$  is constant in time,  $\widebar{\bv}(t)=\widebar{\bv}_0$, and hence $\widebar{\bx}(t) = \widebar{\bx}_0 + t\widebar{\bv}_0$.
Our first main result is expressed in terms of the \emph{energy fluctuations}
 \[
 \delE(t) := \frac{1}{2N}\sum_{i} |\bv_i-\widebar{\bv}|^2 + \frac{1}{2N^2}\sum_{i,j} U(|\bx_i-\bx_j|).
 \]
\begin{theorem}[{\bf Anticipation dynamics  \eqref{eq:UF} --- velocity alignment and spatial concentration}]\label{thm:thm1}
Consider the  anticipation dynamics \eqref{eq:UF}. Assume a bounded convex potential $U$ with fat-tail decay of order $\beta$,  \eqref{eq:convex}, and a symmetric kernel matrix $\Phi$ with a fat-tail decay of order $\gamma$,  \eqref{eq:positive}. If the decay parameters lie in the restricted range $3\beta +2\max\{\beta,\gamma\} <4$, then there is   sub-exponential decay of the  energy fluctuations
\begin{equation}\label{eq:Edecay}
\delE(t) \le Ce^{-t^{1-\lamta}},  \qquad  \lamta=\frac{2\max\{\beta,\gamma\}}{4-3\beta}<1.
\end{equation}
We conclude  that for large time, the  dynamics  concentrates in space with global velocity alignment at sub-exponential rate,
\begin{equation}
|\bx_i(t)-\widebar{\bx}(t)|\rightarrow 0,\quad |\bv_i(t)-\widebar{\bv}_0|\rightarrow 0, \qquad \widebar{\bx}(t)=\widebar{\bx}_0+t\widebar{\bv}_0.
\end{equation}
\end{theorem}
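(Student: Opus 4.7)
The structure is a hypocoercivity argument: the enstrophy identity \eqref{energy1} dissipates only the kinetic fluctuation $V(t):=\tfrac1N\sum_i|\bv_i-\widebar\bv|^2$, so I build a modified Lyapunov functional that couples positions and velocities in order to dissipate the full $\delE$.

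\textbf{Step 1 (a priori bounds).} Positivity of $\Phi$ (Assumption \ref{ass:positive}) and \eqref{energy1} give $\dot E\le 0$, hence $E(t)\le E(0)$. By momentum conservation \eqref{momentum}, I pass to the center-of-mass frame in which $\widebar\bv\equiv 0$ and $\widebar\bx\equiv 0$; then $V(t)\le 2E(0)$ and $P(t):=\tfrac{1}{2N^2}\sum_{i,j}U(|\bx_i-\bx_j|)\le E(0)$. Integrating Assumption \ref{ass:convex} twice gives $U(r)\gtrsim r^{2-\beta}$ at infinity (with a logarithmic correction when $\beta=1$), so the uniform bound on $P$ forces a uniform diameter bound $\max_{i,j}|\bx_i-\bx_j|\le D^{*}$. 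Combined with $V\le 2E(0)$, Assumption \ref{ass:positive} then yields a time-independent lower bound $\Phi_{ij}\ge\phi_{*}>0$ and therefore $\dot E\le -c\tau\phi_{*}V$.

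\textbf{Step 2 (hypocoercive Lyapunov).} To transfer the dissipation of $V$ into dissipation of $P$, introduce
\[
F(t):=\frac1N\sum_i\bv_i\cdot\bx_i,\qquad L(t):=\delE(t)+\epsilon F(t),\qquad 0<\epsilon\ll 1.
\]
Using \eqref{eq:UF}, $\dot{\bx}_i=\bv_i$, and the symmetry of $\Phi$ one finds
\[
\dot F = V(t) - \frac{1}{2N^2}\sum_{i,j}U'(r_{ij})\,r_{ij} + \frac{\tau}{N^2}\sum_{i,j}\Phi_{ij}(\bv_j-\bv_i)\cdot\bx_i.
\]
Convexity with $U(0)=0$ gives $U'(r)r\ge U(r)$, so the middle term is $\le-2P$. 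Cauchy--Schwarz, $|\bx_i|\le D^{*}$, and the fat-tail bound $U''\ge a\brr^{-\beta}$ (which implies $X:=\tfrac1N\sum|\bx_i|^2\le C\braket{D^{*}}^{\beta}P$) control the cross term by $C\sqrt V\sqrt X$; after Young this is absorbed into a small multiple of $V+P$. Choosing $\epsilon\sim\tau\phi_{*}$ small enough yields the coercive bound $\dot L\le -c_1V-c_2\braket{D^{*}}^{-\beta}P$ with $L$ equivalent to $\delE$.

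\textbf{Step 3 (subexponential rate and consequences).} Step 2 alone gives exponential decay with constants that degenerate as $\beta,\gamma\to 1$. To obtain the sharp rate $\lamta=2\max\{\beta,\gamma\}/(4-3\beta)$, I revisit every estimate with the \emph{running} diameter $D(t)\lesssim P(t)^{1/(2-\beta)}$ and the refined communication bound $\Phi_{ij}(t)\ge\phi_{-}(D(t)+V(t)^{1/2})^{-\gamma}$. Tracking how $D(t)^{\beta}$ enters the cross-term constant and how $\gamma$ further degrades the damping produces a differential inequality of the form $\dot L\lesssim -c\,\brt^{-\lamta}L$, which integrates via Gronwall to $\delE(t)\le Ce^{-t^{1-\lamta}}$. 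The hypothesis $3\beta+2\max\{\beta,\gamma\}<4$ is precisely $\lamta<1$, so the bound is genuinely decaying. Finally, $\delE\to 0$ together with strict convexity of $U$ yields $|\bx_i-\widebar\bx(t)|\to 0$ (spatial concentration) and $|\bv_i-\widebar\bv_0|\to 0$ (velocity alignment), with $\widebar\bx(t)=\widebar\bx_0+t\widebar\bv_0$ by \eqref{momentum}.

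\textbf{Main obstacle.} Step 2 is standard hypocoercivity; the real work lies in Step 3, the coupled book-keeping of the two fat-tail exponents $\beta$ (confinement) and $\gamma$ (communication) through the cross term. The position-variance $X$ degrades by $\braket{D(t)}^{\beta}$ while the damping degrades by $(\braket{D(t)}+\braket{V(t)^{1/2}})^{\gamma}$; the specific exponent $\lamta=2\max\{\beta,\gamma\}/(4-3\beta)$ records the worst of these compounded degradations after all Young and AM--GM rearrangements close --- and, crucially, without requiring extra smallness in $\tau$.
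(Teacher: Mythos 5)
There is a genuine gap, and it sits at the foundation of your argument: the a priori control of positions. In Step 1 you claim that $P(t)\le E(0)$ forces a \emph{time-independent} diameter bound $\max_{i,j}|\bx_i-\bx_j|\le D^{*}$. This is false uniformly in $N$: the total potential energy only gives $U(|\bx_i-\bx_j|)\le 2N^2E(0)$ per pair (or $\approx NE(0)$ for one outlier against a cluster), so the resulting diameter bound scales like $(NE(0))^{1/(2-\beta)}$. An $N$-dependent $D^{*}$ would indeed give $\Phi_{ij}\ge\phi_{*}(N)>0$ and hence plain \emph{exponential} decay with $N$-dependent constants --- but then the sub-exponential rate, the exponent $\lamta=2\max\{\beta,\gamma\}/(4-3\beta)$, and the restriction $3\beta+2\max\{\beta,\gamma\}<4$ would all be vacuous, which signals that this is not the content of the theorem. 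The paper's actual first ingredient is Lemma \ref{lem:convex}: an $N$-independent but time-\emph{growing} bound $\max_i|\bx_i(t)|\le C_\infty\brt^{2/(4-3\beta)}$, obtained not from the conserved total energy but from the non-monotone ``particle energy'' $E_i=\tfrac12|\bv_i|^2+\tfrac1N\sum_jU(|\bx_i-\bx_j|)$, whose growth is controlled via Jensen's inequality and a Riccati-type inequality $f'\le C_1+C_2f^{\beta/(4-2\beta)}$. The denominator $4-3\beta$ in $\lamta$ is born exactly there, and nothing in your proposal produces it.

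Your Step 3 then contradicts Step 1: you introduce a ``running diameter $D(t)\lesssim P(t)^{1/(2-\beta)}$'', which again holds only with $N$-dependent constants, and which in any case would be \emph{bounded} in time (since $P(t)\le E(0)$), returning you to the exponential-decay regime rather than producing $\dot L\lesssim-\brt^{-\lamta}L$. So the asserted differential inequality and the specific exponent are not derived; they are imported from the answer. To repair the proof you would need to (i) prove the polynomial-in-time, $N$-independent growth bound on $\max_i|\bx_i|$ and $\max_i|\bv_i|$ first, (ii) feed it into the fat-tail assumptions to get $\Phi_{ij}\ge c\brt^{-2\gamma/(4-3\beta)}$ and $U(|\bx_i-\bx_j|)\ge c\brt^{-2\beta/(4-3\beta)}|\bx_i-\bx_j|^2$, and (iii) run the hypocoercivity step with a time-\emph{decaying} weight $\epsilon(t)=\epsilon_0\brt^{-\alpha}$ (so that the cross term stays dominated by $E$ and by the degrading dissipation), which is what the paper does. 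Your Step 2 skeleton (the cross term $\tfrac1N\sum_i\bx_i\cdot\bv_i$, the inequality $U'(r)r\ge U(r)$, weighted Cauchy--Schwarz against $\Phi$) is the right hypocoercivity mechanism and matches the paper; it is the missing $L^\infty$ lemma and the time-dependent bookkeeping built on it that constitute the real work.
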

\noindent
The proof of Theorem \ref{thm:thm1} proceeds in two steps:\newline
 {\bf(i)} A uniform bound, outlined in lemma \ref{lem:convex} below, on maximal spread of positions $|\bx_i(t)|$,
 \begin{equation}\label{eq:bound1}
  \max_i |\bx_i(t)|  \le C_\infty\brt^{\frac{2}{4-3\beta}}, \quad \max_i|\bv_i(t)| \leq C_\infty\brt^{\frac{2-\beta}{4-3\beta}}, \qquad 0\leq \beta \leq 1.
 \end{equation}
{\bf (ii)} Observe that  in view of \eqref{momentum}, $\frac{\rd}{\rd{t}}\delE(t)=\frac{\rd}{\rd{t}}E(t)$. The energy dissipation \eqref{energy1} combined with the bounds \eqref{eq:positive},\eqref{eq:bound1} imply the decay of energy fluctuations
\[
\frac{\rd}{\rd{t}}\delE(t) =\frac{\rd}{\rd{t}}E(t) \lesssim -\frac{\tau}{2N}\brt^{-\frac{2\gamma}{4-3\beta}}\sum_{i} |\bv_i-\widebar{\bv}|^2.
\]
To close the last  bound we need a hypocoercivity argument carried out in section \ref{sec:convex}, which leads to the sub-exponential  decay \eqref{eq:Edecay}. The conclusion of sub-exponential flocking  $(\bx_i-\bx_j,\bv_i-\bv_i) \stackrel{t\rightarrow \infty}{\longrightarrow} 0$ follows, and naturally,  $(\bx_i,\bv_i) - (\widebar{\bx}(t),\widebar{\bv}_0)\rightarrow 0 $ since this is the only minimizer of  $\delE(t)$.

Since the anticipation dynamics \eqref{eq:AT} can be viewed as a special case of  \ref{eq:UF} system  with $\Phi=D^2U$ at intermediate anticipated time $\tau_{ij}$,  theorem \ref{thm:thm1} applies with $\gamma=\beta$.

\begin{corollary}[{\bf Anticipation dynamics \eqref{eq:AT} with convex potentials}]\label{cor:convex}
Consider the  anticipated dynamics \eqref{eq:AT} with   convex potential satisfying 
\[
a\langle r\rangle^{-\beta} \leq \wU''(r) \leq A, \qquad 0\leq \beta< \frac{4}{5}.
\]
Then there is sub-exponential decay of the  energy fluctuations
\begin{equation}
\delE(t) \le Ce^{-t^{1-{\lamta}}},\quad \lamta = \frac{2\beta}{4-3\beta}.
\end{equation}
The large time flocking behavior follows:  the  dynamics  concentrates in space with global velocity alignment at sub-exponential rate,
\begin{equation}
|\bx_i(t)-\widebar{\bx}(t)|\rightarrow 0,\quad |\bv_i(t)-\widebar{\bv}_0|\rightarrow 0
\end{equation}
\end{corollary}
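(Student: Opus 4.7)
The plan is to recognize \eqref{eq:AT} as an instance of the \eqref{eq:UF} framework with a specific symmetric communication matrix of fat-tail order $\gamma=\beta$, and then invoke Theorem~\ref{thm:thm1}. The identification via ``mean-value'' Hessians alluded to in the discussion preceding the corollary is made rigorous through the fundamental theorem of calculus; the rest is bookkeeping.

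Setting $\bz_{ij}(s):=(\bx_i-\bx_j)+s\tau(\bv_i-\bv_j)$ and differentiating $s\mapsto \nabla U(|\bz_{ij}(s)|)$ on $[0,1]$ yields
\[
\nabla U(|\bx_i^\tau-\bx_j^\tau|) = \nabla U(|\bx_i-\bx_j|) + \tau\,\Phi_{ij}(\bv_i-\bv_j), \qquad \Phi_{ij}:=\int_0^1 D^2U(|\bz_{ij}(s)|)\rd{s}.
\]
The parity $\bz_{ji}(s)=-\bz_{ij}(s)$ gives $\Phi_{ij}=\Phi_{ji}$, so substituting into \eqref{eq:AT} exhibits the system as a particular instance of \eqref{eq:UF}.

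To verify Assumption~\ref{ass:positive} with $\gamma=\beta$ for this $\Phi$, I would use the spectral decomposition \eqref{eq:comm}: the eigenvalues of $D^2U(|\bz|)$ are $\wU''(|\bz|)$ (radial) and $\wU'(|\bz|)/|\bz|$ (tangential, multiplicity $d-1$). Both inherit the fat-tail bounds of Assumption~\ref{ass:convex}: the radial one by \eqref{eq:convex} directly, and for the tangential one, the computation $\wU'(r)/r=r^{-1}\int_0^r\wU''(s)\rd{s}$ combined with monotonicity of $s\mapsto\langle s\rangle^{-\beta}$ gives $a\langle r\rangle^{-\beta}\le \wU'(r)/r\le A$. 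Hence $a\langle\bz\rangle^{-\beta}\mathbb{I}\preceq D^2U(|\bz|)\preceq A\,\mathbb{I}$ pointwise. Combined with the uniform comparison $\langle\bz_{ij}(s)\rangle\le C_\tau(\langle\bx_i-\bx_j\rangle+\langle\bv_i-\bv_j\rangle)$ valid for all $s\in[0,1]$, integration in $s$ yields
\[
\phi_-\bigl(\langle\bx_i-\bx_j\rangle+\langle\bv_i-\bv_j\rangle\bigr)^{-\beta}\mathbb{I}\preceq \Phi_{ij}\preceq A\,\mathbb{I},
\]
for some $\phi_->0$ depending only on $a,\tau,\beta$, matching \eqref{eq:positive} with $\gamma=\beta$.

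Finally, substituting $\gamma=\beta$ into Theorem~\ref{thm:thm1}, the admissibility window $3\beta+2\max\{\beta,\gamma\}<4$ collapses to $5\beta<4$, i.e.\ $\beta<\tfrac{4}{5}$, which is precisely the corollary's hypothesis, and the decay exponent becomes $\lamta=2\beta/(4-3\beta)$. Theorem~\ref{thm:thm1} then delivers the sub-exponential decay $\delE(t)\le Ce^{-t^{1-\lamta}}$ as well as the claimed velocity alignment and spatial concentration. The only step requiring care is preservation of the fat-tail lower bound under the time-averaging defining $\Phi_{ij}$; this reduces, as above, to the uniform comparability $\langle\bz_{ij}(s)\rangle\sim\langle\bx_i-\bx_j\rangle+\langle\bv_i-\bv_j\rangle$, after which the corollary is a direct application of Theorem~\ref{thm:thm1}.
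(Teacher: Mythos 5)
Your proposal is correct and follows exactly the paper's route: the corollary is obtained by recognizing \eqref{eq:AT} as an instance of \eqref{eq:UF} with $\Phi_{ij}$ an averaged Hessian inheriting the fat-tail bound \eqref{eq:positive} with $\gamma=\beta$, and then invoking Theorem~\ref{thm:thm1}, which collapses the admissibility condition to $5\beta<4$ and gives $\lamta=2\beta/(4-3\beta)$. Your integral-remainder representation $\Phi_{ij}=\int_0^1 D^2U(|\bz_{ij}(s)|)\rd{s}$ is simply the rigorous form of the paper's ``mean-value anticipated times'' phrasing, and your verification of \eqref{eq:positive} via the spectral decomposition \eqref{eq:comm} fills in the detail the paper leaves implicit.
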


\begin{remark}[{\bf Optimal result with improved fat-tail condition}]\label{rem:optimal}
Suppose we strengthen assumption \ref{ass:convex} with a more precise behavior of $U'' \sim \brr^{\beta}$, thus replacing \eqref{eq:convex} with the requirement that there exist constants $0<a < A$ and $\beta\in [0,1]$ such that
\begin{equation}\label{eq:convexop}
a\brr^{\beta} \leq U''(r), \quad U'(r) \leq A\brr^{1-\beta}, \qquad 0\leq \beta \leq 1.
\end{equation}
Then the  anticipation dynamics \eqref{eq:UF} with  a fat-tail kernel matrix $\Phi$  of order $\gamma$,  \eqref{eq:positive}, satisfies the   sub-exponential decay 
\begin{equation}\label{eq:Edecayop}
\delE(t) \le Ce^{-t^{1-\lamta}},  \qquad  \lamta=\min\Big\{1,\frac{2}{4-3\beta}\Big\}\cdot\max\{\beta,\gamma\}<1.
\end{equation}
This improved decay follows from the corresponding improvement of the uniform bound in lemma \ref{lem:convex} below which reads $\max_i|\bx_i(t)| \lesssim \brt$.
In the particular case of $\beta=\gamma$, we recover an improved corollary \ref{cor:convex} for anticipated dynamics \eqref{eq:AT}, where the  anticipated energy satisfies an optimal decay of order 
\[
\delta\cE(t) \leq  Ce^{-t^{1-\lamta}}, \qquad \lamta=\min\Big\{\frac{2\beta}{4-3\beta},\beta\Big\}<1.
\]
 \end{remark}

\subsection{Anticipation dynamics with purely attractive potential}
We now turn our attention to the main anticipation model \eqref{eq:AT}. 
 We already know the flocking behavior of \eqref{eq:AT} for convex potentials, from the general considerations of the \ref{eq:UF} system, summarized in corollary \ref{cor:convex}. In fact, the corresponding communication matrix of \eqref{eq:AT} prescribed in \eqref{eq:comm}, $D^2U$, has a special structure of rank-one modification of the scalar kernel $\displaystyle \frac{U'(r)}{r}$. This enables us to treat the flocking behavior of  \eqref{eq:AT} for a larger class of purely attractive potentials.
  \begin{assumption}[{\bf Purely attractive potentials}]\label{ass:attractive}
 There exists  constant $0< a < A$ such that 
 \begin{equation}\label{eq:attractive}
a\langle r\rangle^{-\beta} \leq \frac{\wU'(r)}{r} \leq A, \qquad 0\leq \beta\leq 1.
\end{equation}
 \end{assumption}

Our  result is expressed in terms of fluctuations of the anticipated energy 
 \[
 \delcE(t) := \frac{1}{2N}\sum_{i} |\bv_i-\widebar{\bv}|^2 + \frac{1}{2N^2}\sum_{i,j} U(|\bx^\tau_i-\bx^\tau_j|).
 \]
\begin{theorem}[{\bf Anticipation dynamics \eqref{eq:AT} with attractive potential}]\label{thm:thm2}
Consider the anticipation dynamics \eqref{eq:AT} with bounded potential \eqref{eq:bounded}. Assume that $U$ is purely attractive with a fat tail decay of order $\beta$, 
\[
a\langle r\rangle^{-\beta} \le \frac{\wU'(r)}{r} \leq A,  \qquad \beta<\frac{1}{3}. 
\]
Then there is  sub-exponential decay of the anticipated energy fluctuations
\begin{equation}\label{eq:Edecay1}
\delcE(t) \le Ce^{-t^{1-\lamta}},\qquad \lamta = \frac{2\beta}{1-\beta}.
\end{equation}
It follows that for large time, the anticipation dynamics   concentrates in space with global velocity alignment at sub-exponential rate,
\begin{equation}
|\bx_i(t)-\widebar{\bx}(t)|\rightarrow 0,\quad |\bv_i(t)-\widebar{\bv}_0|\rightarrow 0
\end{equation}
\end{theorem}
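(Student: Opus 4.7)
The plan is to mirror the two-step scheme of Theorem \ref{thm:thm1}, while exploiting the key new structural advantage that for the pure anticipation dynamics \eqref{eq:AT} the anticipated energy $\cE(t)$ is itself a Lyapunov functional, with dissipation $\dot\cE = -(\tau/N)\sum_i|\dot\bv_i|^2\le 0$. The first step (uniform bounds) is almost immediate: from $\cE(t)\le\cE(0)$ the kinetic part gives $\max_i|\bv_i(t)|\le C$ uniformly, and integrating $U'(r)\ge ar\braket{r}^{-\beta}$ yields $U(r)\gtrsim r^{2-\beta}$ for large $r$, so the potential part of $\cE$ delivers the time-\emph{uniform} bound $\max_{i,j}|\bx_i^\tau(t)-\bx_j^\tau(t)|\le D_\infty$; the fluctuation $|\bx_i-\widebar{\bx}|$ is then uniformly bounded via $\bx_i-\widebar{\bx} = (\bx_i^\tau-\widebar{\bx}^\tau)-\tau(\bv_i-\widebar{\bv}_0)$. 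This is strictly stronger than the polynomial spread of lemma~\ref{lem:convex}: no growing diameter has to be tracked.

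For the second (hypocoercivity) step, write the force in its scalar form $\dot\bv_i = -\tfrac{1}{N}\sum_j\psi_{ij}(\bx_i^\tau-\bx_j^\tau)$ with $\psi_{ij}:=U'(r_{ij}^\tau)/r_{ij}^\tau$. The attractive assumption and step~1 give $\psi_{ij}\ge c := a\braket{D_\infty}^{-\beta}$; reading the right-hand side as a complete-graph weighted Laplacian on the mean-zero subspace of $\bx^\tau$-vectors with spectral gap at least $Nc$ produces the Poincar\'e-type coercivity $\tfrac{1}{N}\sum_i|\dot\bv_i|^2 \ge c^2\cdot\tfrac{1}{N}\sum_i|\bx_i^\tau-\widebar{\bx}^\tau|^2$, so $-\dot{\delcE}$ controls the anticipated-position fluctuations. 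To capture also the kinetic fluctuations I would introduce the cross term $\mathcal{J}(t):=\tfrac{1}{N}\sum_i(\bv_i-\widebar{\bv}_0)\cdot(\bx_i-\widebar{\bx})$; symmetrizing,
\[
\dot{\mathcal{J}} = -\tfrac{1}{2N^2}\sum_{i,j}\psi_{ij}(\bx_i^\tau-\bx_j^\tau)\cdot(\bx_i-\bx_j) + \tfrac{1}{N}\sum_i|\bv_i-\widebar{\bv}_0|^2.
\]
Substituting $\bx_i-\bx_j = (\bx_i^\tau-\bx_j^\tau)-\tau(\bv_i-\bv_j)$ and using Young's inequality with $\psi_{ij}\le A$, the modified functional $L:=\delcE-\alpha\mathcal{J}$ (small $\alpha>0$) satisfies $\dot L \lesssim -K\big[\tfrac{1}{N}\sum_i|\bx_i^\tau-\widebar{\bx}^\tau|^2 + \tfrac{1}{N}\sum_i|\bv_i-\widebar{\bv}_0|^2\big]\lesssim -K'\delcE$, the last comparison using $U(r)\le Ar^2/2$. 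Tracking the $\beta$-dependence of $c=a\braket{D_\infty}^{-\beta}$ through the estimate produces the sub-exponential rate \eqref{eq:Edecay1}; the constraint $\beta<1/3$ is exactly $\lamta<1$. Flocking then follows from $\delcE(t)\to 0$ and $\bv_i-\widebar{\bv}_0=\tau^{-1}[(\bx_i^\tau-\widebar{\bx}^\tau)-(\bx_i-\widebar{\bx})]$.

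The main obstacle will be the hypocoercivity balance. Without a convexity lower bound on $U$, the Hessian decomposition $D^2U(\bx) = (U'(r)/r)(I-\hat\bx\hat\bx^\top) + U''(r)\hat\bx\hat\bx^\top$ fails to be positive semidefinite — its radial eigenvalue $U''$ can be negative — so the clean matrix-level hypocoercivity available in the convex case of \eqref{eq:UF} does not apply. The workaround is to use only the guaranteed tangential eigenvalue $U'(r)/r$ as the coercive coefficient, and to absorb the unsigned radial contribution via Young's inequality together with $|U''|\le A$ from \eqref{eq:bounded} and the uniform bounds of step one. This loss of matrix-level convexity is precisely what yields the weaker decay exponent $\lamta = 2\beta/(1-\beta)$ here, compared with $\lamta = 2\beta/(4-3\beta)$ in corollary~\ref{cor:convex}.
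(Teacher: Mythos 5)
Your overall architecture --- an a priori bound on the spread of anticipated positions, then a hypocoercivity argument with a position--velocity cross term, closed by converting the enstrophy $\frac{1}{N}\sum_i|\dot{\bv}_i|^2$ into fluctuations of anticipated positions --- is exactly the paper's. But your Step 1 contains a genuine gap that propagates through everything else. From $\cE(t)\le\cE(0)$ you only control the \emph{average} pairwise potential $\frac{1}{2N^2}\sum_{i,j}U(|\bx_i^\tau-\bx_j^\tau|)$, not its maximum; with $U(r)\gtrsim r^{2-\beta}$ this yields at best $\max_{i,j}|\bx_i^\tau-\bx_j^\tau|\lesssim (N\cE(0))^{1/(2-\beta)}$, and likewise $\max_i|\bv_i|\le\sqrt{2N\cE(0)}$ --- bounds that degenerate as $N$ grows. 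There is no cheap time-uniform, $N$-independent diameter bound here. Worse, your claim is internally inconsistent with the theorem you are proving: if $\psi_{ij}\ge c=a\braket{D_\infty}^{-\beta}$ with $D_\infty$ a genuine constant, your inequality $\dot L\lesssim -K'\delcE$ has constant coefficients and yields \emph{exponential} decay with no restriction on $\beta$; the sub-exponential rate $\lamta=2\beta/(1-\beta)$ and the threshold $\beta<1/3$ cannot be produced by ``tracking the $\beta$-dependence'' through time-independent constants. The paper's actual Step 1 (Lemma \ref{lem:bound}) is precisely the part you are skipping: a per-particle argument on the anticipated particle energy $\cE_i=\frac12|\bv_i|^2+\frac1N\sum_jU(|\bx_i^\tau-\bx_j^\tau|)$, combined with the confining bound $U(r)\gtrsim \brr^{2-\beta}$ and a Gronwall-type inequality $f'\le C_1+C_2f^{\beta/(2-\beta)}$, gives the $N$-independent but time-\emph{growing} bound $\max_i|\bx_i^\tau(t)|\lesssim\brt^{1/(2-2\beta)}$. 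Feeding $\lambda(t)\sim\brt^{-\beta/(2-2\beta)}$ into the coercivity constant $\lambda^4/\Lambda^2$ of Lemma \ref{lem:mean} is what generates the factor $\brt^{-2\beta/(1-\beta)}$, hence both the rate and the constraint $\beta<1/3$; correspondingly your $\alpha$ must be taken time-decaying, $\alpha=\epsilon(t)\sim\brt^{-2\beta/(1-\beta)}$, not a fixed small constant.

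Two further remarks. First, your coercivity step is a genuinely different (and sharper) route than the paper's Lemma \ref{lem:mean}: for the symmetric weights $\psi_{ij}=\psi_{ji}$, the weighted-Laplacian bound $\|L\bz\|\,\|\bz\|\ge\langle\bz,L\bz\rangle\ge\lambda\|\bz\|^2$ on the mean-zero subspace gives $\frac1N\sum_i|\dot{\bv}_i|^2\ge\lambda^2\cdot\frac1N\sum_i|\bx_i^\tau-\widebar{\bx}^\tau|^2$, beating the paper's constant $\lambda^4/\Lambda^2$; once Step 1 is repaired with the correct time-growing bound, this would improve the admissible range to $\beta<1/2$. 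Second, your closing diagnosis is off target: the paper's proof of this theorem never touches the Hessian decomposition, working instead with the scalar form $\nabla U=\frac{U'(r)}{r}(\bx_i^\tau-\bx_j^\tau)$ and the anticipated energy throughout, and the exponent $2\beta/(1-\beta)$ comes from the fourth power of $\lambda(t)$ in the means lemma combined with the $\brt^{1/(2-2\beta)}$ spread --- not from absorbing an indefinite radial eigenvalue of $D^2U$.
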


\begin{remark}
This result is surprising if one interprets \eqref{eq:AT} in its equivalent matrix formulation \eqref{eq:UF}, since attractive potentials do \emph{not} necessarily induce communication matrix $\Phi=D^2U$ which is positive definite. In particular, the corresponding  `regular' (instantaneous) energy $E(t)$ referred to in corollary \ref{cor:convex} is not necessarily decreasing; only the \emph{anticipated} energy is. 
\end{remark}

The proof of Theorem \ref{thm:thm2}, carried out in section \ref{sec:attraction}, involves two main ingredients.\newline
{\bf (i)}.  First, we derive an a priori uniform bound on the maximal spread of \emph{anticipated} positions $|\bx^\tau_i(t)|$, 
\begin{equation}\label{eq:bound}
 \max_i |\bx_i^\tau(t)|  \le C_\infty\brt^{\frac{1}{2-2\beta}}, \qquad 0\leq \beta < 1.
\end{equation}
{\bf (ii)}. A second main ingredient for  the proof of Theorem \ref{thm:thm2} is  based on the 
 energy dissipation \eqref{energy}. The key step  here is to relate the enstrophy
 in \eqref{energy},
  \begin{equation}\label{eq:key}
 \frac{\tau}{N}\sum_i|\dot{\bv}_i|^2=\frac{\tau}{N}\sum_i\Big|\frac{1}{N}\sum_j  c_{ij}(\bx_i^\tau-\bx_j^\tau)\Big|^2, \qquad c_{ij}=\frac{U'(|\bx_i^\tau-\bx_j^\tau|)}{|\bx_i^\tau-\bx_j^\tau|},
 \end{equation}
  to the fluctuations of the (expected) \emph{positions}. This is done by the following proposition, interesting for its own sake, which deals  with the  local vs. global means of arbitrary $\bz_j\in {\mathbb R}^d$.

\begin{lemma}[{\bf Local and global means are comparable}]\label{lem:mean}
Fix $0<\lambda  \le\Lambda $ and weights $\cof_{ij}$
\[
0< {\lambda} \leq \cof_{ij} \leq {\Lambda}.
\]
Then, there exists a constant $C=C(\lambda,\Lambda)\lesssim \frac{\Lambda^2}{\lambda^4}$ (which otherwise is independent of the $\cof_{ij}$'s and $N$) such that for arbitrary  $\bz_j \in {\mathbb R}^d$, with average $\widebar{\bz}:=\frac{1}{N}\sum_j \bz_j$, there holds
\begin{equation}\label{eq:means}
\frac{1}{N}\sum_i \left|\bz_i-\widebar{\bz}\right|^2 \leq \frac{C(\lambda,\Lambda)}{N} \sum_i \Big|\frac{1}{N}\sum_j \cof_{ij}(\bz_i-\bz_j)\Big|^2 , \qquad  C(\Lambda,\lambda) \lesssim \frac{\Lambda^2}{\lambda^4}.
\end{equation}
\end{lemma}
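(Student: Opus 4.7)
The plan is to identify the right-hand side of \eqref{eq:means} as a weighted graph-Laplacian Dirichlet energy on the complete graph $K_N$ with positive edge weights $\cof_{ij}/N\in[\lambda/N,\Lambda/N]$, and then to invoke a discrete Poincar\'e inequality on $K_N$ to recover the variance on the left. Because the underlying graph is complete and the weights are uniformly bounded below by $\lambda$, the associated spectral gap is of order $\lambda$, which is what ultimately controls the constant $C(\lambda,\Lambda)$.

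Concretely, I would proceed in four small steps. \emph{(i) Centering:} since $S_i:=\frac{1}{N}\sum_j \cof_{ij}(\bz_i-\bz_j)$ depends only on the differences $\bz_i-\bz_j$, the inequality is translation-invariant in $\bz$, so I may assume $\widebar{\bz}=0$ and reduce to proving $W \le C\,E$ with $W := \frac{1}{N}\sum_i|\bz_i|^2$ and $E := \frac{1}{N}\sum_i|S_i|^2$. \emph{(ii) Dirichlet pairing:} compute
\[
\frac{1}{N}\sum_i \bz_i\cdot S_i \;=\; \frac{1}{N^2}\sum_{i,j}\cof_{ij}\,\bz_i\cdot(\bz_i-\bz_j),
\]
and symmetrize the double sum by swapping $i\leftrightarrow j$ in half of it. The symmetric part $\cof^s_{ij}:=(\cof_{ij}+\cof_{ji})/2\in[\lambda,\Lambda]$ collapses to the clean quadratic form $\frac{1}{2N^2}\sum_{i,j}\cof^s_{ij}|\bz_i-\bz_j|^2$, while the antisymmetric part contributes a lower-order correction $R$ whose magnitude is bounded by $\tfrac12(\Lambda-\lambda)W$. \emph{(iii) Discrete Poincar\'e on $K_N$:} the elementary identity $\sum_{i,j}|\bz_i-\bz_j|^2 = 2N^2 W$, valid because $\sum_i\bz_i=0$, combined with $\cof^s_{ij}\ge\lambda$, yields the lower bound $\frac{1}{N}\sum_i\bz_i\cdot S_i\ge\lambda W - \tfrac12(\Lambda-\lambda)W$. \emph{(iv) Cauchy-Schwarz and absorption:} the same pairing is bounded above by $\sqrt{WE}$, and combining with (iii) produces a quadratic inequality in $\sqrt{W}$ that solves to $W\le C(\lambda,\Lambda)\,E$ after squaring.

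The main obstacle is controlling the antisymmetric correction in step (ii) uniformly in the ratio $\Lambda/\lambda$. In the symmetric case directly relevant to the paper's application ---where $\cof_{ij}=\wU'(|\bx_i^\tau-\bx_j^\tau|)/|\bx_i^\tau-\bx_j^\tau|$ is manifestly symmetric--- this correction vanishes identically and the above argument yields the sharper bound $W\le E/\lambda^2$, which already implies the claim $C\lesssim\Lambda^2/\lambda^4$. For genuinely non-symmetric $\cof_{ij}$ with large $\Lambda/\lambda$, the absorption in step (iv) no longer closes on its own; one must instead iterate the estimate or directly invert the associated Laplacian-like operator $L=\mathrm{diag}(\cof_i)-\cof_{ij}/N$ on $\mathbf{1}^\perp$ via a Neumann series, which is presumably where the quadratic factor $(\Lambda/\lambda)^2$ in the claimed constant naturally arises.
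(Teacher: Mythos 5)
Your step (ii)--(iv) argument is correct and complete \emph{in the symmetric case}: when $\cof_{ij}=\cof_{ji}$ the pairing identity
\[
\frac{1}{N}\sum_i \bz_i\cdot S_i=\frac{1}{2N^2}\sum_{i,j}\cof_{ij}|\bz_i-\bz_j|^2\ \ge\ \lambda\,W,
\]
together with Cauchy--Schwarz $\frac{1}{N}\sum_i\bz_i\cdot S_i\le\sqrt{WE}$, immediately gives $W\le E/\lambda^2$, which is sharper than the stated $\Lambda^2/\lambda^4$ and covers every application in the paper (there $\cof_{ij}=\wU'(r^\tau_{ij})/r^\tau_{ij}$ is manifestly symmetric, both in the discrete setting of Theorem \ref{thm:thm2} and in the continuum version, Lemma \ref{lem:Mean}). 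This is a genuinely different and considerably shorter route than the paper's, and it is worth recording.

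However, the lemma as stated assumes only $\lambda\le\cof_{ij}\le\Lambda$ with no symmetry, and for that generality your proposal has a real gap which your own closing paragraph does not repair. The antisymmetric remainder is $R=\frac{1}{2N^2}\sum_{i,j}\cof^a_{ij}\big(|\bz_i|^2-|\bz_j|^2\big)$ with $|\cof^a_{ij}|\le(\Lambda-\lambda)/2$, so the best available bound is $|R|\le\frac{\Lambda-\lambda}{2}W$; this swamps the coercive term $\lambda W$ as soon as $\Lambda\ge3\lambda$, and no iteration of the same estimate improves matters. The Neumann-series suggestion fails for the same reason: writing $L=\lambda L_0+(L-\lambda L_0)$ with $L_0\bz=\bz-\widebar{\bz}$, the perturbation has coefficients in $[0,\Lambda-\lambda]$ and hence operator norm of order $\Lambda-\lambda$ on $\mathbf{1}^\perp$, so the series does not converge for large $\Lambda/\lambda$. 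The paper avoids spectral reasoning altogether: after reducing to scalars with zero mean and sorting $z_1\ge\cdots\ge z_N$, it locates the first index $i_0$ at which the running partial average $\frac{1}{N}\sum_{j<i}z_j$ exceeds $\frac{\lambda}{2(\Lambda-\lambda)}z_i$, shows the \emph{pointwise} lower bound $\frac{1}{N}\sum_j\cof_{ij}(z_i-z_j)\ge\frac{\lambda}{2}z_i$ for $i<i_0$ (using only the one-sided bounds $\cof_{ij}\le\Lambda$ on the terms with $z_i-z_j\le0$ and $\cof_{ij}\ge\lambda$ on the rest), and controls the tail $i\ge i_0$ by the head; no symmetry is used, and this is where the weaker constant $\Lambda^2/\lambda^4$ comes from. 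If you restrict the lemma to symmetric weights your proof stands as a cleaner alternative; to prove the statement as written you would need to replace steps (ii)--(iv) by an argument of this rearrangement type.
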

\begin{remark}[{\bf Why a lemma on means?}]\label{rem:why} The last bound \eqref{eq:means} implies --- and in fact is equivalent up to scaling, to the statement about the local means induced by  weights $\theta_{ij}$
\[
\frac{\lambda}{N} \leq \theta_{ij} \leq \frac{\Lambda}{N}, \qquad \sum_j \theta_{ij}=1.
\]
If $\displaystyle \widebar{\bz}_i(\theta):=\sum_j \theta_{ij}\bz_j$ are the local means,  then \eqref{eq:means} with $\cof_{ij}=N\theta_{ij}$ implies
\begin{equation}\label{eq:why}
\frac{1}{N}\sum_i |\bz_i-\widebar{\bz}|^2 \leq \frac{C(\lambda,\Lambda)}{N} \sum_i |\bz_i-{\widebar{\bz}}_i(\theta)|^2, \qquad  C(\Lambda,\lambda) \lesssim \frac{\Lambda^2}{\lambda^4}.
\end{equation}
Thus, the deviation from the local means is comparable to the deviation from the global mean.
\end{remark}

Applying \eqref{eq:means} to \eqref{eq:key} with the given bounds \eqref{eq:attractive},\eqref{eq:bound}, yields
\begin{equation}\label{eq:enspos}
\begin{split}
\frac{\rd}{\rd{t}}\cE(t)& =-\frac{\tau}{N}\sum_i|\dot{\bv}_i|^2 \\
& \lesssim -\frac{\tau}{N}\frac{A^2}{a^4}\Big(\max_{i,j} \braket{\bx^\tau_i-\bx^\tau_j}\Big)^{-4\beta} \sum_i |\bx^\tau_i|^2
\lesssim -\frac{\tau}{2N^2}\brt^{-\frac{2\beta}{1-\beta}} \sum_{i,j} |\bx^\tau_i-\bx^\tau_j|^2.
\end{split}
\end{equation}
Observe that in this case, the enstrophy of the anticipated energy is bounded by the fluctuations of the anticipated positions (compared with velocity fluctuations  in the `regular' energy decay \eqref{energy1}). We close the last  bound by   hypocoercivity argument carried out in section \ref{sec:attraction} which leads to the sub-exponential  decay \eqref{eq:Edecay1}. 

\subsection{Anticipation dynamics with attractive-repulsive potential}
For attractive-repulsive potentials, the large time behavior of \eqref{eq:AT} is significantly more complicated, due to the following two reasons:
\newline
$\bullet$ The topography of the total potential energy $\frac{1}{2N^2}\sum_{i,j} U(|\bx_i-\bx_j|)$ which includes  multiple local minima with different geometric configurations could be  very complicated, see e.g., \cite{LRC2000,DCBC2006,CDP2009,KSUB2011,BCLR2013,CHM2014,Se2015,CCP2017} and the references therein.\newline
$\bullet$ It is numerically observed in \cite{GTLW2017} that the decay of $E(t)$ is of order ${\mathcal O}(t^{-1})$. Therefore one cannot hope that a sub-exponential  energy dissipation rate $\dot{E}(t) \lesssim -\brt^{-\lamta}E(t)$ or its hypocoercivity counterpart to hold. 

Here we focus on the second difficulty, and give a first rigorous result in this direction.
\begin{theorem}[{\bf Anticipation with repulsive-attractive potential}]\label{thm:thm3}
Consider the  2D anticipated dynamics \eqref{eq:AT} of $N=2$ agents subject to   repulsive-attractive potential which has a  local minimum at $r=r_0>0$ where $\wU''(r_0) = a >0$.
Then there exists a constant $\epsilon>0$, such that if the initial data is small enough,
\begin{equation}\label{eq:small_enough}
\big||\bx_1(0)-\bx_2(0)|-r_0\big|^2 + |\bv_1(0)-\bv_2(0)|^2 \le \epsilon,
\end{equation}
then the solution to \eqref{eq:AT} satisfies  the following algebraic decay:
\begin{equation}
\big||\bx_1(t)-\bx_2(t)|-r_0\big| \le C\brt^{-1}\ln^{1/2}\braket{1+t},\quad |\bv_1(t)-\bv_2(t)| \le C\brt^{-1/2}.
\end{equation}
\end{theorem}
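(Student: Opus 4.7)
\emph{Reduction and three-dimensional system.} Using conservation of momentum \eqref{momentum}, place the center of mass at the origin and set $\br:=\bx_1-\bx_2$, $\bs:=\bv_1-\bv_2$; the $N=2$ dynamics reduce to the relative two-body system $\dot{\br}=\bs$, $\dot{\bs}=-U'(|\br^\tau|)\hat{\br}^\tau$, with $\br^\tau:=\br+\tau\bs$. The anticipated energy (up to an additive constant) and its dissipation read
\[
E(t) = \tfrac{1}{2}|\bs|^2 + \bigl[U(|\br^\tau|)-U(r_0)\bigr], \qquad \dot{E}(t) = -\tau\,U'(|\br^\tau|)^2.
\]
By the local strict convexity of $U$ near $r_0$ (where $U''(r_0)=a>0$) together with the smallness assumption \eqref{eq:small_enough}, the trajectory is trapped in a small neighborhood of the equilibrium circle $\{|\br|=r_0,\ \bs=0\}$ for all $t\geq 0$. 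Exploiting the planar rotational symmetry, introduce the anticipated radius $\rho^\tau:=|\br^\tau|$, the radial velocity $w:=\hat{\br}^\tau\cdot\bs$, and the angular momentum $L:=\br\wedge\bs\in\mathbb{R}$. Since $\br^\tau\wedge\bs=L$, the tangential velocity is $L/\rho^\tau$ and $|\bs|^2 = w^2 + L^2/(\rho^\tau)^2$; a direct computation yields the closed autonomous three-dimensional system
\[
\dot{\rho}^\tau = w - \tau U'(\rho^\tau), \qquad \dot{w} = -U'(\rho^\tau) + \frac{L^2}{(\rho^\tau)^3}, \qquad \dot{L} = -\frac{\tau U'(\rho^\tau)}{\rho^\tau}\,L,
\]
whose unique equilibrium inside the trap is $(r_0,0,0)$. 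Unlike the classical non-anticipated two-body problem, $L$ is \emph{not} conserved: its evolution is the key slow-dissipation mechanism.

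\emph{Slow-fast analysis and algebraic rates.} Linearization at $(r_0,0,0)$ decouples into (i) a damped radial oscillator in $(\rho^\tau-r_0,w)$ with damping rate $a\tau/2$ and frequency $\sqrt{a-a^2\tau^2/4}$, and (ii) a neutral angular mode $L$, whose decay is driven solely by quadratic nonlinearities in $\dot L$. Accordingly, the approximate slow manifold is parametrized by $L$,
\[
\rho^*(L) = r_0 + \tfrac{L^2}{a r_0^3} + O(L^4), \qquad w^*(L) = \tau\,U'(\rho^*(L)) = \tfrac{\tau L^2}{r_0^3} + O(L^4),
\]
on which the reduced equation for $L$ reads $\dot L \approx -\tau L^3/r_0^4$, giving the universal rate $|L(t)|\asymp\langle 1+t\rangle^{-1/2}$. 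Introducing the off-slow-manifold fluctuations $\xi:=\rho^\tau-\rho^*(L)$, $\eta:=w-w^*(L)$, consider the hypocoercive-type functional
\[
\mathcal{L}(t) := E(t) - \tfrac{L(t)^2}{2 r_0^2} + \alpha\,\xi(t)\,\eta(t), \qquad 0<\alpha\ll 1,
\]
which, for suitably tuned $\alpha$, dominates $\xi^2+\eta^2$. Computing $\dot{\mathcal{L}}$ along solutions (accounting for the drift of $\rho^*(L)$) and combining with the slow-scale identity $\dot{L^2}\approx -2\tau L^4/r_0^4$ closes into a pair of differential inequalities that integrate to
\[
L(t)^2 \leq \frac{C}{\langle 1+t\rangle}, \qquad \xi(t)^2+\eta(t)^2 \leq \frac{C\ln\langle 1+t\rangle}{\langle 1+t\rangle^{2}}.
\]
Translating back via $|\bs|^2=w^2+L^2/(\rho^\tau)^2$ and $|\br|^2 = (\rho^\tau)^2 - 2\tau\rho^\tau w + \tau^2|\bs|^2$, together with $\rho^*(L)-r_0\sim L^2$, yields the claimed bounds $|\bs(t)|\lesssim\langle 1+t\rangle^{-1/2}$ and $\bigl||\br(t)|-r_0\bigr|\lesssim\langle 1+t\rangle^{-1}\ln^{1/2}\langle 1+t\rangle$.

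\emph{Main obstacle.} The delicate step is closing the Lyapunov inequality for $\mathcal{L}$. As $L$ slowly decays, the effective equilibrium $\rho^*(L)$ drifts and \emph{continually re-excites} the fast radial oscillator; this adiabatic mismatch is precisely what prevents an exponential decay of $\xi,\eta$ and is the ultimate source of the logarithmic correction in the position estimate. Rigorously closing the estimates requires exploiting the $O(L^4)$-smallness of $\dot{\rho^*(L)}$ against the $O(a\tau)$ fast damping, and carefully handling the sign-indefinite cross term $\alpha\,\xi\eta$ so that the combined dissipation controls all of $(\xi,\eta,L)$ at the stated algebraic rates.
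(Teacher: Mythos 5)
Your reduction to the closed three--dimensional system in $(\rho^\tau,w,L)$ is correct and is exactly equivalent to the paper's polar system \eqref{eqkr} (with $w=v_r$ and $L=\rho^\tau v_\theta$, so that $\dot L=\dot r v_\theta+r\dot v_\theta=-\tau U'(r)L/r$), and your slow--fast picture identifies the right mechanism: $L$ decays like $\brt^{-1/2}$ through the cubic law $\dot L\approx-\tau L^3/r_0^4$, while the damped radial oscillator is continually re--forced by the drift of $\rho^*(L)$, which is the source of the logarithm. The genuine gap is precisely the step you defer as the ``main obstacle'': the coupled differential inequalities do not close at the rates you state. To extract $L^2\lesssim\brt^{-1}$ from the angular equation you must control the adiabatic error; since $U'(\rho^\tau)/\rho^\tau=L^2/r_0^4+(a/r_0)\xi+\dots$, one gets
\begin{equation*}
\frac{\rd}{\rd t}L^2\ \le\ -\frac{2\tau}{r_0^4}L^4+\frac{2\tau a}{r_0}\,|\xi|\,L^2,
\end{equation*}
and with your claimed bound $|\xi|\lesssim\brt^{-1}\ln^{1/2}\brt$ the coefficient $|\xi|$ is \emph{not} integrable in time. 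The comparison ODE $\dot M=-cM^2+C\brt^{-2}\ln^{1/2}\brt$ then has solutions of size $\brt^{-1}\ln^{1/4}\brt$, not $\brt^{-1}$, so the bootstrap between $L^2$ and $\xi^2+\eta^2$ is internally inconsistent: either you must prove the sharper (integrable) bound $|\xi|\lesssim e^{-ct}+\brt^{-2}$ coming from the forced response of the damped oscillator, or you must exploit the sign cancellation of the oscillating $\xi$ against $L^2$ --- neither of which is carried out, and the sign--indefinite term $\alpha\xi\eta$ in $\mathcal L$ does not by itself resolve this.

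The paper sidesteps this circularity entirely by obtaining the $O(\brt^{-1})$ decay of the full anticipated energy \emph{first}, in one global step: the modified energy $\widehat{\cE}=\cE-cv_rv_\theta^2-cU'(r)v_r$ satisfies $\frac{\rd}{\rd t}\widehat{\cE}\le-\lamta_1(\kr^2+v_\theta^4+v_r^2)\le-\lamta\widehat{\cE}^2$, because the cross term $-cv_rv_\theta^2$ manufactures the quartic angular dissipation $-cv_\theta^4/r$ directly and all error terms are absorbed by Cauchy--Schwarz; this yields $v_\theta\lesssim\brt^{-1/2}$ (hence your $L$--bound) with no reference to a slow manifold and no need for an a priori bound on $\xi$. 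Only then does a second, purely radial modified energy $\widecheck{\cE}=U(r)+\frac12v_r^2-c_1U'(r)v_r$, which decays exponentially up to the forcing $Cv_\theta^4\lesssim\brt^{-2}$, deliver $|\kr|+|v_r|\lesssim\brt^{-1}\ln^{1/2}\brt$ by Duhamel. If you wish to keep the slow--manifold framework, you should restructure it the same way: first prove a global inequality of the form $\frac{\rd}{\rd t}\mathcal L\le-\lamta\mathcal L^2$ for a functional comparable to $E$ (note your $\mathcal L$ is only comparable to $\xi^2+\eta^2$ up to $O(L^4)$ remainders, so it cannot serve this purpose as written), and only afterwards run the fast--variable estimate with the $L$--decay already in hand.
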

The proof, based on  nonlinear hypocoercivity argument for the anticipated energy is carried out in section \ref{sec:ra}.
\begin{remark}
The detailed description of the dynamics outlined in the proof, reveals that the radial component of the velocity, $v_r\lesssim \brt^{-1}\ln^{1/2}\braket{1+t}$, decays faster than its tangential part, $v_\theta \lesssim \brt^{-1/2}$. Therefore, although the dynamics of \eqref{eq2}  can be complicated at the beginning, it will finally settles as a circulation around the equilibrium, provided  the initial data is small enough.
\end{remark}

\subsection{Anticipation hydrodynamics} The large crowd (hydro-)dynamics associated with  \eqref{eq:AT} is described by density and momentum $(\rho,\rho\bu)$ governed by\footnote{Under a simplifying assumption of a  mono-kinetic closure.}   
\begin{equation}\label{eq:hydro}
\left\{\begin{split}
 \rho_t + \nabla_\bx\cdot (\rho \bu) &= 0 \\
 (\rho\bu)_t + \nabla_\bx\cdot (\rho\bu\otimes \bu) & =   - \int \nabla U(|\bx^\tau - \by^\tau|) \rd\rho(\by), \quad \bx^\tau := \bx + \tau \bu(t,\bx).
\end{split}\right.
\end{equation}
The large-time flocking behavior of \eqref{eq:hydro} is studied in terms of lemma \ref{lem:Mean} --- a continuum version of the discrete lemma  of means proved in section \ref{sec:means}. That is, we obtain a sub-linear time bound on the spread of $\text{supp}\,\rho(t,\cdot)$, which in turn is used to control the enstrophy of the anticipated energy. In section \ref{sec:hydro} we outline the proof of our last main result, which states that \emph{if} \eqref{eq:hydro} admits a global smooth solution then such smooth solution must flock, in agreement with the general paradigm for Cucker-Smale dynamics discussed in \cite{TT2014,HeT2017}.

\begin{theorem}[{\bf Anticipation hydrodynamics: smooth solutions must flock}]\label{thm:thm4}
Let $(\rho,\bu)$ be a smooth solution of the anticipation hydrodynamics \eqref{eq:hydro} with  an attractive potential subject to  a fat tail decay, \eqref{eq:attractive}, of order $\beta<\frac{1}{3}$. Then  there is  sub-exponential decay of the anticipated energy fluctuations
\begin{equation}\label{eq:Edecay2}
\int\int \left(\frac{1}{2m_0}|\bu(\bx)-\widebar{\bu}_0|^2+  U(|\bx^\tau-\by^\tau|)\right) \rd\rho(\bx)\rd\rho(\by)\le Ce^{-t^{1-\lamta}}, \quad \lamta=\frac{2\beta}{1-\beta}<1.
\end{equation}
It follows that there is large time flocking, with sub-exponential alignment
\[
|\bu(t,\bx)-\widebar{\bu}_0|^2\rd{\rho(\bx)} \stackrel{t \rightarrow \infty}{\longrightarrow} 0, \qquad \widebar{\bu}_0=\frac{1}{m_0}\int (\rho\bu)_0(\bx) \rd{\bx}, \ \ m_0=\int \rho_0(\bx)\rd{\bx}.
 \]
\end{theorem}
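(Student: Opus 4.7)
The plan is to lift the discrete argument of Theorem \ref{thm:thm2} to the continuum setting by working along the Lagrangian characteristics $\bx(t,\alpha)$ of the smooth solution. Since $(\rho,\bu)$ is assumed smooth, the characteristic flow $\dot{\bx}=\bu(t,\bx)$ is well defined, and the material derivative $\dot{\bu}:=\bu_t+\bu\cdot\nabla\bu$ along it satisfies
\[
\dot{\bu}(t,\bx) =-\frac{1}{m_0}\int c(\bx,\by)(\bx^\tau-\by^\tau)\rd\rho(\by),\qquad c(\bx,\by):=\frac{\wU'(|\bx^\tau-\by^\tau|)}{|\bx^\tau-\by^\tau|}.
\]
This is the exact continuum analog of the discrete acceleration, with $\frac{1}{N}\sum$ replaced by $\frac{1}{m_0}\int\cdot\rd\rho$. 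Total mass and total momentum $m_0\widebar{\bu}_0=\int\rho\bu\rd\bx$ are conserved, so $\widebar{\bu}_0$ is constant in time, and pairing the momentum equation with $\bu$ together with $\partial_t\bx^\tau=\bu+\tau\dot{\bu}$ and symmetry of $U$ yields the continuum version of \eqref{energy},
\[
\frac{\rd}{\rd t}\cE(t)=-\frac{\tau}{m_0}\int |\dot{\bu}|^2 \rd\rho(\bx),\qquad \frac{\rd}{\rd t}\delcE(t)=\frac{\rd}{\rd t}\cE(t).
\]

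The second step is an \emph{a priori} bound on the spread of the anticipated support. Applying the argument of lemma \ref{lem:convex} along characteristics, using the kinetic control coming from $\cE(t)\leq\cE(0)$ and $\wU'(r)/r\leq A$, produces the continuum counterpart of \eqref{eq:bound}: there exists $C_\infty$ so that
\[
\sup_{\bx\in\mathrm{supp}\,\rho(t,\cdot)}|\bx^\tau(t,\bx)-\widebar{\bx}(t)|\leq C_\infty\brt^{1/(2-2\beta)}.
\]
Combined with the fat-tail assumption \eqref{eq:attractive}, this forces the weight in the display above to satisfy $\phi_-\brt^{-\beta/(1-\beta)}\lesssim c(\bx,\by)\leq A$ uniformly in the support of $\rho\otimes\rho$.

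Step three is to convert the enstrophy $\int|\dot{\bu}|^2\rd\rho$ into a quantitative control of fluctuations of anticipated positions. Here we invoke the continuum lemma \ref{lem:Mean} with weights $c(\bx,\by)$; by the bounds just established, its constant deteriorates only algebraically in $t$, giving
\[
\frac{\rd}{\rd t}\cE(t)\lesssim -\frac{\tau}{m_0^2}\brt^{-2\beta/(1-\beta)} \int\!\!\int |\bx^\tau-\by^\tau|^2 \rd\rho(\bx)\rd\rho(\by).
\]
This is the continuum analog of \eqref{eq:enspos}: the dissipation controls only the anticipated potential part of $\delcE$, not its kinetic part.

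The final and main obstacle is closing the loop by hypocoercivity, since the energy dissipation controls only the position fluctuations. Mimicking the scheme carried out in section \ref{sec:attraction}, we introduce a Lyapunov functional formed by $\delcE(t)$ augmented by a small cross-term coupling position and velocity fluctuations, integrated along characteristics against $\rho$. The cross-term produces a negative contribution proportional to the kinetic fluctuation $\int|\bu-\widebar{\bu}_0|^2\rd\rho$, modulo lower-order terms absorbed by the position dissipation obtained in step three and the mass conservation $\partial_t\rho+\nabla\cdot(\rho\bu)=0$ used to transport averages. A Gr\"onwall inequality with sub-polynomial coefficient $\brt^{-2\beta/(1-\beta)}$ then produces the stretched-exponential bound \eqref{eq:Edecay2}, and large-time alignment follows from $\delcE(t)\to 0$ together with coercivity of $U$ near the origin. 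The delicate point throughout is that every step relies on the uniform spread bound of step two, whose validity in turn requires the smoothness of $(\rho,\bu)$ to keep the Lagrangian framework intact --- consistent with the theorem's standing assumption that a global smooth solution exists.
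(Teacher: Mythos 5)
Your proposal is correct and follows essentially the same route as the paper: the continuum energy--enstrophy balance, the sub-linear spread bound on anticipated positions, the continuum lemma of means (Lemma \ref{lem:Mean}) to convert enstrophy into anticipated-position fluctuations, and a hypocoercivity closure with the cross term $\epsilon(t)\int \bx^\tau\cdot\bu\,\rd\rho$. The only slip is cosmetic: the spread bound $\brt^{1/(2-2\beta)}$ comes from the argument of Lemma \ref{lem:bound} (anticipated positions), not Lemma \ref{lem:convex}.
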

In proposition \ref{prop:1Dexist}  we verify the existence of global smooth solution (and hence flocking) of  the 1D  system, \eqref{eq:hydro},  provided 
the threshold condition, $u'_0(x) \geq -C(\tau,m_0,a)$ holds, for a proper negative constant depending on $\tau, m_0$ and the minimal convexity $a=\min U''>0$.

%%%%%%%%%%%%%%%%%%%%%%%%%%%%
\section{A priori $L^\infty$ bounds for confining potentials}\label{sec:uniform}
%%%%%%%%%%%%%%%%%%%%%%%%%%%
In this section we prove the uniform bounds asserted in \eqref{eq:bound1} and \eqref{eq:bound}, corresponding to the anticipation dynamics  in \eqref{eq:UF} and, respectively, \eqref{eq:AT}. Due to the momentum conservation \eqref{momentum}, we may assume without loss of generality that in both cases $\widebar{\bx}(t)=\widebar{\bv}(t)\equiv0$. This will always be assumed in the rest of this paper.

We recall  the dynamics of \eqref{eq:UF} assumes that $U$ lies in the class of convex potentials, \eqref{eq:convex},  and the dynamics of \eqref{eq:AT} assumes a larger class of attractive potentials, \eqref{eq:attractive}. In fact, here we prove uniform bounds under a more general setup of \emph{confining  potentials}.
 \begin{assumption}[{\bf Confining potentials}]\label{ass:confining}
 There exists  constant $a>0, L\geq0$ such that 
 \begin{equation}\label{eq:confining}
\wU(r) \geq a\left(\langle r\rangle^{2-\beta}-L\right), \qquad 0\leq \beta\leq 1.
\end{equation}
 \end{assumption}
 \noindent
Observe that in particular, attractive potentials are confining\footnote{Thus, we have the increasing hierarchy of three classes of potentials --- convex, attractive and confining potentials.}, 
\begin{equation}\label{eq:attconf}
\wU(r)=\int_0^r \wU'(s)\rd{s} \geq \int_0^r a\langle s\rangle^{-\beta}s\rd{s}=
\frac{a}{2-\beta}\left(\langle r\rangle^{2-\beta}-1\right).
\end{equation}
The class of confining potentials is much larger, however, and it includes repulsive-attractive potentials (discussed in section \ref{sec:ra} below). In particular, a confining $U$ needs not be positive.

 \begin{lemma}[{\bf Uniform bound on positions for \ref{eq:UF} system}]\label{lem:convex}
Consider the anticipation dynamics \eqref{eq:UF} with bounded positive communication matrix $0\leq \Phi\leq \phi_+$, and bounded confining   potential \eqref{eq:bounded},\eqref{eq:confining}.
Then the solution $\{(\bx_i(t),\bv_i(t))\}$ satisfies the a priori estimate
\begin{equation}\label{eq:cX1}
 \max_i |\bx_i(t)|  \le C_\infty\brt^{\frac{2}{4-3\beta}}, \quad \max_i|\bv_i(t)| \leq C_\infty\brt^{\frac{2-\beta}{4-3\beta}}, \qquad  0\leq \beta \leq 1.
\end{equation}
\end{lemma}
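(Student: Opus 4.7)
The proof combines the monotonicity of $E(t)$ (from the hypothesis $\Phi\geq 0$) with an interpolation between two moments of the positions.

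First, since $\Phi_{ij}\geq 0$, the dissipation identity \eqref{energy1} gives $\dot E\leq 0$ and hence $E(t)\leq E(0)$. Dropping the non-negative kinetic term and invoking the confinement $U(r)\geq a(\langle r\rangle^{2-\beta}-L)$, this yields the two time-uniform averaged bounds
\[
\frac{1}{N}\sum_i|\bv_i|^2\leq C_0,\qquad \frac{1}{N^2}\sum_{i,j}\langle r_{ij}\rangle^{2-\beta}\leq C_1,
\]
where $C_0,C_1$ depend only on $E(0), a, L$.

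Second, I would turn the pairwise bound into a per-particle bound. Since $\bar\bx(t)\equiv 0$ and the map $\bx\mapsto|\bx_i-\bx|^{2-\beta}$ is convex (as $2-\beta\geq 1$), Jensen's inequality yields $|\bx_i|^{2-\beta}\leq \tfrac{1}{N}\sum_j|\bx_i-\bx_j|^{2-\beta}$, and after summing in $i$,
\[
M_{2-\beta}(t):=\frac{1}{N}\sum_i|\bx_i|^{2-\beta}\leq C_1 \quad \text{uniformly in } t.
\]
Independently, for $M_2(t):=\tfrac{1}{N}\sum_i|\bx_i|^2$, Cauchy--Schwarz on $\dot M_2=\tfrac{2}{N}\sum\bx_i\cdot\bv_i$ combined with the kinetic-energy bound gives $\dot M_2\leq 2\sqrt{C_0 M_2}$, which integrates to $M_2(t)\leq C\langle t\rangle^2$.

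Third, set $R(t):=\max_i|\bx_i|$ and use the elementary $R^{2-\beta}\leq NM_{2-\beta}\leq NC_1$ together with $R^2\leq NM_2\leq CN\langle t\rangle^2$; multiplying these,
\[
R^{4-3\beta}=R^{2-\beta}\cdot R^2 \;\lesssim\; \langle t\rangle^2,
\]
which is precisely $R(t)\leq C_\infty\langle t\rangle^{2/(4-3\beta)}$. For the velocity bound, $V(t):=\max_i|\bv_i|$ satisfies the uniform bound $V^2\leq NC_0$ from the kinetic-energy control; since $\langle t\rangle^{(2-\beta)/(4-3\beta)}\geq 1$ this majorizes trivially as $V(t)\leq C_\infty\langle t\rangle^{(2-\beta)/(4-3\beta)}$.

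The main obstacle I anticipate is not any single inequality but the \emph{interpolation}: the confining moment $M_{2-\beta}$ is time-independent, while the second moment $M_2$ grows quadratically in $t$ (as is inherent to Cauchy--Schwarz), and their product balances through the exponent $4-3\beta$ to yield exactly the sub-linear (for $\beta<2/3$) bound of the lemma. The improved Remark~\ref{rem:optimal} bound $R\lesssim\langle t\rangle$ amounts to replacing the Cauchy--Schwarz $L^2$-control of $\bv$ by a genuine $L^1$ estimate under the stronger assumption $U'(r)\lesssim\langle r\rangle^{1-\beta}$.
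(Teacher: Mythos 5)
Your route is genuinely different from the paper's: you use monotonicity of the \emph{global} energy $E(t)$ (from $\Phi\geq 0$ in \eqref{energy1}) to get the time-uniform averaged bounds, Jensen to recenter, Cauchy--Schwarz to get $M_2(t)\lesssim \brt^2$, and then the interpolation $R^{4-3\beta}=R^{2-\beta}\cdot R^2$. Each of these steps is correct. The paper instead tracks the \emph{per-particle} energy $E_i(t)=\frac{1}{2}|\bv_i|^2+\frac{1}{N}\sum_j U(|\bx_i-\bx_j|)$, derives a differential inequality $\frac{\rd}{\rd t}E_\infty\leq C_1+C_2E_\infty^{\alpha}$ with $\alpha=\frac{\beta}{4-2\beta}<\frac12$ for $E_\infty=\max_iE_i$ (this is where $\Phi\leq\phi_+$ and $|U''|\leq A$ enter, neither of which you use), integrates to $E_\infty\lesssim\brt^{2(2-\beta)/(4-3\beta)}$, and reads off both estimates of \eqref{eq:cX1} from $\langle\bx_i\rangle^{2-\beta}\lesssim E_i/a+L$ and $|\bv_i|^2\leq 2E_i+\mathrm{const}$.

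The substantive gap is that your constants depend on $N$. Passing from averaged moments to the maximum costs $R^{2-\beta}\leq N\,M_{2-\beta}$ and $R^{2}\leq N\,M_{2}$, so you actually prove $\max_i|\bx_i(t)|\lesssim N^{2/(4-3\beta)}\brt^{2/(4-3\beta)}$ and $\max_i|\bv_i(t)|\leq\sqrt{N C_0}$, whereas the paper's argument yields a $C_\infty$ depending only on $E(0),a,A,L,\phi_+,\beta$. This uniformity in $N$ is not cosmetic: the lemma's output feeds the lower bounds $\phi_-(t),\psi_-(t)$ in the hypocoercivity argument for Theorem \ref{thm:thm1} (so an $N$-dependent $C_\infty$ degrades the decay rate as $N$ grows), and the same estimate must survive the continuum limit in Section \ref{sec:hydro} (cf.\ \eqref{eq:Spread}); the paper's footnote on $E_i$ makes clear that the per-particle energy is designed precisely to avoid this loss. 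Moreover, the loss is intrinsic to your approach rather than a fixable slack: the global energy only controls the average over $i$ of the per-particle potential $\frac1N\sum_j\langle r_{ij}\rangle^{2-\beta}$, and a single outlier particle can carry the entire budget, which is exactly what the maximum-principle argument on $E_\infty$ rules out. (As a side remark, your velocity bound is uniform in time, which is stronger in $t$ than the paper's $\brt^{(2-\beta)/(4-3\beta)}$ --- but again only at the price of the $\sqrt{N}$ factor.)
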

\begin{remark} Note that we require a positive communication matrix $\Phi$ but otherwise, we do not insist on any fat tail condition \eqref{eq:positive}.
\end{remark}
\begin{proof}
 Our proof is based on the technique introduced in \cite[\S2.2]{ST2019}, in which we prove uniform bounds in terms of the \emph{particle energy}\footnote{In fact $E_i$ is not a proper particle energy, since $\sum_i E_i \ne N E$ (the pairwise potential is counted twice). However, it is  the ratio of the kinetic energy and potential energy in (\ref{cEi}) which is essential, as one would like to eliminate all the positive terms with indices $i$ in (\ref{cEi}), in order to avoid exponential growth of $E_i$.}
\begin{equation}\label{cEi}
E_i(t) := \frac{1}{2}|\bv_i|^2 + \frac{1}{N}\sum_j U(|\bx_i-\bx_j|).
\end{equation}
We start by relating the local energy to the position of particle $i$: using \eqref{eq:confining}
followed by Jensen inequality for the convex mapping\footnote{\label{fn:jensen}$(\brr^{2-\beta})'' = -\beta(2-\beta) r^2\brr^{-2-\beta} + (2-\beta)\brr^{-\beta} = (2-\beta)\big((1-\beta) r^2 + 1\big)\brr^{-2-\beta}>0$ for $\beta\leq 1$.} $\bx\mapsto \langle \bx \rangle^{2-\beta}$, we find
\[
\frac{E_i(t)}{a} \ge  \frac{1}{N}\sum_j \left(\langle \bx_i-\bx_j\rangle^{2-\beta}-L\right) 
\ge  \big\langle \frac{1}{N}\sum_j (\bx_i-\bx_j) \big\rangle^{2-\beta}-L =  \langle \bx_i\rangle^{2-\beta}-L
\]
It follows that the maximal spread of positions, $\cX(t):=\max_i|\bx_i(t)|$, does not exceed
\begin{equation}\label{cF1}
\cX(t) \le \left(\frac{E_\infty(t)}{a}+L\right)^{\frac{1}{2-\beta}}, \qquad \cX(t)=\max_i |\bx_i(t)|, \quad  E_\infty(t)= \max_i E_i(t).
\end{equation}
Next we bound the energy dissipation rate of each particle. By \eqref{eq:positive} the communication matrices $\Phij$ are non-negative and bounded\footnote{Observe that we do not use the fat tail decay \eqref{eq:positive}.}, $0\leq \Phij\leq \phi_+$,  and since $\sum_j \bv_j=0$, 
\begin{equation}
\begin{split}
\frac{\rd}{\rd{t}}E_i(t) = & \bv_i\cdot\left(-\frac{1}{N}\sum_j \nabla U(|\bx_i-\bx_j|)  + \frac{1}{N}\sum_j \Phij(\bv_j-\bv_i)\right) \\
 \quad & + \frac{1}{N}\sum_j \nabla U(|\bx_i-\bx_j|)\cdot(\bv_i-\bv_j) \\
= & \frac{1}{N}\sum_j \Phij(\bv_j-\bv_i)\cdot\bv_i - \frac{1}{N}\sum_j \nabla U(|\bx_i-\bx_j|)\cdot\bv_j \\
= & -\frac{1}{2N}\sum_j \Phij\bv_i\cdot\bv_i-\frac{1}{2N}\sum_j \Phij(\bv_j-\bv_i)\cdot(\bv_j-\bv_i) \\
& + \frac{1}{2N}\sum_j \Phij\bv_j\cdot\bv_j- \frac{1}{N}\sum_j \Big(\nabla U(|\bx_i-\bx_j|)-\nabla U(|\bx_i|)\Big)\cdot\bv_j \\
\le & {\phi_+ E(0)} + \sqrt{2E(0)}\left(\frac{1}{N}\sum_j \big|\nabla U(|\bx_i-\bx_j|)-\nabla U(|\bx_i|)\big|^2\right)^{1/2}. 
\end{split}
\end{equation}
To bound the sum on the right, we use the fact that $D^2U$ is bounded, \eqref{eq:bounded}, followed by \eqref{cF1} to find, 
\begin{equation}\label{xitau2}
\begin{split}
 \frac{1}{N}\sum_{j} |&\nabla U(|\bx_i-\bx_j|)-\nabla U(|\bx_i|)|^2 \\
  & \le  \sup_{\bx}|D^2U(|\bx|)|^2 \frac{1}{N}\sum_{j} |\bx_j|^2 
\le   \frac{A^2}{N}\sum_{j} |\bx_j|^2 
\\
& =  \frac{A^2}{2N^2}\sum_{i,j} |\bx_i-\bx_j|^2 
\le 
 2^\beta A^2\max_i|\bx_i|^\beta\times \frac{1}{N^2}\sum_{i,j} |\bx_i-\bx_j|^{2-\beta}
\\
& \le   2^{\beta} A^2 \cX^{\beta} \frac{1}{2N^2}\sum_{i,j} |\bx_i-\bx_j|^{2-\beta} 
\le  2^{\beta}A^2 \cX^{\beta} \frac{1}{2N^2}\sum_{i,j} \left(\frac{U(|\bx_i-\bx_j|)}{a}+L\right) \\
& \le  2^{\beta}A^2 \cX^{\beta}\left(\frac{E(0)}{a}+\frac{L}{2}\right).
\end{split}
\end{equation}

Therefore
\[
\begin{split}
\frac{\rd}{\rd{t}}E_i(t) \le &{\phi_+ E(0)} + \sqrt{2E(0)}\left(2^{\beta}A^2\cX^{\beta}\Big(\frac{E(0)}{a}+\frac{L}{2}\Big)\right)^{1/2} \\
\le & {\phi_+ E(0)} + \sqrt{2E(0)}\left(2^{\beta}A^2\Big(\frac{E_\infty}{a}+L\Big)^{\frac{\beta}{2-\beta}}\Big(\frac{E(0)}{a}+\frac{L}{2}\Big)\right)^{1/2} \\
\end{split}
\]
and taking maximum among all $i$'s,
\begin{equation}
\begin{split}
\frac{\rd}{\rd{t}}E_\infty(t) \le  & {\phi_+ E(0)} + \sqrt{2E(0)}\left(2^{\beta}A^2\Big(\frac{E_\infty(t)}{a}+L\Big)^{\frac{\beta}{2-\beta}}\Big(\frac{E(0)}{a}+\frac{L}{2}\Big)\right)^{1/2}.
\end{split}
\end{equation}
Set $f(t) := E_\infty(t) + aL$, then the last inequality tells us 
$f' \le C_1 + C_2 f^\alpha$ with $\alpha:=\frac{\beta}{(4-2\beta)}$, and
since by assumption $\alpha<\nicefrac{1}{2}$, then
\[
f \lesssim \brt^{\frac{1}{1-\alpha}} = C \brt^{\frac{2(2-\beta)}{4-3\beta}},
\]
 which implies the uniform bound on velocities in \eqref{eq:cX1}, 
\[
\max_i|\bv_i(t)| \leq 2\sqrt{E_\infty(t)+aL} \lesssim \brt^{\frac{2-\beta}{4-3\beta}}.
\]
The uniform bound on positions, $\max_i|\bx_i(t)|$, follows in view of  \eqref{cF1}.
\end{proof}

Lemma \ref{lem:convex} applies, in particular, to the anticipation dynamics \eqref{eq:AT} with convex potential, so that $D^2U$ is positive definite. Next, we prove uniform bounds for more general confining $U$'s.

\begin{lemma}[{\bf Uniform bound on anticipated positions}]\label{lem:bound}
Consider the anticipation dynamics \eqref{eq:AT} with bounded confining potential \eqref{eq:bounded},\eqref{eq:confining}.
Then the solution of the anticipation dynamics  \eqref{eq:AT} satisfies the a priori estimate
\begin{equation}\label{eq:cX}
 \max_i |\bx_i^\tau(t)|  \le C_\infty\brt^{\frac{1}{2-2\beta}}, \qquad  0\leq \beta < 1.
\end{equation}
\end{lemma}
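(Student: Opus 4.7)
The plan is to mirror the proof of Lemma \ref{lem:convex}, with the particle-energy $E_i$ replaced by the \emph{anticipated} particle energy
$$\cE_i(t) := \tfrac{1}{2}|\bv_i|^2 + \tfrac{1}{N}\sum_j U(|\bx_i^\tau-\bx_j^\tau|),$$
and with the monotone dissipation $\cE(t)\le\cE(0)$ of the total anticipated energy from \eqref{energy} replacing conservation of the instantaneous $E$ (which, as noted in the remark following Theorem \ref{thm:thm2}, need not hold here). Writing $\cE_\infty:=\max_i\cE_i$ and $\cXtau(t):=\max_i|\bx_i^\tau(t)|$, and using $\widebar{\bx^\tau}=\widebar{\bx}+\tau\widebar{\bv}=0$, I discard the kinetic part of $\cE_i$, apply \eqref{eq:confining} pointwise, and invoke Jensen's inequality for the convex map $\bz\mapsto\langle\bz\rangle^{2-\beta}$ to obtain
$$\langle\bx_i^\tau(t)\rangle^{2-\beta}\le \cE_i(t)/a+L,\qquad \cXtau(t)\le \bigl(\cE_\infty(t)/a+L\bigr)^{1/(2-\beta)};$$
so the task reduces to controlling the growth of $\cE_\infty$.

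Differentiating $\cE_i$ along \eqref{eq:AT} with $\dot{\bx}_i^\tau=\bv_i+\tau\dot{\bv}_i$ and $\dot{\bv}_i=-\tfrac{1}{N}\sum_j\nabla U(|\bx_i^\tau-\bx_j^\tau|)$, the $\bv_i\cdot\dot{\bv}_i$ terms cancel and one finds
$$\frac{\rd}{\rd t}\cE_i = -\tau|\dot{\bv}_i|^2-G_i-\tau H_i,\quad G_i:=\tfrac{1}{N}\sum_j\nabla U(|\bx_i^\tau-\bx_j^\tau|)\cdot\bv_j,\quad H_i:=\tfrac{1}{N}\sum_j\nabla U(|\bx_i^\tau-\bx_j^\tau|)\cdot\dot{\bv}_j.$$
Dropping the nonpositive term and using momentum conservation \eqref{momentum} in \emph{both} of its forms, $\sum_j\bv_j=0$ and (by time differentiation) $\sum_j\dot{\bv}_j=0$, I center each sum against the $j$-independent vector $\nabla U(|\bx_i^\tau|)$:
$$G_i=\tfrac{1}{N}\sum_j[\nabla U(|\bx_i^\tau-\bx_j^\tau|)-\nabla U(|\bx_i^\tau|)]\cdot\bv_j,\ \ H_i=\tfrac{1}{N}\sum_j[\nabla U(|\bx_i^\tau-\bx_j^\tau|)-\nabla U(|\bx_i^\tau|)]\cdot\dot{\bv}_j.$$
Cauchy-Schwarz plus $|D^2U|\le A$ extracts the common prefactor $A\bigl(\tfrac{1}{N}\sum_j|\bx_j^\tau|^2\bigr)^{1/2}$; since $\widebar{\bx^\tau}=0$ this equals $A\bigl(\tfrac{1}{2N^2}\sum_{j,k}|\bx_j^\tau-\bx_k^\tau|^2\bigr)^{1/2}$, which by the interpolation $|\cdot|^2\le(2\cXtau)^\beta|\cdot|^{2-\beta}$, the confining bound \eqref{eq:confining}, and $\cE(t)\le\cE(0)$ is $\lesssim(\cXtau)^{\beta/2}$. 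Paired with $\tfrac{1}{N}\sum_j|\bv_j|^2\lesssim 1$ (from $\cE(t)\le\cE(0)$ and $U\ge -aL$) this yields $|G_i|\lesssim(\cXtau)^{\beta/2}$; paired with the Jensen-plus-$|\nabla U(r)|\le Ar$ estimate $\tfrac{1}{N}\sum_j|\dot{\bv}_j|^2\le\tfrac{A^2}{N^2}\sum_{j,k}|\bx_j^\tau-\bx_k^\tau|^2\lesssim(\cXtau)^\beta$, it yields $|H_i|\lesssim(\cXtau)^\beta$.

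Taking the maximum over $i$ (Dini derivative), substituting the position-by-energy bound from the first paragraph, and setting $f:=\cE_\infty+aL$, everything reduces to the scalar ODE
$$f'(t)\le C_1+C_3\,f(t)^{\alpha},\qquad \alpha=\tfrac{\beta}{2-\beta}.$$
The hypothesis $\beta<1$ is exactly $\alpha<1$, so integration gives $f(t)\lesssim\brt^{1/(1-\alpha)}=\brt^{(2-\beta)/(2-2\beta)}$, whence $\cXtau(t)\lesssim\brt^{1/(2-2\beta)}$, which is \eqref{eq:cX}. The delicate step is the $\tau H_i$ contribution: a crude estimate $|\dot{\bv}_j|\le A\cXtau$ would produce $\tau(\cXtau)^2$ and destroy $\alpha<1$ for every $\beta>0$. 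The point is that $\sum_j\dot{\bv}_j=0$ is available from momentum conservation, so $H_i$ admits the same centering trick used for $G_i$; this exchanges one factor of $\cXtau$ for the weighted factor $|\bx_j^\tau|$ under the sum, and the interpolation--confining chain then collapses the bound to $(\cXtau)^\beta$ rather than $(\cXtau)^2$, which is precisely what preserves $\alpha<1$.
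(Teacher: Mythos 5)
Your proposal is correct and follows the paper's overall strategy: the anticipated particle energy $\cE_i$, the confining bound $\cXtau\le(\cE_\infty/a+L)^{1/(2-\beta)}$, the cancellation $\bv_i\cdot\dot\bv_i+(\bv_i+\tau\dot\bv_i)\cdot\frac1N\sum_j\nabla U=-\tau|\dot\bv_i|^2$, the centering against $\nabla U(|\bx_i^\tau|)$ via momentum conservation, the interpolation $|\cdot|^2\le(2\cXtau)^\beta|\cdot|^{2-\beta}$, and the Gronwall closure $f'\le C_1+C_3f^\alpha$ with $\alpha=\beta/(2-\beta)<1$. The one place you genuinely diverge is exactly the step you flag as delicate, the $\tau H_i$ contribution. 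The paper does \emph{not} estimate $\frac1N\sum_j|\dot\bv_j|^2$ directly; instead it keeps the whole term $\frac1N\sum_j(\nabla U(|\bx_i^\tau-\bx_j^\tau|)-\nabla U(|\bx_i^\tau|))\cdot(\bv_j+\tau\dot\bv_j)$, applies Cauchy--Schwarz with a free parameter $c$, and then \emph{adds the total anticipated energy} $\cE(t)$ to $\cE_i(t)$ so that the resulting $\frac{c\tau^2}{N}\sum_j|\dot\bv_j|^2$ is absorbed by the global enstrophy $-\frac{\tau}{N}\sum_j|\dot\bv_j|^2$ from \eqref{energy} upon choosing $c=1/\tau$; the Gronwall variable is then $f=\cE(t)+\cE_\infty(t)+aL$. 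You instead bound the enstrophy outright, $\frac1N\sum_j|\dot\bv_j|^2\le\frac{A^2}{N^2}\sum_{j,k}|\bx_j^\tau-\bx_k^\tau|^2\lesssim\cXtau^\beta$, using $|\nabla U(r)|\le Ar$ and the same interpolation--confining chain, which yields $|H_i|\lesssim\cXtau^\beta$ and hence the same exponent $\alpha=\beta/(2-\beta)$. Your route is self-contained (it only uses the monotonicity $\cE(t)\le\cE(0)$, not the precise enstrophy identity) and avoids the coupling trick, at no cost to the final rate; the paper's route is slightly sharper in spirit in that it never converts $|\dot\bv_j|$ into positions, but for this lemma both land on the identical ODE and the identical bound \eqref{eq:cX}.
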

\begin{remark}
The a priori bound \eqref{eq:cX} is weaker than lemma \ref{lem:convex} and may not be optimal for $\beta$ close to 1. We do not pursue an improved bound  since it does not provide an increased range of $\beta$'s for which Theorem \ref{thm:thm2} holds. 
\end{remark}
\begin{proof}[Proof of Lemma \ref{lem:bound}]
The key quantity for proving the priori bound \eqref{eq:cX}
is the `anticipated particle energy' in \eqref{eq:AT},
\begin{equation}\label{Ei}
\cE_i(t) := \frac{1}{2}|\bv_i|^2 + \frac{1}{N}\sum_j U(|\bx_i^\tau-\bx_j^\tau|).
\end{equation}
Similar to the previous proof, the confining property of $U$ implies that the diameter of anticipated positions,  $\cXtau(t):=\max_i |\bx_i^\tau(t)|$, does not exceed
\begin{equation}\label{F1}
 \cXtau(t) \le \left(\frac{\cE_\infty(t)}{a}+L\right)^{\frac{1}{2-\beta}}, \qquad \cXtau(t):=\max_i |\bx^\tau_i(t)|, \quad  \cE_\infty(t)= \max_i \cE_i(t). 
\end{equation}
Next we bound the energy dissipation rate of each particle: since $\sum_j (\bv_j+\tau\dot{\bv}_j)=0$, 
\begin{equation}\label{dEi}
\begin{split}
\frac{\rd}{\rd{t}}\cE_i(t) = & \ \bv_i\cdot \dot{\bv}_i + \frac{1}{N}\sum_{j} \nabla U(|\bx_i^\tau-\bx_j^\tau|)\cdot (\bv_i + \tau \dot{\bv}_i - \bv_j - \tau \dot{\bv}_j) \\
= & -\tau |\dot{\bv_i}|^2 - \frac{1}{N}\sum_{j} \nabla U(|\bx_i^\tau-\bx_j^\tau|)\cdot(\bv_j+ \tau \dot{\bv}_j) \\
= & -\tau |\dot{\bv_i}|^2 - \frac{1}{N}\sum_{j} \big(\nabla U(|\bx_i^\tau-\bx_j^\tau|)-\nabla U(|\bx_i^\tau|)\big)\cdot(\bv_j+ \tau \dot{\bv}_j).
\end{split}
\end{equation}
As before, the boundedness of $D^2U$ followed by \eqref{F1} to find  imply
\begin{equation}\label{xitau2}
\frac{1}{N}\sum_{j} \big|\nabla U(|\bx_i^\tau-\bx_j^\tau|)-\nabla U(|\bx_i^\tau|)\big|^2 \le   2^{\beta}A^2 \cXtau^{\beta}\left(\frac{\cE(0)}{a}+\frac{L}{2}\right).
\end{equation}

Inserting \eqref{xitau2} into the RHS of \eqref{dEi} and adding the energy-enstrophy  balance \eqref{energy} we find\footnote{Note that a confining potential need not be positive yet $U\geq -aL$ and hence  $1/2N\sum_j |\bv_j|^2 \le \cE(0)+aL$.}
\[
\begin{split}
\frac{\rd}{\rd{t}}&(\cE(t) +  \cE_i(t)) \\
 & \le  - \frac{\tau}{N}\sum_j |\dot{\bv}_j|^2 -  \tau|\dot{\bv}_i|^2 +  \frac{c}{N}\sum_j |\bv_j|^2 + \frac{c\tau^2}{N}\sum_j |\dot{\bv}_j|^2 \\
  & \ \ \ \ +  \frac{1}{4cN}\sum_{j} \big|\nabla U(|\bx_i^\tau-\bx_j^\tau|)-\nabla U(|\bx_i^\tau|)\big|^2 \\
& \le  - \frac{\tau(1- c \tau)}{N}\sum_j |\dot{\bv}_j|^2 -  \tau|\dot{\bv}_i|^2 + 2 c \big(\cE(0) + aL\big)  +  \frac{1}{4c}2^{\beta}A^2 \cXtau^{\beta}\left(\frac{\cE(0)}{a}+\frac{L}{2}\right) \\
& \le    \frac{2}{\tau} \big(\cE(0) +aL\big) +   \frac{\tau}{4}2^{\beta}A^2 \cXtau^{\beta}\left(\frac{\cE(0)}{a}+\frac{L}{2}\right), \qquad \mbox{(taking $c=1/\tau$)}.
\end{split}
\]
 By taking maximum among all $i$,
\[
\begin{split}
\frac{\rd}{\rd{t}}(\cE(t) +  \cE_\infty(t)) \le &   \frac{2}{\tau} \big(\cE(0) +aL\big)  +   \frac{\tau}{4}2^{\beta}A^2 \cXtau^{\beta}\left(\frac{\cE(0)}{a}+\frac{L}{2}\right) \\
\le &   \frac{2}{\tau} \big(\cE(0) +aL\big)  +  \frac{\tau}{4}2^{\beta}A^2\left(\frac{\cE_\infty(t)}{a}+L\right)^{\frac{\beta}{2-\beta}}\left(\frac{\cE(0)}{a}+\frac{L}{2}\right) \\
\end{split}
\]
The last inequality tells us that $f(t) := \cE(t) +  \cE_\infty(t) + aL$ satisfies
$f' \le C_1 + C_2 f^\alpha$ with $\alpha:=\frac{\beta}{2-\beta}$. Since by assumption $\alpha<1$, then
\[
f \lesssim \brt^{\frac{1}{1-\alpha}} = C \brt^{\frac{2-\beta}{2-2\beta}},
\]
and the uniform bound \eqref{eq:cX}  follows in view of \eqref{F1}.
\end{proof}

%%%%%%%%%%%%%%%%%%%%%%%%%%%%%%
\section{Anticipation with convex potentials and positive communication kernels}\label{sec:convex}
%%%%%%%%%%%%%%%%%%%%%%%%%%%%%%%%
Equipped with the uniform bound \eqref{eq:cX}, we turn to  prove Theorem \ref{thm:thm1} by hypocoercivity argument. In \cite{ST2019} we use  hypocoercivity to prove the flocking  with \emph{quadratic potentials}. Here, we make a judicious use of the assumed fat tail conditions, \eqref{eq:positive},\eqref{eq:convex}, to extend these arguments for general  convex potentials.

\begin{proof}[Proof of Theorem \ref{thm:thm1}]
We introduce the \emph{modified energy}, $\widehat{E}(t)$, by adding a multiple of the cross term $\nicefrac{1}{N}\sum_i \bx_i\cdot\bv_i$, 
\[
\widehat{E}(t):=E(t) + \frac{\epsilon(t)}{N}\sum_i\bx_i(t)\cdot\bv_i(t).
\]
We claim that with a proper choice $\epsilon(t)$, the modified energy  is positive definite. Indeed,
 the convex (hence attractive) potential satisfies the pointwise bound \eqref{eq:attconf}, and together with the uniform bound \eqref{eq:bound1}  they imply
\[
\begin{split}
|\epsilon(t)\frac{1}{N}\sum_i \bx_i \cdot\bv_i| \le & \frac{1}{4N}\sum_i |\bv_i|^2 +  \frac{\epsilon^2(t)}{N}\sum_i |\bx_i|^2 
= \frac{1}{4N}\sum_i |\bv_i|^2 +  \frac{\epsilon^2(t)}{2N^2}\sum_{i,j} |\bx_i-\bx_j|^2
\\
\le & \frac{1}{4N}\sum_i |\bv_i|^2 + \epsilon^2(t) (2\cX(t))^{\beta}\frac{1}{2N^2}\sum_{i,j} \left(\frac{U(|\bx_i-\bx_j|)}{a}+1\right) \\
\le & \frac{1}{4N}\sum_i |\bv_i|^2 + \epsilon^2(t)(2C_\infty)^{\beta} \brt^{\frac{2\beta}{4-3\beta}}\frac{1}{2N^2}\sum_{i,j} \left(\frac{U(|\bx_i-\bx_j|)}{a}+1\right).
\end{split}
\]
Therefore it suffices to choose 
\begin{equation}\label{epsilon2}
\epsilon(t) = \epsilon_0\brt^{-\alpha}, \qquad  \ \alpha > \frac{\beta}{4-3\beta},
\end{equation}
with  small enough $\epsilon_0>0$  and \emph{any} $\alpha > \frac{\beta}{4-3\beta}$ which is to be determined later, to guarantee 
$|\nicefrac{\epsilon(t)}{N}\sum_i\bx_i\cdot\bv_i| \le {E(t)}/{2}$, hence the positivity of $\widehat{E}(t)\geq E(t)/2>0$.

Next, we turn to verify the coercivity of $\widehat{E}(t)$. 
First notice that Lemma \ref{lem:convex} implies the following $L^\infty$ bound on $\bx_i^{\tau_{ij}}$:
\[%begin{equation}
|\bx_i^{\tau_{ij}}| \le |\bx_i| + \tau|\bv_i| \le (1+\tau)C_\infty\brt^{\frac{2}{4-3\beta}}
\]%end{equation}
This together with the assumed fat-tails of  $\Phi$ and $D^2U$, imply their lower-bounds: by \eqref{eq:positive} $\Phij$ are bounded from below by,
\begin{equation}\label{eq:phiminus}
\Phij\geq \phi_-(t)  := c\brt^{-\frac{2\gamma}{4-3\beta}},
\end{equation}
and integrating \eqref{eq:convex} $U''(r) \geq a\langle r\rangle^{-\beta}$ twice, implies $U$ has the lower bound \eqref{eq:attconf}
\begin{equation}\label{psi}
U(|\bx_i-\bx_j|) \geq c|\bx_i-\bx_j|^2 \langle |\bx_i-\bx_j| \rangle^{-\beta}
\geq |\bx_i-\bx_j|^2\psi_-(t), \quad \psi_-(t):=c\brt^{-\frac{2\beta}{4-3\beta}}.
\end{equation}

Now, we turn to conduct hypocoercivity argument based on the energy estimate \eqref{energy1}. To this end, we append to $E(t)$, a proper multiple of the cross term $\sum\bx_i\cdot\bv_i$, consult e.g., \cite{ST2019,DS2019}. Using the symmetry of $\Phij$, the  time derivative of this   cross term is given by
\begin{equation}\label{eq:cross}
\begin{split}
 \frac{\rd}{\rd{t}}\frac{1}{N}\sum_i &\bx_i\cdot\bv_i \\
= & \ \frac{1}{N}\sum_i |\bv_i|^2 + \frac{1}{N}\sum_i \bx_i\cdot\left( -\frac{1}{N}\sum_j \nabla U(|\bx_i-\bx_j|) +  \frac{1}{N}\sum_j \Phij(\bv_j-\bv_i) \right) \\
= & \ \frac{1}{2N^2}\sum_{i,j} |\bv_i-\bv_j|^2 - \frac{1}{2N^2}\sum_{i,j} (\bx_i-\bx_j)\cdot \nabla U(|\bx_i-\bx_j|) \\
& +  \ \frac{1}{2N^2}\sum_{i,j}  \Phij(\bv_j-\bv_i)\cdot(\bx_i-\bx_j).
\end{split}
\end{equation}
We prepare the following three bounds. Noticing that since $U$ is convex $U'(r)$ is increasing, hence $\displaystyle U(r) = \int_0^r U'(s) \rd{s} \leq rU'(r)$ implies
\begin{subequations}\label{eqs:cross}
\begin{equation}\label{eq:cross1}
\begin{split}
\frac{1}{2N^2}\sum_{i,j} (\bx_i-\bx_j)\cdot \nabla U(|\bx_i-\bx_j|) 
&= \frac{1}{2N^2}\sum_{i,j}U'(|\bx_i-\bx_j|) |\bx_i-\bx_j|  \\
& \ge  \frac{1}{2N^2}\sum_{i,j} U(|\bx_i-\bx_j|). 
\end{split}
\end{equation}
Using the weighted Cauchy-Schwarz twice --- weighted by the positive definite $0 < \Phij\le \phi_+$, and then by the yet-to-be determined $\kapta(t)>0$,
\begin{equation}\label{eq:cross2}
\begin{split}
\left|  \frac{1}{2N^2}\sum_{i,j}\right. &  \left.\Phij(\bv_j-\bv_i)\cdot(\bx_i-\bx_j) \right| \\ & \le  \frac{\kapta}{4N^2}\sum_{i,j} \Phij(\bv_i-\bv_j)\cdot(\bv_i-\bv_j) + \frac{1}{4\kapta N^2}\sum_{i,j} \Phij(\bx_i-\bx_j)\cdot(\bx_i-\bx_j)\\
& \le  \frac{\kapta(t)}{4N^2}\sum_{i,j} \Phij(\bv_i-\bv_j)\cdot(\bv_i-\bv_j) + \frac{\phi_+}{4\kapta(t) N^2}\sum_{i,j} |\bx_i-\bx_j|^2
\end{split}
\end{equation}
Recall that with the choice of $\epsilon(t)=\epsilon_0\brt^{-\alpha}$ in \eqref{epsilon2}, we have 
$\displaystyle  |\dot{\epsilon}(t)| \le \alpha \frac{\epsilon(t)}{\brt}$. We have the final bound
\begin{equation}\label{eq:cross3}
\begin{split}
\Big|\frac{\dot{\epsilon}(t)}{N}\sum_i\bx_i\cdot \bv_i \Big|
& \leq  \left|\dot{\epsilon}(t)\right|\frac{1}{2\delta(t) N^2}\sum_{i,j} |\bx_i|^2
+ \left|\dot{\epsilon}(t)\right| \frac{\delta(t)}{2N^2}\sum_{i,j} |\bv_i|^2\\
 & \leq \frac{\alpha }{2\delta(t) \brt}\frac{\epsilon(t)}{2N^2}\sum_{i,j} |\bx_i-\bx_j|^2 + \frac{\alpha \delta(t)}{2 \brt}\frac{\epsilon(t)}{2N^2}\sum_{i,j} |\bv_i-\bv_j|^2 
\end{split}
\end{equation}
\end{subequations}
Adding \eqref{eq:cross}  to the energy decay \eqref{energy1} we find that 
the dissipation rate of the modified energy $\widehat{E}(t):=E(t) + \nicefrac{\epsilon(t)}{N}\sum_i\bx_i(t)\cdot\bv_i(t))$ does not exceed, in view of   \eqref{eqs:cross}
\begin{equation}\label{eq:dissipative}
\begin{split}
\frac{\rd}{\rd{t}}\widehat{E}(t) 
\le & \left( -\tau+\frac{\kapta(t)}{2}\epsilon(t)\right)\frac{1}{2N^2}\sum_i \Phij(\bv_i-\bv_j)\cdot(\bv_i-\bv_j) \\
& \ \ + \left(\frac{\phi_+}{2\kapta(t)}\epsilon(t) + \frac{\alpha}{2\delta(t)\brt}\epsilon(t)\right)\frac{1}{2N^2}\sum_{i,j} |\bx_i-\bx_j|^2\\
 & \ \ \ \ + \left( \epsilon(t) +\frac{\alpha\delta(t)}{2\brt}\epsilon(t)\right)\frac{1}{2N^2}\sum_{i,j} |\bv_i-\bv_j|^2\\
 & \ \ \ \ \ \ -\epsilon(t)\frac{1}{2N^2}\sum_{i,j} U(|\bx_i-\bx_j|) \\
& =: I + II + III +IV.
\end{split}
\end{equation}
To complete the (hypo-)coercivity argument,  we  guarantee the   terms on the right of \eqref{eq:dissipative} are negative. To this end, set $\kapta(t)=\tau/\epsilon(t)$ so the first pre-factor$\leq -\nicefrac{\tau}{2}$ hence
\[
I \leq -\frac{\tau}{2}\phi_-(t) \frac{1}{2N^2}\sum_{i,j} |\bv_i-\bv_j|^2=-\frac{\tau}{2}\phi_-(t) \frac{1}{N}\sum_i |\bv_i|^2, \qquad \kapta(t)=\frac{\tau}{\epsilon(t)}.
\]
Next, we set $\displaystyle \delta(t)=\frac{\delta_0}{\epsilon(t)\brt}$ so that the  second pre-factor $\displaystyle\leq \left(\frac{\phi_+}{\tau}+\frac{\alpha}{2\delta_0}\right)\epsilon^2(t)$, hence the second term does not exceed, in view of \eqref{psi}
\[
II \leq  \left(\frac{\phi_+}{\tau}+\frac{\alpha}{2\delta_0}\right)\frac{\epsilon^2(t)}{\psi_-(t)} \frac{1}{2N^2}\sum_{i,j} U(|\bx_i-\bx_j|), \qquad \delta(t)=\frac{\delta_0}{\epsilon(t)\brt}.
\]
With these choices of $\kapta$ and $\delta$, the third term does not exceed
\[
III \le   \left( \epsilon(t) +\frac{\alpha\delta_0}{2\brt^2}\right)\frac{1}{2N^2}\sum_{i,j} |\bv_i-\bv_j|^2 =\left( \epsilon(t) +\frac{\alpha\delta_0}{2\brt^2}\right)\frac{1}{N}\sum_{i} |\bv_i|^2
\]
We conclude
\begin{equation}\label{eq:hypo}
\begin{split}
\frac{\rd}{\rd{t}}\widehat{E}(t) & \le
\left(-\frac{\tau}{2}\phi_-(t)+\epsilon(t)+\frac{\alpha\delta_0}{2\brt^2}\right) \frac{1}{N} \sum_i |\bv_i|^2\\
& \ \ +\left(- \epsilon(t) +\Big(\frac{\phi_+}{\tau}+\frac{\alpha}{2\delta_0}\Big)\frac{\epsilon^2(t)}{\psi_-(t)}\right)  \frac{1}{2N^2}\sum_{i,j} U(|\bx_i-\bx_j|).
\end{split}
\end{equation}
Now set $\displaystyle\alpha\geq \frac{2\gamma}{4-3\beta}$ so that $\phi_-(t)$ decays no faster than  $\epsilon(t)$; moreover, $\phi_-(t)$ decays no faster than  $\brt^{-2}$ since $6\beta+2\gamma \leq 8$, and hence, with small enough $\epsilon_0,\delta_0>0$, the first pre-factor on the right of \eqref{eq:hypo} does not exceed $-{\tau}\phi_-(t)/4$. Next, let $\displaystyle\alpha\geq \frac{2\beta}{4-3\beta}$ so that 
$\epsilon(t)/\psi_-(t)$ is bounded: hence, with small enough $\epsilon_0 \ll \delta_0$, the second pre-factor on the right of \eqref{eq:hypo} does not exceed $-\epsilon(t)/2$. We conclude
\[
\frac{\rd}{\rd{t}}\widehat{E}(t) \lesssim 
-\frac{\phi_-(t)}{N}\sum_i |\bv_i|^2
-\frac{\epsilon(t)}{2N^2}\sum_{i,j} U(|\bx_i-\bx_j|)
\lesssim -\brt^{-\lamta} \widehat{E}(t), \quad {\lamta}=\frac{2\max\{\beta,\gamma\}}{4-3\beta}.
\]
This implies the sub-exponential decay of $\widehat{E}$, and thus that of the comparable $E$.
\end{proof}

\subsection{Flocking of matrix-based Cucker-Smale dynamics}\label{subsec:CS}

The Cucker-Smale  model  \cite{CS2007a,CS2007b},
\begin{equation}\label{eq:CS}
\left\{\begin{split}
& \dot{\bx}_i=\bv_i \\
& \dot{\bv}_i = \frac{\tau}{N}\sum_{j=1}^N \Phij(\bv_j-\bv_i),  
\end{split}\right.
\end{equation}
is a special case of \eqref{eq:UF} with no external potential  $U=0$, which formally corresponds to $\beta=0$, in which case theorem \ref{thm:thm1} would yield  flocking for $\gamma <\nicefrac{1}{2}$. Here we justify these formalities and prove the velocity alignment  of \eqref{eq:CS} (no spatial concentration effect, however), under a slightly larger threshold.
\begin{proposition}[{\bf Alignment of} \eqref{eq:CS}  {\bf model with positive kernels}]\label{prop:CS}
Consider the Cucker-Smale dynamics \eqref{eq:CS} with symmetric  matrix kernel $\Phi$ satisfying 
\[
\phi_-\braket{\bx_i-\bx_j}^{-\gamma} \leq \Phi(\bx_i,\bx_j) \leq \phi_+, \qquad 0\leq \gamma < \nicefrac{2}{3}.
\]
Then there is  sub-exponential decay of the  energy fluctuations
\begin{equation}\label{eq:CSfracdecay}
\delE(t) \le Ce^{-t^{1-\lamta}}, \qquad \lamta=\frac{3\gamma}{2}, \qquad \delE(t):= \frac{1}{2N}\sum_{i} |\bv_i-\widebar{\bv}|^2.
\end{equation}
It follows that there is a flock formation around the mean $\widebar{\bx}(t)$ with large time velocity alignment at sub-exponential rate:
\begin{equation}
\bv_i(t)\rightarrow \widebar{\bv}_0,\quad \bx_i(t) - \widebar{\bx}(t) \rightarrow \bx_i^\infty, \qquad \widebar{\bx}(t):=\widebar{\bx}_0+t\widebar{\bv}_0,
\end{equation}
for some constants $\bx_i^\infty$.
\end{proposition}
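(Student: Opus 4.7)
The plan is to exploit the vanishing of the potential. With $U\equiv 0$ the energy dissipation identity \eqref{energy1} already has a sign, so all that is needed is a coarse a priori bound on the diameter of the flock in order to apply the matrix fat-tail hypothesis and close by a direct Gr\"onwall estimate; no hypocoercivity machinery is required.

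First I would note that $\delE(t) = \frac{1}{2N}\sum_i |\bv_i - \widebar{\bv}_0|^2$ is nonincreasing, since \eqref{energy1} with $U\equiv 0$ reads $\frac{\rd}{\rd{t}}\delE(t) = -\frac{\tau}{2N^2}\sum_{i,j}(\bv_i - \bv_j)^\top \Phij(\bv_i - \bv_j) \leq 0$. In particular $|\bv_i(t) - \widebar{\bv}_0| \leq M := \sqrt{2N\delE(0)}$ uniformly in $t$. Integrating $\frac{\rd}{\rd{t}}(\bx_i - \widebar{\bx}) = \bv_i - \widebar{\bv}_0$ yields a coarse linear-in-$t$ bound $\max_{i,j}|\bx_i(t) - \bx_j(t)| \leq D_0\brt$ for some $D_0>0$ depending only on the initial data. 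Feeding this back into the fat-tail hypothesis on $\Phi$ produces the matrix inequality $\Phij(t) \geq c\brt^{-\gamma}\,{\mathbb I}$, and since $\frac{1}{2N^2}\sum_{i,j}|\bv_i - \bv_j|^2 = 2\delE(t)$ the dissipation identity becomes
\begin{equation*}
\frac{\rd}{\rd{t}}\delE(t) \leq -2\tau c\,\brt^{-\gamma}\,\delE(t).
\end{equation*}
Gr\"onwall integration then gives $\delE(t) \leq \delE(0)\exp\bigl(-2\tau c\int_0^t\brs^{-\gamma}\rd{s}\bigr) \leq C\exp(-c' t^{1-\gamma})$ for all $\gamma<1$. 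Since $1-\gamma \geq 1 - \nicefrac{3\gamma}{2}$, this implies the stated estimate \eqref{eq:CSfracdecay} with $\lamta = \nicefrac{3\gamma}{2}$ whenever $\gamma < \nicefrac{2}{3}$.

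Flocking is then a one-line bootstrap. The estimate $|\bv_i(t) - \widebar{\bv}_0| \leq C\exp(-c' t^{1-\gamma}/2)$ is integrable on $[0,\infty)$ provided $\gamma<1$, so $\bx_i(t) - \widebar{\bx}(t) = \bx_i(0) - \widebar{\bx}_0 + \int_0^t(\bv_i - \widebar{\bv}_0)\rd{s}$ converges to some finite limit $\bx_i^\infty$, which is the asserted flock formation around the mean. The one structural point I would flag is that the modified-energy/hypocoercivity trick from the proof of theorem \ref{thm:thm1} is \emph{not} available here: with $U\equiv 0$ there is no coercive spatial term that could absorb a cross-term such as $\epsilon(t)\sum\bx_i\cdot\bv_i$, which is precisely why one obtains only velocity alignment and no spatial concentration. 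Fortunately this is not really an obstacle, because the velocity enstrophy is already the only nontrivial quantity to control, and the direct Gr\"onwall route closes the loop---in fact yielding the sharper decay exponent $1-\gamma$ rather than the stated $1-\nicefrac{3\gamma}{2}$.
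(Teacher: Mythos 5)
Your argument is internally consistent for a \emph{fixed} number of agents $N$, but it has a quantitative gap that explains why your exponent ($1-\gamma$) disagrees with the stated one ($1-\nicefrac{3\gamma}{2}$): every constant you produce degrades with $N$. The uniform velocity bound you extract from the monotonicity of $\delE$ is $|\bv_i(t)-\widebar{\bv}_0|\le\sqrt{2N\,\delE(0)}$, which for generic initial data is of size $\sqrt{N}$. Hence your diameter bound is $\max_{i,j}|\bx_i-\bx_j|\lesssim \sqrt{N}\,\brt$, the lower bound on the kernel becomes $\Phij\gtrsim \phi_- N^{-\gamma/2}\brt^{-\gamma}$, and the Gr\"onwall rate constant $c'$ in $\delE(t)\le C e^{-c't^{1-\gamma}}$ vanishes like $N^{-\gamma/2}$ as $N\to\infty$. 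The same $\sqrt{N}$ loss reappears in your final bootstrap, where you pass from $\delE(t)$ to individual velocities via $|\bv_i-\widebar{\bv}_0|^2\le 2N\,\delE(t)$. The proposition (like the rest of the paper, which is written in mean-field scaling and passed to a hydrodynamic limit) is meant to hold with constants independent of $N$, and indeed the specific threshold $\gamma<\nicefrac{2}{3}$ and rate $\lamta=\nicefrac{3\gamma}{2}$ are exactly the fingerprints of the $N$-uniform argument; they cannot be recovered from your route.

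The paper avoids this by working with the \emph{particle} energy $E_i=\frac{1}{2}|\bv_i|^2$ rather than the averaged fluctuation energy: a direct computation gives $\frac{\rd}{\rd t}E_\infty\le -\phi_-(t)E_\infty+\phi_+E(0)$ with $E_\infty=\max_iE_i$, hence $\max_i|\bv_i|\lesssim\sqrt{t}$ with constants independent of $N$, whence $\cX(t)\lesssim\brt^{3/2}$ and $\phi_-(t)\gtrsim\brt^{-3\gamma/2}$. Feeding this into the dissipation identity gives the stated decay, and a Duhamel bootstrap on $E_\infty$ then yields $N$-independent decay of each $|\bv_i-\widebar{\bv}_0|$, which is what makes the integral $\int_0^t(\bv_i-\widebar{\bv}_0)\rd s$ converge uniformly in $N$. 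So the fix is not cosmetic: you need the worse-looking $t^{3/2}$ diameter bound (obtained from a maximum-principle argument on the single-particle kinetic energy) precisely because it is the one that is uniform in $N$; the price is the larger exponent $\nicefrac{3\gamma}{2}$ and the restriction $\gamma<\nicefrac{2}{3}$. Your closing remark that no hypocoercivity is needed is correct and agrees with the paper --- the issue is solely the $N$-dependence of your a priori bounds.
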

The proof  is similar but follows a slightly different strategy from that of theorem \ref{thm:thm1}: we start by a priori estimate for the particle energy $E_i$, and then proceed to controlling the position $\braket{\bx_i}$, which in turn gives enough energy dissipation.

\begin{proof}
Define the particle energy
\begin{equation}
E_i(t) := \frac{1}{2}|\bv_i|^2,\quad E_\infty(t) = \max_i E_i(t).
\end{equation}
Observe that it satisfies 
\begin{equation}
\begin{split}
\frac{\rd}{\rd{t}} E_i(t) = & \frac{1}{N}\sum_{j=1}^N  \Phij(\bv_j-\bv_i)\cdot \bv_i  \\
= & -\frac{1}{2N}\sum_{j=1}^N  \Phij(\bv_j-\bv_i)\cdot (\bv_j-\bv_i)-\frac{1}{2N}\sum_{j=1}^N \ \Phij\bv_i\cdot \bv_i+ \frac{1}{2N}\sum_{j=1}^N  \Phij\bv_j\cdot \bv_j\\
\le & -\phi_-(t)\frac{|\bv_i|^2}{2} + \phi_+E(t),
\end{split}
\end{equation}
where $\phi_-(t)$, the lower-bound  of the symmetric $\Phi(\bx_i,\bx_j)$ which is given, in view of  \eqref{eq:positive},
\begin{equation}
\phi_-(t) = a 2^{-\gamma}\cX^{-\gamma}(t) ,\quad \cX(t) := \max_i |\bx_i(t)|.
\end{equation}
Taking $i$ as the particle with the largest $E_i$, then 
\begin{equation}\label{Einfty_1}
\frac{\rd}{\rd{t}} E_\infty(t)  \le -\phi_-(t)E_\infty(t) + \phi_+E(t) \le -\phi_-(t)E_\infty(t) + \phi_+E(0).
\end{equation}
This  implies
\begin{equation}
E_\infty(t) \le E_\infty(0)+\phi_+E(0)t. 
\end{equation}

Next, we notice that
\begin{equation}
\frac{\rd}{\rd{t}} \cX(t) \le \max_i|\bv_i| \le \sqrt{2E_\infty(t)} \le \sqrt{2(E_\infty(0)+\phi_+E(0)t)}. 
\end{equation}
This yields $\cX(t) \le C\brt^{3/2}$, and in view of the fat tail \eqref{eq:positive}, we end with  the lower bound
\[
\phi_-(t) \ge c\brt^{-\frac{3\gamma}{2}}.
\]
Therefore the energy dissipation \eqref{energy1} gives
\begin{equation}
\frac{\rd}{\rd{t}} E(t) \le -\phi_-(t) E(t) \le -c\brt^{-\frac{3\gamma}{2}}E(t),
\end{equation}
which implies the sub-exponential decay \eqref{eq:CSfracdecay},
$\displaystyle 
E(t) \le E(0)e^{-c\brt^{1-{\lamta}}}$ with ${\lamta}=\frac{3\gamma}{2}<1$.

 Equipped with this sub-exponential decay of $E(t)$, we revisit \eqref{Einfty_1}: this time it implies
\begin{equation}
\begin{split}
E_\infty(t) &\le  e^{-\Phi_-(t)} E_\infty(0) + \phi_+\int_0^t e^{-\Phi_-(t-s)} E(s) \rd{s}\\
 & \le  C\brt e^{-c\brt^{1-{\lamta}}}, \qquad  \Phi_-(t) := \int_0^t \phi_-(s)\rd{s} \ge c\brt^{1-{\lamta}}
\end{split}
\end{equation}
This shows the sub-exponential decay of the kinetic energy of each agent, $E_\infty(t)$, independent oh $N$, $|\bv_i(t)-\widebar{\bv}_0| \rightarrow 0$. As a result,
$\ds \bx_i(t) = \bx_i(0) + \int_0^t \bv_i(s)\rd{s}$, 
converges as $t\rightarrow\infty$ since the last integral converges absolutely in view of $|\bv_i(t)| \le \sqrt{2E_\infty(t)}$.
\end{proof}

%%%%%%%%%%%%%%%%%%%%%%%%
\section{Local vs. global weighted means}\label{sec:means}
%%%%%%%%%%%%%%%%%%%%%%
In this section we prove  lemma \ref{lem:mean}   about discrete  means, which in turn will be used in proving the  hypocoercivity  of  the discrete  anticipation dynamics \eqref{eq:AT}. We also treat the corresponding continuum lemma of means in lemma \ref{lem:Mean}, which is utilized in the  hypocoercivity of the   hydrodynamic anticipation model \eqref{eq:hydro}. 

We begin with the proof of the Lemma of means \ref{lem:mean}:
\begin{proof}[Proof of Lemma \ref{lem:mean}]
We first treat the scalar setup, in which case we may assume, without loss of generality that  the $z_i$'s are rearranged in a decreasing order,  $z_1\ge z_2 \ge \cdots \ge z_N$, and have a zero mean $\sum_j z_j=0$. 
Let $i_0$ be the smallest index $i$ such that 
\begin{equation}\label{eq:igti0}
\frac{1}{N}\sum_{j=1}^{i-1} z_j \ge \frac{\lambda}{2(\Lambda-\lambda)}z_i.
\end{equation}
Noticing that if $i_+$ is the maximal index of  the positive entries, $z_{i\leq i_+}\geq0$, then \eqref{eq:igti0} clearly holds for $i >  i_+$ (where LHS $> 0 >$ RHS), hence $i_0\leq i_+$, and since  LHS is increasing (for $i \leq i_+$) and RHS is decreasing,  see figure \ref{fig:graph} below, \eqref{eq:igti0} holds for all $i\geq i_0$
\begin{equation}\label{eq:ave2}
\frac{1}{N}\sum_{j=1}^{i-1} z_j \ge \frac{\lambda}{2(\Lambda-\lambda)}z_i, \quad i\geq i_0.
\end{equation}
For $i<i_0$ we have  $z_i\geq0$, hence
\begin{equation}\label{eq:ave1}
\begin{split}
\frac{1}{N}\sum_j \cof_{ij}(z_i-z_j) &=  \frac{1}{N}\sum_{j=1}^{i} \cof_{ij}(z_i-z_j) + \frac{1}{N}\sum_{j=i+1}^{N} \cof_{ij}(z_i-z_j)   \\
 &\ge   \frac{\Lambda}{N}\sum_{j=1}^{i} (z_i-z_j) + \frac{\lambda}{N}\sum_{j=i+1}^{N} (z_i-z_j) \\
 & = -\frac{\Lambda}{N}\sum_{j=1}^{i} z_j + \frac{\lambda}{N}\sum_{j=1}^{i} z_j +\frac{\Lambda}{N}\sum_{j=1}^{i} z_i + \frac{\lambda}{N}\sum_{j=i+1}^{N} z_i \\
 &\ge  -\frac{\Lambda-\lambda}{N}\sum_{j=1}^{i-1} z_j + \lambda z_i, \qquad i<i_0,
\end{split}
\end{equation}
and therefore, by the minimality of $i_0$ in \eqref{eq:ave2}
\[
\frac{1}{N}\sum_j \cof_{ij}(z_i-z_j) \geq -\frac{\Lambda-\lambda}{2(\Lambda-\lambda)}\lambda z_i +\lambda z_i = \frac{\lambda}{2}z_i \geq 0, \qquad i < i_0.
\]
It follows that
\begin{equation}\label{eq:ave3}
\frac{1}{N} \sum_{i=1}^{i_0-1} \Big|\frac{1}{N}\sum_j \cof_{ij}(z_i-z_j)\Big|^2 \ge \frac{\lambda^2}{4}\frac{1}{N}\sum_{i=1}^{i_0-1} z_i^2.
\end{equation}
Else, for $i\geq i_0$, \eqref{eq:ave2} implies
\[
z_i\leq z_{i_0} \le \frac{2(\Lambda-\lambda)}{\lambda}\frac{1}{N}\sum_{j=1}^{i_0-1} z_j, \quad i\geq i_0. 
\]
It follows that for all positive entries, $0\leq z_i\leq z_{i_0}$,
\begin{equation}\label{eq:ave4}
\frac{1}{N}\sum_{i=i_0}^{i_+}z_i^2 \le z^2_{i_0} \leq \frac{4(\Lambda-\lambda)^2}{\lambda^2}\frac{1}{N^2}\left(\sum_{j=1}^{i_0-1} z_j\right)^2 \leq \frac{4(\Lambda-\lambda)^2}{\lambda^2}\frac{1}{N}\sum_{j=1}^{i_0-1} z_j^2
\end{equation}
Therefore, by \eqref{eq:ave4},\eqref{eq:ave3},
\begin{equation}\label{eq:pos}
\begin{split}
\frac{1}{N}\sum_{z_i\ge 0}z_i^2 &= \frac{1}{N}\sum_{i=1}^{i_0-1}z_i^2
+\frac{1}{N}\sum_{i=i_0}^{i_+}z_i^2 \\
& \leq \left(1+ \frac{4(\Lambda-\lambda)^2}{\lambda^2}\right)\frac{1}{N}\sum_{j=1}^{i_0-1} z_j^2 \\
& \le \frac{4}{\lambda^2}\left(1+4\Big(\frac{\Lambda}{\lambda} -1\Big)^2\right)\frac{1}{N} \sum_i \Big|\frac{1}{N}\sum_j \cof_{ij}(z_i-z_j)\Big|^2.
\end{split}
\end{equation}
\begin{figure}
	\includegraphics[height=4.5in]{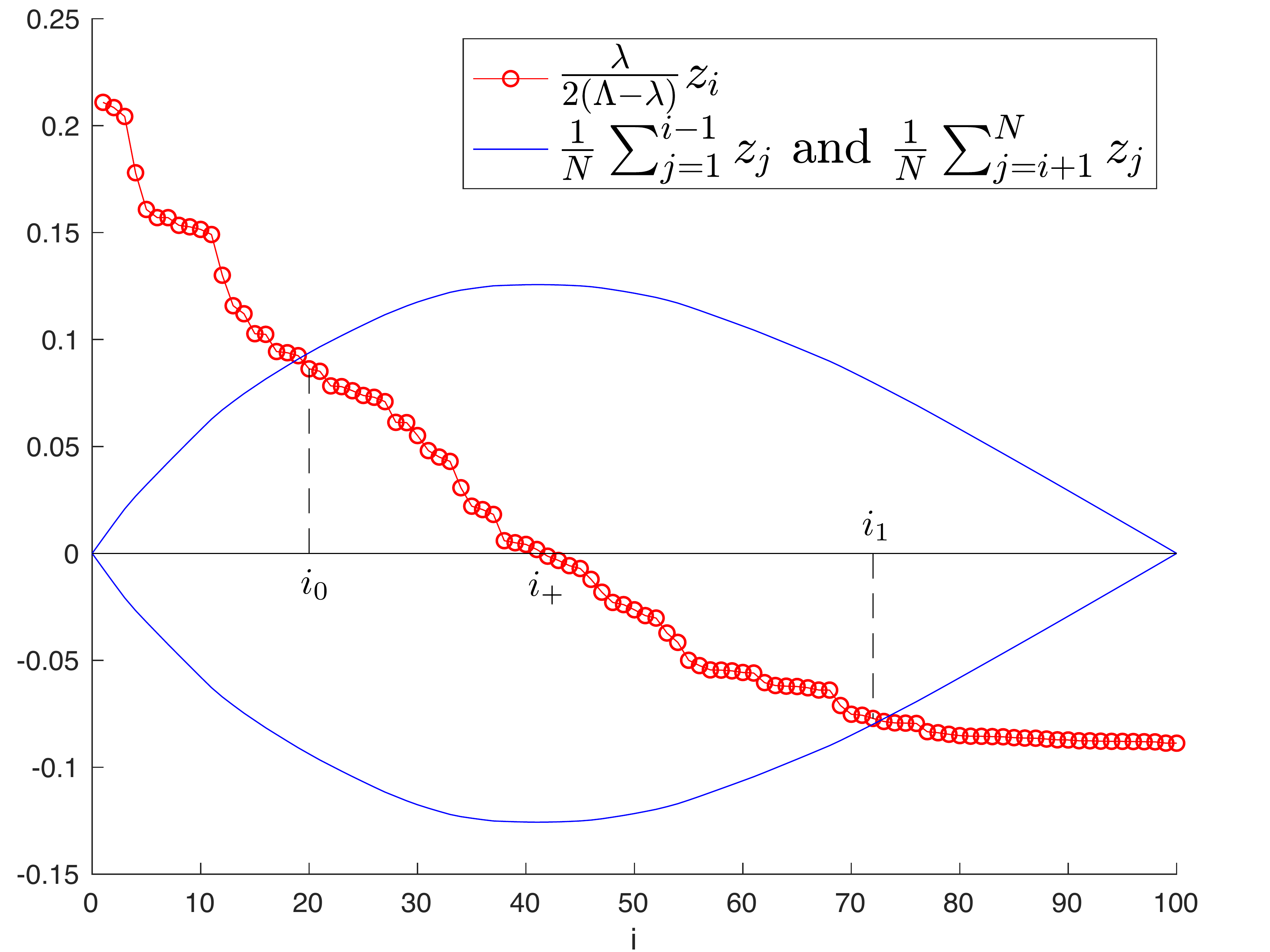}
	\caption{Comparison of means 
	}\label{fig:graph}
\end{figure}
Now apply \eqref{eq:pos} to $z_i$ replaced by $-z_i$, to find  the same upper-bound on the negative entries
\begin{equation}\label{eq:neg}
\begin{split}
\frac{1}{N}\sum_{z_i\le 0}z_i^2  
\le \frac{4}{\lambda^2}\left(1+4\Big(\frac{\Lambda}{\lambda}-1\Big)^2\right)\frac{1}{N} \sum_i \Big|\frac{1}{N}\sum_j \cof_{ij}(z_i-z_j)\Big|^2.
\end{split}
\end{equation}
The scalar result follows from \eqref{eq:pos},\eqref{eq:neg}.
For the $d$-dimensional case, notice that 
\[
\begin{split}
\sum_i |\bz_i|^2 &= \sum_{k=1}^d \sum_i |z_i^k|^2 \\
  \sum_i \Big|\frac{1}{N}\sum_j \cof_{ij}(\bz_i-\bz_j) \Big|^2 
&=    \sum_{k=1}^d  \sum_i\Big|\frac{1}{N}\sum_j \cof_{ij}(z^k_i-z^k_j) \Big|^2
\end{split}
\]
where superscript stands for component. Therefore the conclusion follows by applying the scalar result to the components of $\bz_i=\{z_i^k\}_k$ for each fixed $k$, ending with the same constant $C(\Lambda,\lambda)$ which is  independent of $d$.
\end{proof}

Next, we extend the result from the discrete framework to the continuum.
\begin{lemma}[{\bf Local and global means are comparable}]\label{lem:Mean}
Let $(\Omega,\mathcal{F},\mu)$ be a probability measure, and $\bX:\Omega\rightarrow\mathbb{R}^d$ be a random variable with mean $\ds \widebar{\bX}:= \int \bX(\omp)\rd{\mu(\omp)}$ and finite second moment, $\ds \int |\bX(\omp)|^2\rd{\mu(\omp)}<\infty$. Then  there exists a constant $C(\Lambda,\lambda) \lesssim \Lambda^2 \lambda^{-4}$, such that for any    $c=c(\om,\omp): \Omega\times\Omega \mapsto {\mathbb R}$ satisfying 
\[
0<\lambda\le c(\om,\omp) \le \Lambda,
\] 
there holds
\[
\int |\bX(\om)-\widebar{\bX}|^2 \rd{\mu(\om)} \le C(\lambda,\Lambda)\int \left| \int  c(\om,\omp) (\bX(\om)-\bX(\omp))\rd{\mu(\omp)} \right|^2 \rd{\mu(\om)}, \quad C(\Lambda,\lambda) \lesssim \frac{\Lambda^2}{\lambda^{4}}.
\]
\end{lemma}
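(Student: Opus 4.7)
The plan is to mirror the discrete argument of Lemma \ref{lem:mean} in the continuum setting, using the distribution function of $\bX$ as the analog of the sorted indices $z_1 \ge z_2 \ge \cdots \ge z_N$. Exactly as in the discrete case, by treating each coordinate separately it suffices to handle scalar $X:\Omega \to \mathbb{R}$, and by translating we may assume $\int X \, d\mu = 0$. The nontrivial case is $\Lambda>\lambda$ (otherwise $c$ is constant and the result is immediate with $C=1/\lambda^2$). Set $\kappa := \frac{\lambda}{2(\Lambda-\lambda)}$ and $h(t):=\int_{\{X>t\}} X \, d\mu$ for $t\ge 0$; this is a bounded, decreasing function of $t$. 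Define the threshold
\[
t_0 := \sup\{t \ge 0 : h(t) \ge \kappa t\},
\]
which plays the role of $z_{i_0}$. For $t>t_0$ one has $h(t) < \kappa t$, and by monotonicity the reverse inequality holds for $t\le t_0$.

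For $\omega$ with $X(\omega) > t_0$, I would split the integral over $\omega'$ at the level $X(\omega)$: on $\{X(\omega') \ge X(\omega)\}$ the integrand $X(\omega) - X(\omega')$ is non-positive, so I use $c\le \Lambda$; on $\{X(\omega') < X(\omega)\}$ it is non-negative, so I use $c\ge \lambda$. Writing $A := \int_{\{X \ge X(\omega)\}} X \, d\mu$ and $p:=\mu\{X \ge X(\omega)\}$ and using $\int X \, d\mu = 0$ (so $\int_{\{X<X(\omega)\}} X \, d\mu = -A$), this yields
\[
\int c(\omega,\omega')(X(\omega)-X(\omega'))\,d\mu(\omega') \ \ge \ [\lambda+(\Lambda-\lambda)p]X(\omega) - (\Lambda-\lambda)A \ \ge \ \lambda X(\omega) - (\Lambda-\lambda)A.
\]
Since $X(\omega)>t_0$, the defining inequality of $t_0$ gives $A \le h(X(\omega)^-) < \kappa X(\omega)$, so the right-hand side is at least $\tfrac{\lambda}{2} X(\omega) \ge 0$. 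Squaring and integrating over $\{X>t_0\}$ gives
\[
\int_{\{X>t_0\}} X^2\,d\mu \ \le \ \frac{4}{\lambda^2}\int_{\Omega}\Bigl|\int c(\omega,\omega')(X(\omega)-X(\omega'))\,d\mu(\omega')\Bigr|^2 d\mu(\omega).
\]

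For $\omega$ with $0\le X(\omega)\le t_0$, I bound $X^2 \le t_0^2$. From $h(t_0)\ge \kappa t_0$ at the supremum combined with the Cauchy--Schwarz inequality $\bigl(\int_{\{X>t_0\}} X \, d\mu\bigr)^2 \le \mu\{X>t_0\}\int_{\{X>t_0\}} X^2 \, d\mu \le \int_{\{X>t_0\}} X^2 \, d\mu$, I obtain $t_0^2 \le \kappa^{-2}\int_{\{X>t_0\}} X^2\,d\mu$, so in total
\[
\int_{\{0\le X\le t_0\}} X^2\,d\mu \ \le \ \frac{4(\Lambda-\lambda)^2}{\lambda^2}\int_{\{X>t_0\}} X^2\,d\mu.
\]
Combining these two bounds controls $\int_{\{X\ge 0\}} X^2\,d\mu$ by a multiple $\lesssim \Lambda^2/\lambda^4$ of the right-hand side; applying the identical argument to $-X$ handles $\{X<0\}$, and summing the components recovers the full $\mathbb{R}^d$-valued statement with $C(\lambda,\Lambda)\lesssim \Lambda^2/\lambda^4$.

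The main obstacle is purely technical: when $\mu$ has atoms or $X$ takes a value at a level set of positive measure, $h(t)$ has jumps and the identity $A=h(X(\omega))$ fails at $\omega$ with $X(\omega)$ an atomic value, requiring one-sided limits in the definition of $t_0$ and in the cited chain of inequalities. I would handle this by working with the right-continuous modification of $\mu\{X>t\}$ so that the supremum defining $t_0$ is attained, and being careful to keep the boundary contribution $X(\omega)\mu\{X=X(\omega)\}$ on the favorable side of each inequality. An entirely alternative route, which avoids these subtleties, is to approximate $X$ in $L^2(\mu)$ by simple functions constant on an equal-measure partition of $\Omega$, together with the corresponding piecewise-constant averages of $c$, apply the discrete Lemma \ref{lem:mean} (whose constant $C(\lambda,\Lambda)$ is uniform in $N$), and pass to the limit by dominated convergence; this gives the same constant without any distributional book-keeping.
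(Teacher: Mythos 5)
Your proposal is correct and follows essentially the same route as the paper: your threshold $t_0$ defined by $h(t)\ge\kappa t$ is exactly the paper's $x_0$ defined by $Y(x)\ge 0$, the splitting of the inner integral at the level $X(\omega)$ with the bounds $c\le\Lambda$ and $c\ge\lambda$ is identical, and the Cauchy--Schwarz step controlling $t_0^2$ matches the paper's bound \eqref{meanc2}. The atomic/boundary subtlety you flag is handled in the paper precisely by the one-sided limit you suggest (evaluating the threshold condition at $x_0-\epsilon$ and letting $\epsilon\to 0$), so no gap remains.
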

Observe that  the particular case of $\ds \rd{\mu}=\frac{1}{N}\sum_j\delta(\bx-\bz_j)$ recovers the discrete case of lemma \ref{lem:mean}.

\begin{proof}
We first prove the 1D case, for which the map $\bX$,  denoted by $X$, may assume  a zero mean, $\widebar{X}=0$, without loss of generality. Take $\om$ with $X(\om):=x\ge 0$, then
\[
\begin{split}
\int c(\om,\omp)(x&-X(\omp))\rd{\mu(\omp)} \\
= & -\int_{\omp: X(\omp)>x} \hspace*{-0.4cm}c(\om,\omp)(X(\omp)-x)\rd{\mu(\omp)} + \int_{\omp: X(\omp)\le x} \hspace*{-0.4cm}c(\om,\omp)(x-X(\omp))\rd{\mu(\omp)} \\
 \ge & -\Lambda\int_{\omp: X(\omp)>x} (X(\omp)-x)\rd{\mu(\omp)} + \lambda\int_{\omp: X(\omp)\le x} (x-X(\omp))\rd{\mu(\omp)} \\
 = & -(\Lambda-\lambda)\int_{\omp: X(\omp)>x} (X(\omp)-x)\rd{\mu(\omp)} + \lambda x\\
 \ge & -(\Lambda-\lambda)\int_{\omp: X(\omp)>x} X(\omp)\rd{\mu(\omp)} + \lambda x \\
\end{split}
\]
Let  
\begin{equation}
x_0 := \sup\left\{x: Y(x) \ge 0\right\}, \qquad Y(x) =\int_{\omp: X(\omp)>x}X(\omp)\rd{\mu(\omp)} - \frac{\lambda}{2(\Lambda-\lambda)}x. 
\end{equation}
Since $\lim_{x\rightarrow\infty}Y(x)=-\infty$ and $Y(0)\ge 0$, $x_0$ is finite and non-negative. It is clear that $Y(x)$ is decreasing and right-continuous. Therefore $Y(x)\ge 0$ for any $x< x_0$, and $Y(x)\le 0$ for any $x\ge x_0$.  

If $x\ge x_0$, then
\begin{equation}
\begin{split}
\int c(\om,\omp)(x-X(\omp))\rd{\mu(\omp)}  \ge -(\Lambda-\lambda)\frac{\lambda}{2(\Lambda-\lambda)}x + \lambda x =  \frac{\lambda}{2}x
\end{split}
\end{equation}
Thus taking square and integrating in $\om$ with $x=X(\om)\ge x_0 \ge 0$ gives
\begin{equation}\label{meanc1}
\int_{\om: X(\om)\ge x_0} \left(\int c(\om,\omp)(X(\om)-X(\omp))\rd{\mu(\omp)}\right)^2\rd{\mu(\om)} \ge   \frac{\lambda^2}{4}\int_{\om: X(\om)\ge x_0} \hspace*{-0.9cm}X^2(\om) \rd{\mu(\om)}.
\end{equation}

Then we claim that the above integral on $\{\om: X(\om)\ge x_0\}$ is enough to get the conclusion.  Notice that for any $\epsilon>0$, one has $Y(x_0-\epsilon)\ge 0$, i.e.,
\begin{equation}
x_0-\epsilon \le \frac{2(\Lambda-\lambda)}{\lambda}\int_{\om: X(\om)> x_0-\epsilon} X(\omp)\rd{\mu(\omp)}
\end{equation}
and therefore
\[
\begin{split}
(x_0-\epsilon)^2 & \le  \frac{4(\Lambda-\lambda)^2}{\lambda^2}\left(\int_{\om: X(\om)> x_0-\epsilon} X(\omp)\rd{\mu(\omp)}\right)^2 \\
& \le  \frac{4(\Lambda-\lambda)^2}{\lambda^2}\left(\int_{\om: X(\om)> x_0-\epsilon} X(\omp)^2\rd{\mu(\omp)}\right)\left(\int_{\om: X(\om)> x_0-\epsilon} \rd{\mu(\omp)}\right) \\
&\le \frac{4(\Lambda-\lambda)^2}{\lambda^2}\int_{\om: X(\om)> x_0-\epsilon} X(\omp)^2\rd{\mu(\omp)} 
\end{split}
\]
Taking $\epsilon\rightarrow 0$, noticing that the RHS integral domain $\{\om: X(\om)> x_0-\epsilon\}$ converges to $\{\om: X(\om)\ge x_0\}$, we get
\begin{equation}\label{meanc2}
x_0^2 \le \frac{4(\Lambda-\lambda)^2}{\lambda^2}\int_{\om: X(\om)\ge x_0} X(\omp)^2\rd{\mu(\omp)}
\end{equation}
Thus, using (\ref{meanc2}) and (\ref{meanc1}) we find
\[
\begin{split}
\int_{\omp: X(\omp)\ge 0} \hspace*{-0.6cm}X(\omp)^2&\rd{\mu(\omp)} =  \int_{\omp: 0\le X(\omp)< x_0} X(\omp)^2\rd{\mu(\omp)} + \int_{\omp: X(\omp)\ge x_0} X(\omp)^2\rd{\mu(\omp)} \\
\le & \left(\frac{4(\Lambda-\lambda)^2}{\lambda^2}+1\right)\int_{\omp: X(\omp)\ge x_0} X(\omp)^2\rd{\mu(\omp)}
 \\
 \le & \left(\frac{4(\Lambda-\lambda)^2}{\lambda^2}+1\right)\frac{4}{\lambda^2}\int_{[x_0,\infty)} \left(\int c(\om,\omp)(X(\om)-X(\omp))\rd{\mu(\omp)}\right)^2\rd{\mu(\om)}.
\end{split}
\]
Apply the last bound with $X(\cdot)$ replaced by $-X(\cdot)$ to find  that the $\ds \int_{\omp: X(\omp)\le 0} \hspace*{-0.6cm}X(\omp)^2\rd{\mu(\omp)} $ satisfies the same bound  on the right, which completes the scalar part of the proof.
For the $d$-dimensional case with $\bX = (X_1,\dots,X_d)$, notice that 
\[
\int |\bX(\omp)|^2 \rd{\mu(\omp)} = \sum_{k=1}^d \int |X_k(\omp)|^2 \rd{\mu(\omp)}
\]
and similarly,
\[
\int \left| \int  c(\om,\omp) (\bX(\om)-\bX(\omp))\rd{\mu(\omp)} \right|^2 \rd{\mu(\om)} = \sum_{k=1}^d \int \left| \int  c(\om,\omp) (X_k(\om)-X_k(\omp))\rd{\mu(\omp)} \right|^2 \rd{\mu(\om)}
\]
Applying the 1D result to the random variable $X_k$ gives 
\stepcounter{equation}
\begin{equation}\tag*{(\theequation)$_{k}$}\label{eq:mk}
\int |X_k(\omp)|^2 \rd{\mu(\omp)} \le C(\lambda,\Lambda)\int \left| \int  c(\om,\omp) (X_k(\om)-X_k(\omp))\rd{\mu(\omp)} \right|^2 \rd{\mu(\om)}.
\end{equation}
Summing \ref{eq:mk} recovers the desired result  with the constant $C(\Lambda,\lambda)$ independent of $d$.
\end{proof}

%%%%%%%%%%%%%%%%%%%%%%%%%%%%%%%%%
\section{Anticipation dynamics with attractive potentials}\label{sec:attraction}
%%%%%%%%%%%%%%%%%%%%%%%%%%%%%%%%%%

In this section we prove the flocking behavior of \eqref{eq:AT} asserted in Theorem \ref{thm:thm2}. Here, we treat the larger class of attractive potentials, thus extending the case of convex potentials of Theorem \ref{thm:thm1}. 
The starting point is the anticipated energy balance \eqref{energy}
\[
\frac{\rd}{\rd{t}}\cE(t) = -  \frac{\tau}{N}\sum_{i} | \dot{\bv}_i|^2.
\]

\begin{remark} We note in passing that the first-order system 
\[
\dot{\bx}_i = -\frac{1}{N}\sum_{j=1}^N \nabla U(|\bx_i-\bx_j|),
\]
 satisfies an   energy estimate, reminiscent of the energy-enstrophy balance in  the anticipation dynamics \eqref{eq:AT}, 
\[
\begin{split}
\frac{\rd}{\rd{t}} \frac{1}{2N^2}\sum_{i,j}U(|\bx_i-\bx_j|) = -\frac{1}{N}\sum_i|\dot{\bx}_i|^2.
\end{split}
\]
\end{remark} 

\begin{proof}[Proof of Theorem \ref{thm:thm2}]
 We aim to conduct a hypocoercivity argument to complement the anticipated energy estimate  \eqref{energy}.  
  To this end, we use the `anticipated'  cross term,
\begin{equation}
\begin{split}
\frac{\rd}{\rd{t}}(-\frac{1}{N}\sum_i \bx_i^\tau\cdot\bv_i) 
= &  \frac{1}{N}\sum_i \left( -(\bv_i + \tau\dot{\bv}_i)\cdot\bv_i - \bx_i^\tau\cdot\dot{\bv}_i \right) \\
\le &  \frac{1}{N}\sum_i \left( -|\bv_i|^2 + \tau(\frac{\tau}{2}|\dot{\bv}_i|^2 + \frac{1}{2\tau}|\bv_i|^2) + \frac{1}{2}(|\bx_i^\tau|^2 + |\dot{\bv}_i|^2) \right) \\
\le & -\frac{1}{2} \frac{1}{N}\sum_i |\bv_i|^2 + \frac{1}{2}\frac{1}{N}\sum_i|\bx_i^\tau|^2 + \frac{\tau^2+1}{2}\frac{1}{N}\sum_i|\dot{\bv}_i|^2
\end{split}
\end{equation}
Consider the modified anticipated energy $\widehat{\cE}(t):= \cE(t)-\epsilon(t)\frac{1}{N}\sum_i \bx_i^\tau\cdot\bv_i$, where $\epsilon(t)>0$ is small, decreasing, and is yet to be chosen. 
We first need to guarantee that this modified energy is positive definite, and in fact --- comparable to $\cE(t)$,
\begin{equation}\label{pd1}
\big|\epsilon(t)\frac{1}{N}\sum_i \bx_i^\tau\cdot\bv_i\big| \le \frac{\cE(t)}{2}
\end{equation}
Indeed, notice that
\[
\begin{split}
|\epsilon(t)\frac{1}{N}\sum_i \bx_i^\tau\cdot\bv_i| \le & \frac{1}{4N}\sum_i |\bv_i|^2 + \epsilon^2(t)\frac{1}{N}\sum_i |\bx_i^\tau|^2 \\
\le & \frac{1}{4N}\sum_i |\bv_i|^2 + \epsilon(t)^2 (2\cXtau)^{\beta}\frac{1}{2N^2}\sum_{i,j} \left(\frac{U(|\bx_i^\tau-\bx_j^\tau|)}{a}+1\right) \\
\le & \frac{1}{4N}\sum_i |\bv_i|^2 + \epsilon^2(t)
 (2C_\infty)^{\beta} \brt^{\frac{\beta}{2-2\beta}}\frac{1}{2N^2}\sum_{i,j} \left(\frac{U(|\bx_i^\tau-\bx_j^\tau|)}{a}+1\right).
\end{split}
\]
The second inequality is obtained similarly to (\ref{xitau2}), and the third inequality uses Lemma \ref{lem:bound}. Therefore it suffices to choose
\begin{equation}\label{epsilon1}
\epsilon(t) = \epsilon_0(10+t)^{-\alpha},\quad \alpha \ge \frac{\beta}{4-4\beta}
\end{equation}
with small enough $\epsilon_0$  to guarantee \eqref{pd1}. 

We now turn to verify the (hypo-)coercivity of $\widehat{\cE}(t)$,
\begin{equation}\label{hypo1}
\begin{split}
\frac{\rd}{\rd{t}} \Big(\cE(t) &-\epsilon(t)\frac{1}{N}\sum_i \bx_i^\tau\cdot\bv_i\Big) \\
 \le &  -  \frac{\tau}{N}\sum_{i} | \dot{\bv}_i|^2 -\frac{\epsilon(t)}{2} \frac{1}{N}\sum_i |\bv_i|^2 \\
 & \ \ + \epsilon(t)\left(\frac{1}{2}\frac{1}{N}\sum_i|\bx_i^\tau|^2 + \frac{\tau^2+1}{2}\frac{1}{N}\sum_i|\dot{\bv}_i|^2\right) + |\dot{\epsilon}(t)|\frac{1}{N}\sum_i |\bx_i^\tau\cdot\bv_i| \\
 \le &  - \left(\tau- \epsilon(t)\frac{\tau^2+1}{2}\right) \frac{1}{N}\sum_{i} | \dot{\bv}_i|^2 \\
 & \ \ -\frac{\epsilon(t)- |\dot{\epsilon}(t)|}{2}   \frac{1}{N}\sum_i |\bv_i|^2 + \frac{\epsilon(t)+ |\dot{\epsilon}(t)|}{2}\frac{1}{N}\sum_i|\bx_i^\tau|^2 
\end{split}
\end{equation}
The first pre--factor  on the right of \eqref{hypo1} $\leq -\tau/2$ for small enough $\epsilon_0$. The second pre-factor is  negative  since 
\[
|\dot{\epsilon}(t)| = \alpha\epsilon_0 (10+t)^{-\alpha-1} \le \frac{\alpha}{10}\epsilon(t).
\]
It remains to control  the last term on the right of \eqref{hypo1}. To this end we recall that $U$ is assumed attractive, $U'(r)/r \geq \langle r\rangle ^{-\beta}$, hence, by Lemma \ref{lem:bound}
\[
A \geq \frac{\wU'(r^\tau_{ij})}{r^\tau_{ij}} \ge a\langle r^\tau_{ij}\rangle^{-\beta} \ge c\brt^{-\frac{\beta}{2-2\beta}}, \qquad r^\tau_{ij}=|\bx_i^\tau-\bx_j^\tau|.
\]
We now  invoke Lemma \ref{lem:mean}: it implies
\begin{equation}
\begin{split}
\frac{1}{N}\sum_{i} | \dot{\bv}_i|^2 =  \frac{1}{N} \sum_i \left|\frac{1}{N}\sum_j \frac{\wU'(r^\tau_{ij})}{r^\tau_{ij}} (\bx_i^\tau-\bx_j^\tau)\right|^2
 \ge  c \brt^{-\lamta}\frac{1}{N}\sum_i|\bx_i^\tau|^2 ,\quad \lamta = \frac{2\beta}{1-\beta}
\end{split}
\end{equation}

Therefore, the last term on the right of \eqref{hypo1} does not exceed $\lesssim \epsilon(t)\brt^\lamta \frac{1}{N}\sum_i |\dot{\bv}_i|^2$ and choosing $\epsilon(t)$ as in (\ref{epsilon1}) with $\alpha=\lamta$ yields 
\begin{equation}
\frac{\rd}{\rd{t}} \widehat{\cE}(t) \le - \frac{\tau}{4} \frac{1}{N}\sum_{i} | \dot{\bv}_i|^2 -\frac{\epsilon_0(10+t)^{-\lamta}}{4}   \frac{1}{N}\sum_i |\bv_i|^2.
\end{equation}
As before, since $U$ is bounded, it has at most quadratic growth,
\[
\begin{split}
\frac{1}{2N^2}\sum_{i,j} U(|\bx_i^\tau-\bx_j^\tau|) \le & A\frac{1}{2N^2}\sum_{i,j} |\bx_i^\tau-\bx_j^\tau|^2 
\le  C\brt^{\frac{2\beta}{1-\beta}}\frac{1}{N}\sum_{i} | \dot{\bv}_i|^2 
=  C\brt^{\lamta}\frac{1}{N}\sum_{i} | \dot{\bv}_i|^2,
\end{split}
\]
and we conclude the sub-exponential decay
\begin{equation}
\frac{\rd}{\rd{t}}\widehat{\cE}(t) \le -c\brt^{-\lamta} \widehat{\cE}(t)\ \leadsto \ \widehat{\cE}(t) \le C e^{-t^{1-\lamta}},
\end{equation}
which implies the same decay rate of $\cE(t)$.
\end{proof}

%%%%%%%%%%%%%%%%%%%%%%%%%%%%%%%
\section{Anticipation  dynamics with repulsive-attractive potential}\label{sec:ra}
%%%%%%%%%%%%%%%%%%%%%%%%%%%%%%%%%%
In this section we prove Theorem \ref{thm:thm3}. The assumption $\sum_i\bx_i=\sum_i\bv_i=0$ amounts to saying that $\bx:=\bx_1=-\bx_2$, $\bv:=\bv_1=-\bv_2$. Replacing $U(|\bx|)$ by $U(2|\bx|)$ and $r_0$ by $r_0/2$, \eqref{eq:AT} becomes
\begin{equation}\label{eq2}
\left\{\begin{split}
& \dot{\bx}=\bv\\
& \dot{\bv} = - \nabla U(|\bx^\tau|)
\end{split}\right.
\end{equation}
where $\wU(r)$ has a local minimum at $r=r_0>0$ with $\wU''(r_0) = a >0$.

We use polar coordinates
\begin{equation}
\left\{\begin{split}
& x_1^\tau = r\cos\theta \\
& x_2^\tau = r\sin\theta \\
\end{split}\right.
\end{equation}
and
\begin{equation}
\left\{\begin{split}
& v_r = v_1\cos\theta+v_2\sin\theta \\
& v_\theta = -v_1\sin\theta+v_2\cos\theta \\
\end{split}\right.
\end{equation}
Then (\ref{eq2}) becomes
\begin{equation}\label{eqkr}
\left\{\begin{split}
& \dot{r} = v_r - \tau \wU'(r) \\
& \dot{\theta}=\frac{v_\theta}{r} \\
& \dot{v}_r= -\wU'(r) + \frac{v_\theta^2}{r}\\
& \dot{v}_\theta= \frac{-v_rv_\theta}{r} \\
\end{split}\right.
\end{equation}
We will focus on perturbative solutions near $r=r_0, v_r=v_\theta=0$. Write $r:=r_0+\kr$, and there hold the approximations
\begin{equation}
\wU(r) \approx \frac{a}{2}\kr^2,\quad\wU'(r) \approx a \kr ,\quad\wU''(r) \approx a
\end{equation}
Observe that our assumed initial configuration in \eqref{eq:small_enough}
implies, and in fact is equivalent to the assumption of smallness on the  \emph{anticipated} energy, $\cE(0) \le 2(1+\tau) \epsilon$. Theorem \ref{thm:thm3} is a consequence of the following proposition on the polar system \eqref{eqkr},
\begin{proposition}[{\bf polar coordinates}]
There exists a constant $\epsilon>0$, such that if the initial data is small enough:
\begin{equation}
\cE_0 := \left(\wU(r) + \frac{1}{2}v_r^2 + \frac{1}{2}v_\theta^2\right)_{|{t=0}} \le \epsilon
\end{equation}
then the solution to \eqref{eqkr} decays to zero at the following algebraic rates:
\begin{equation}
\kr \le C\brt^{-1}\ln^{1/2}\brt,\quad v_r\le C\brt^{-1}\ln^{1/2}\brt,\quad v_\theta \le C\brt^{-1/2}
\end{equation}
\end{proposition}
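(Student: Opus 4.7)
The plan is to carry out a nonlinear hypocoercivity argument for the anticipated energy $\cE(t):=U(r)+\tfrac{1}{2}v_r^2+\tfrac{1}{2}v_\theta^2$, which by a direct computation on (\ref{eqkr}) satisfies the dissipation identity $\dot{\cE} = -\tau(U'(r))^2$. First I would run a smallness/continuation argument: if $\epsilon$ is small enough, the monotonicity $\cE(t)\leq\cE(0)\leq\epsilon$ keeps $(\kr,v_r,v_\theta)$ in a neighborhood of the origin where the quadratic approximation $U'(r)\approx a\kr$, $U''(r)\approx a$ is uniformly valid and $r$ is bounded above and below away from $0$. Integrating the energy dissipation yields the a priori bound $\int_0^\infty (U'(r(s)))^2\,\rd s\leq \cE(0)/\tau$, equivalently $\kr\in L^2(0,\infty)$.

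The decisive observation is a \emph{centrifugal-shift} heuristic. The radial equation $\dot{v}_r=-U'(r)+v_\theta^2/r$ forces the system into a shifted equilibrium $r^*$ determined by $U'(r^*)=v_\theta^2/r^*$, giving $\kr^*:=r^*-r_0\sim v_\theta^2/(ar_0)$, and since $\dot{r}=v_r-\tau U'(r)$, an effective value $v_r^*:=\tau U'(r^*)\sim \tau v_\theta^2/r_0$ at equilibrium. Substituting this effective $v_r^*$ into $\frac{\rd}{\rd t}v_\theta^2=-2v_rv_\theta^2/r$ produces the closed-form ODE $\frac{\rd}{\rd t}v_\theta^2\approx -\frac{2\tau}{r_0^2}v_\theta^4$, whose solution obeys $v_\theta(t)\lesssim\brt^{-1/2}$, precisely the rate announced in the proposition.

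To make this heuristic rigorous I would introduce a modified energy of the form $\widehat{\cE}(t):=\cE(t)-\mu\,\kr\,v_r$ with small $\mu>0$. Combining $\dot{\cE}=-\tau(U'(r))^2$ with the time derivative of the cross term and using the quadratic approximation yields $\dot{\widehat{\cE}}\leq -c_1\kr^2-c_2 v_r^2+C|\kr|\,v_\theta^2/r_0$, and the error $|\kr|v_\theta^2$ is absorbed via Young's inequality into $\tfrac{c_1}{2}\kr^2+C'v_\theta^4$, after which $v_\theta^4\leq 2\epsilon v_\theta^2$ lets one close the estimate. The linearized radial subsystem $(\dot{\kr},\dot{v}_r)=(v_r-\tau a\kr,\,-a\kr+v_\theta^2/r_0)$ has stable matrix---trace $-\tau a<0$ and determinant $a>0$---so Duhamel's formula applied with the source $v_\theta^2/r\lesssim 1/s$ yields $|\kr(t)|,\,|v_r(t)|\lesssim \brt^{-1}\ln^{1/2}\brt$; the logarithmic factor arises from the borderline integration of the critical source $1/s$ against the exponentially decaying radial Green's function.

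The main obstacle is the rigorous decoupling between the fast exponentially contracting radial oscillation and the slow algebraically decaying tangential motion: the effective equilibrium $r^*$ itself depends on $v_\theta(t)$, so its derivative $\dot{r}^*$ must be controlled by the very hierarchy of rates one is trying to prove. In practice this forces a bootstrap: first establish $v_\theta\lesssim\brt^{-1/2}$ from the nonlinear ODE for $v_\theta^2$, then plug back into the stable radial subsystem to conclude $\kr,v_r\lesssim\brt^{-1}\ln^{1/2}\brt$, and finally check that the cross term in $\widehat{\cE}$ and all nonlinear remainders in $U'(r)-a\kr$, $1/r-1/r_0$, etc., remain consistent with the asserted decay, thereby closing the continuation.
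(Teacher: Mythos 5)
Your overall strategy --- the monotone anticipated energy $\frac{\rd}{\rd{t}}\cE = -\tau \wU'(r)^2$, a smallness/continuation argument keeping the trajectory where $\wU''\approx a$, a modified energy with a radial cross term, and a final Duhamel/bootstrap on the stable radial subsystem driven by the source $v_\theta^2/r$ --- is essentially the paper's. Your second stage (the $\brt^{-1}\ln^{1/2}\brt$ rate for $\kr,v_r$ \emph{once} $v_\theta\lesssim\brt^{-1/2}$ is known) would go through as you describe; the paper implements it with the functional $\widecheck{\cE}=\wU(r)+\frac{1}{2}v_r^2-c_1\wU'(r)v_r$ and the source bound $v_\theta^4\lesssim\brt^{-2}$.

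The genuine gap is in the first stage, the decay $v_\theta\lesssim\brt^{-1/2}$. Your modified energy $\widehat{\cE}=\cE-\mu\,\kr v_r$ contains no cross term involving $v_\theta$, so its derivative has no negative-definite term in $v_\theta$ at all: you arrive at $\frac{\rd}{\rd{t}}\widehat{\cE}\le -c_1\kr^2-c_2v_r^2+C'v_\theta^4$, and the step ``$v_\theta^4\le 2\epsilon v_\theta^2$ lets one close the estimate'' merely bounds a positive error by another positive quantity --- it cannot produce any decay of $v_\theta$. You then fall back on the ``nonlinear ODE'' $\frac{\rd}{\rd{t}}v_\theta^2\approx-\frac{2\tau}{r_0^2}v_\theta^4$, but the exact equation is $\frac{\rd}{\rd{t}}v_\theta^2=-2v_rv_\theta^2/r$ with $v_r$ oscillating in sign, so this is not a closed differential inequality; proving that the time-average of $v_r$ behaves like the centrifugal shift $\tau\wU'(r^*)>0$ is precisely the missing step, which you flag as ``the main obstacle'' but do not resolve. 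The paper's device is to add the \emph{second} cross term $-c\,v_rv_\theta^2$ to the modified energy: its time derivative equals $-v_\theta^4/r+\wU'(r)v_\theta^2+2v_r^2v_\theta^2/r$, whose leading term supplies the coercive $-\frac{c}{2r_0}v_\theta^4$ while the remainders are absorbed into $\kr^2$, $v_r^2$ and $\epsilon\, v_\theta^4$. This yields $\frac{\rd}{\rd{t}}\widehat{\cE}\le-\lamta_1(\kr^2+v_r^2+v_\theta^4)\lesssim-\widehat{\cE}^2$, hence $\cE(t)\lesssim\brt^{-1}$ and $v_\theta\lesssim\brt^{-1/2}$. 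Without this term (or a substitute such as a rigorous averaging/normal-form argument), your proof does not establish the tangential rate, and the second stage has nothing to bootstrap from.
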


\begin{proof}
Fix $0<\zeta\le \min\{\frac{r_0}{2},1\}$ as a small number such that
\begin{equation}
\frac{a}{2}\le \wU''(r) \le 2a,\quad \forall \kr^2 \le \zeta,
\end{equation}
and as a result,
\begin{equation}\label{zeta}
\frac{a}{2}|\kr| \le |\wU'(r)| \le 2a|\kr|,\quad \frac{a}{4}\kr^2 \le \wU(r) \le a\kr^2,\quad  \forall \kr^2 \le \zeta.
\end{equation}
We start from the energy estimate for the anticipated energy $\displaystyle \cE(t):= \wU(r) + \frac{1}{2}v_r^2 + \frac{1}{2}v_\theta^2$
\[
\begin{split}
\frac{\rd}{\rd{t}}\cE(t) &  =   \wU'(r) \cdot (v_r - \tau \wU'(r)) + v_r\cdot \left(-\wU'(r) + \frac{v_\theta^2}{r}\right) + v_\theta\cdot \frac{-v_rv_\theta}{r}=  -\tau  \wU'(r)^2
\end{split}
\]
Therefore, for any positive $\ds \epsilon\le \frac{a}{4}\zeta$ to be chosen later, if $\cE_0 \le \epsilon$, then 
\begin{equation}\label{small}
\kr^2 \le \frac{4}{a}\wU(r) \le \frac{4}{a}\epsilon< \zeta, \qquad v_r^2 \le \epsilon,
\end{equation}
hold for all time which in turn implies that \eqref{zeta} holds.
Next we consider the cross terms
\begin{equation}\label{eq:rtcross1}
\begin{split}
\frac{\rd}{\rd{t}} (-v_rv_\theta^2) =  - \left( -\wU'(r) + \frac{v_\theta^2}{r}\right)v_\theta^2 - 2v_rv_\theta\frac{-v_rv_\theta}{r} =  - \frac{v_\theta^4}{r} + \wU'(r) v_\theta^2 + 2\frac{v_r^2v_\theta^2}{r},
\end{split}
\end{equation}
and
\begin{equation}\label{eq:rtcross2}
\begin{split}
\frac{\rd}{\rd{t}} (-\wU'(r)v_r) = &  -\wU''(r)v_r\cdot(v_r - \tau \wU'(r)) - \wU'(r)\left(-\wU'(r) + \frac{v_\theta^2}{r}\right) \\
= & -\wU''(r)v_r^2 + \tau \wU''(r)v_r \wU'(r) + \wU'(r)^2 - \wU'(r)\frac{v_\theta^2}{r}
\end{split}
\end{equation}
 We now introduce the modified energy,
\[
\widehat{\cE}(t):= \wU(r) + \frac{1}{2}v_r^2 + \frac{1}{2}v_\theta^2 -c v_rv_\theta^2 - c \wU'(r)v_r,
\]
depending on a small $c>0$ which is yet to be determined.
A straightforward  calculation, based on \eqref{zeta}  shows its decay rate does not exceed
\[
\begin{split}
\frac{\rd}{\rd{t}}\widehat{\cE}(t) &=   -\tau \wU'(r)^2 - \frac{c}{r}v_\theta^4 -c\wU''(r)v_r^2 \\ 
& \ \ \ + c\left(\wU'(r) v_\theta^2 + 2\frac{v_r^2v_\theta^2}{r}\right)  + c\left(\tau \wU''(r)v_r \wU'(r) + \wU'(r)^2 - \wU'(r)\frac{v_\theta^2}{r}\right) \\
& \le  - \tau \frac{a^2}{4}\kr^2 - \frac{c}{2r_0}v_\theta^4 -c\frac{a}{2}v_r^2 \\ 
 & \ \ \ + c\left(2a|\kr| v_\theta^2 + 4\frac{v_r^2v_\theta^2}{r_0}\right)
  + c\left(4\tau a^2  |v_r \kr| + 4a^2 \kr^2 + 4a|\kr| \frac{v_\theta^2}{r_0}\right),
\end{split}
 \]
 and by Cauchy-Schwarz
 \begin{equation}\label{eq:hatE}
 \begin{split}
\frac{\rd}{\rd{t}}\widehat{\cE}(t) & \le  - \tau \frac{a^2}{4}\kr^2 - \frac{c}{2r_0}v_\theta^4 -c\frac{a}{2}v_r^2 \\ 
& \ \ \ + c\left(\frac{1}{\kapta} a\kr^2 + \kapta a v_\theta^4 + \kapta \frac{2}{r_0}v_\theta^4 + \frac{1}{\kapta}\frac{2}{r_0}v_r^4 \right) \\
&  \ \ \ + c\left(2\kapta \tau a^2  v_r^2 + \frac{1}{\kapta}2\tau a^2  \kr^2 + 4a^2 \kr^2 + \frac{1}{\kapta}\frac{2a}{r_0}\kr^2+ \kapta\frac{2a}{r_0}v_\theta^4 \right) \\
& =  - \left(\tau \frac{a^2}{4} - \frac{c}{\kapta}\big(a+2\tau a^2 + 4\kapta a^2 +\frac{2a}{r_0}\big) \right)\kr^2 \\
& \ \ \ - c\left( \frac{1}{2r_0} -\kapta(a+\frac{2}{r_0} + \frac{2a}{r_0})\right)v_\theta^4  - c\left(\frac{a}{2} - \frac{1}{\kapta}\frac{2}{r_0}v_r^2 -  2\kapta\tau a^2\right)v_r^2, 
\end{split}
\end{equation}
with $\kapta\in (0,1)$ which is yet to be determined.
We want to guarantee that the three pre-factors on the right are positive. To this end, we first fix the ratio 
\stepcounter{equation}
\begin{equation}\tag*{(\theequation)$_{1}$}\label{eq:pre1}
\frac{c}{\kapta} = \frac{\tau \frac{a^2}{8}}{a+2\tau a^2 + 4 a^2 +\frac{2a}{r_0}}
\end{equation}
so that the  first pre-factor is lower-bounded by $\tau \frac{a^2}{8}$. Then we choose 
\begin{equation}\tag*{(\theequation)$_{2}$}\label{eq:pre2}
\kapta \le \min\left\{ 1, \frac{\frac{1}{4r_0}}{a+\frac{2}{r_0} + \frac{2a}{r_0}}, \frac{\frac{a}{4}}{2\tau a^2} \right\}
\end{equation}
so that the second pre-factor, the coefficient of $v_\theta^4$, becomes  $\ds \ge\frac{c}{4r_0}$. Finally, the third pre-factor is also positive because (i) a small enough $\kapta$ was chosen in \ref{eq:pre2}, and (ii) a key  aspect in which  $v_r$ can be made small enough to compensate for small $\kapta$, so that  the negative contribution of $\ds - \frac{1}{\kapta}\frac{2}{r_0}v_r^2$ can be absorbed into the rest: indeed, if 
\begin{equation}\tag*{(\theequation)$_{3}$}\label{vr_small}
v_r^2 \le \frac{\frac{a}{8}}{\frac{1}{\kapta}\frac{2}{r_0}} = \frac{ar_0}{16}\kapta
\end{equation}
then 
\[
c\left(\frac{a}{2} - \frac{1}{\kapta}\frac{2}{r_0}v_r^2 -  2\kapta\tau a^2\right) \leq c\left(\frac{a}{2} - \frac{1}{\kapta}\frac{2}{r_0} \frac{\frac{a}{8}}{\frac{1}{\kapta}\frac{2}{r_0}} -  \frac{a}{4}\right) = \frac{ca}{8}.
\]
Therefore,  \eqref{eq:hatE} implies  the decay rate
\begin{equation}\label{eq:Ehatdecay}
\frac{\rd}{\rd{t}}\widehat{\cE}(t) \le -\lamta_1(\kr^2 + v_\theta^4 + v_r^2), \qquad \lamta_1 = \min\left\{\tau \frac{a^2}{8}, \frac{c}{4r_0}, \frac{ca}{8}\right\},
\end{equation}
provided (\ref{zeta}) and \ref{eq:pre1}--\ref{vr_small} are satisfied. 

Moreover, we claim that $\widehat{\cE}$ is comparable to the original anticipated  energy $\cE$. Indeed, if in addition
\stepcounter{equation}
\begin{equation}\tag*{(\theequation)$_{1}$}\label{eq:pre4}
 c\sqrt{\frac{ar_0}{16}\kapta} \le \frac{1}{4}
\end{equation}
holds, then in view of 
 \ref{vr_small}, $\ds c|v_rv_\theta^2| \le \frac{1}{4}v_\theta^2$, and 
 if 
 \begin{equation}\tag*{(\theequation)$_{2}$}\label{eq:pre5}
c\le \min\Big\{\frac{1}{8}, \frac{1}{4a}\Big\},
\end{equation}
holds, then in view of \eqref{zeta},
\[
c|\wU'(r)v_r| \le ca(\kr^2 + v_r^2) \le ca\Big(\frac{4}{a}\wU(r) + v_r^2\Big) \le \frac{1}{2}\Big(\wU(r)+\frac{1}{2}v_r^2\Big).
\]
It follows that 
\begin{equation}\label{eq:comp}
\frac{1}{2}\cE(t) \le \widehat{\cE}(t)\le 2\cE(t),
\end{equation}
provided \ref{eq:pre4}--\ref{eq:pre5} are satisfied.
These last two  conditions  are clearly met for small enough  $\kapta$: recall that the ratio $c/\kapta$ was fixed in \ref{eq:pre1} then 
\stepcounter{equation}
\begin{equation}\tag*{(\theequation)$_{1}$}\label{eq:pre6}
\kapta \le \frac{a+2\tau a^2 + 4 a^2 +\frac{2a}{r_0}}{\tau \frac{a^2}{8}} \min\{ \sqrt{ar_0}, \frac{1}{8}, \frac{1}{4a}\}
\end{equation}
suffices to guarantee \ref{eq:pre4}--\ref{eq:pre5}. 
Thus, we finally choose small enough $\kapta$  to satisfy both \ref{eq:pre2},\ref{eq:pre6}, and small enough 
$\ds \epsilon < \min\big\{\frac{a}{4}\zeta,\frac{ar_0}{16}\kapta\big\} $ so that \eqref{small} and \ref{vr_small} hold.
By now we proved \eqref{eq:Ehatdecay} and \eqref{eq:comp}.
Finally,  notice that for small enough $\kr,v_r$ we have
\[
\kr^2 + v_\theta^4 + v_r^2 \ge \kr^4 + v_\theta^4 + v_r^4 \ge \frac{1}{3}(\kr^2 + v_\theta^2 + v_r^2)^2 \ge \frac{1}{3}\min\Big\{\frac{1}{a^2},1\Big\}\cE^2(t).
\]
We conclude, in view of  \eqref{eq:Ehatdecay} and \eqref{eq:comp},
\[
\frac{\rd}{\rd{t}}\widehat{\cE}(t) \le -\lamta \widehat{\cE}^2(t) \ \leadsto \ \widehat{\cE}(t)  \le \frac{1}{\lamta t + 1/\widehat{\cE}(0)},\quad \lamta = \frac{\lamta_1}{12}\min\left\{\frac{1}{a^2},1\right\}.
\]
 It follows that $|v_\theta|\le C\brt^{-1/2}$.

To get a better decay rate for $\kr$ and $v_r$, we use yet another modified energy functional, 
\[
\widecheck{\cE}(t):= \wU(r) + \frac{1}{2}v_r^2 - c_1 \wU'(r)v_r,
\]
 for which we find
\[
\begin{split}
\frac{\rd}{\rd{t}}\widecheck{\cE}(t) = &  -\tau \wU'(r)^2 - c_1\wU''(r)v_r^2 + \frac{v_r v_\theta^2}{r} + c_1\left(\tau \wU''(r)v_r \wU'(r) + \wU'(r)^2 - \wU'(r)\frac{v_\theta^2}{r}\right) \\
\le & -\tau \frac{a^2}{4}\kr^2 -c_1\frac{a}{2}v_r^2 + \frac{2v_r v_\theta^2}{r_0} + c_1\left(4\tau a^2  |v_r \kr| + 4a^2 \kr^2 + 4a|\kr| \frac{v_\theta^2}{r_0}\right) \\ 
\le & -\left(\tau \frac{a^2}{4} - \frac{c_1}{\kapta_1}\Big(2\tau a^2 + 4\kapta_1 a^2 + \frac{2a}{r_0}\Big) \right) \kr^2 -c_1\left(\frac{a}{2} - \kapta_1 \frac{1}{r_0} - \kapta_1\cdot 2\tau a^2\right)v_r^2 \\
& + \left( \frac{1}{c_1\kapta_1r_0} + c_1\kapta_1\frac{2a}{r_0}\right) v_\theta^4.
\end{split}
\]
By similar choices of $c_1$ and $\kapta_1$, one can guarantee that $\widecheck{\cE}(t)$ is equivalent to $\kr^2+v_r^2$, and the coefficients of $\kr^2$ and $v_r^2$ are positive. Therefore
\begin{equation}
\frac{\rd}{\rd{t}}\widecheck{\cE}(t) \le -\lamta_2 \widecheck{\cE}(t) + Cv_\theta^4 \le -\lamta_2 \widecheck{\cE}(t) + C\brt^{-2}
\end{equation}
This gives
\begin{equation}
\widecheck{\cE}(t) = e^{-\lamta_2 t}\widecheck{\cE}(0) + C\int_0^t e^{-\lamta_2(t-s)}(1+s)^{-2} \rd{s}
\end{equation}
We estimate the last integral  for large enough $t$,
\begin{equation}
\begin{split}
\int_0^t e^{-\lamta_2(t-s)}(1+s)^{-2} \rd{s} & \le  \left(\int_0^{t-\frac{1}{\lamta_2}\ln \brt} + \int_{t-\frac{1}{\mu}\ln \brt}^t\right)e^{-\lamta_2(t-s)}(1+s)^{-2} \rd{s}\\
&\le  \brt^{-1} \int_0^t (1+s)^{-2} \rd{s} +  \Big(1+(t-\frac{1}{\lamta_2}\ln \brt)\Big)^{-2}\frac{1}{\lamta_2}\ln \brt\\
& \le  \brt^{-2}  + \frac{2}{\lamta_2} \brt^{-2}\ln \brt.
\end{split}
\end{equation}
This shows that $\widecheck{\cE}(t) \le C\brt^{-2}\ln\braket{1+t}$, and therefore $ |v_r|+|\kr|\le  C\brt^{-1}\ln^{1/2}\braket{1+t}$. 
\end{proof}
Finally, we conclude by noting that the last  bound on $\kr$ tells us that
\[
\big||\bx^\tau_1(t)-\bx^\tau_2(t)|-r_0\big| \le C\brt^{-1}\ln^{1/2}\braket{1+t},\quad |\bv_1(t)-\bv_2(t)| \le C\brt^{-1/2}.
\]
Observe that this  bound on relative \emph{anticipated} positions is in fact equivalent to the claimed statement of the current positions, $\big||\bx_1(t)-\bx_2(t)|-r_0\big|\lesssim \brt^{-1}\ln^{1/2}\braket{1+t}$, which concludes the proof of theorem \ref{thm:thm3}.

\begin{remark}
Numerical examples \cite[sec. 1]{GTLW2017} show that the rate $v_\theta={\mathcal O}(t^{-\nicefrac{1}{2}})$ is optimal. Therefore, 
\[
\theta(t) = \theta_0 + \int_0^t \frac{1}{r(s)}v_\theta(s)\rd{s} = {\mathcal O}(\sqrt{t}), 
\]
which means that $\theta$ needs not converge to any point, even for small initial data. Thus, although we trace the dynamics of $\kr,v_r,v_\theta$  using  essentially perturbative arguments, the dynamics of \eqref{eq2} is not.
\end{remark}

%%%%%%%%%%%%%%%%%%%%%%%%%%%%
\section{Anticipation dynamics: hydrodynamic formulation}\label{sec:hydro}
%%%%%%%%%%%%%%%%%%%%%%%%%%%%%%%%%%
The large crowd dynamics associated with \eqref{eq:AT} is captured by the macroscopic density $\rho(t,\bx): \mathbb{R}_+\times \mathbb{R}^d \mapsto \mathbb{R}_+$ and momentum $\rho\bu(t,\bx): \mathbb{R}_+\times \mathbb{R}^d \mapsto \mathbb{R}^d$, which are governed by the hydrodynamic description \eqref{eq:hydro}
\[
\left\{\begin{split}
 \rho_t + \nabla_\bx\cdot (\rho \bu) &= 0 \\
 (\rho\bu)_t + \nabla_\bx\cdot (\rho\bu\otimes \bu) & =   - \int \nabla U(|\bx^\tau - \by^\tau|) \rd\rho(\by), \quad \bx^\tau := \bx + \tau \bu(t,\bx).
\end{split}\right.
\]
The flux on the left involves  additional second-order moment fluctuations, ${\mathcal P}$, which can be dictated by proper closure relations. As in \cite{HT2008}, we will focus on the mono-kinetic case, in which case ${\mathcal P}=0$. 

To study the large time behavior we appeal, as in the discrete case, to the basic bookkeeping of energy and enstrophy: here we consider the 
\emph{anticipated energy}
\begin{equation}
\cE(t) := \int \frac{1}{2}|\bu(t,\bx)|^2 \rho(t,\bx)\rd{\bx} + \frac{1}{2}\int\int U(|\bx^\tau(t)-\by^\tau(t)|) \rd\rho(t,\bx)\rd\rho(t,\by).
\end{equation}
Away from vacuum, the velocity field $\bu(\bx)=\bu(t,\bx)$ satisfies the transport equation
\begin{subequations}\label{eqs:trans}
\begin{equation}\label{eq:trans}
\bu_t(t,\bx) + \bu\cdot\nabla_\bx \bu(t,\bx) = \aL(\rho,\bu)(t,\bx),
\end{equation}
where  $\aL(\rho,\bu)$ denotes the anticipated interaction term 
\begin{equation}\label{eq:al}
\aL(\rho,\bu)(t,\bx):=-\int \nabla U(|\bx^\tau(t) - \by^\tau(t)|) \rd\rho(t,\by), \qquad \bx^\tau(t) = \bx + \tau \bu(t,\bx).
\end{equation}
\end{subequations}
We compute (suppressing the time dependence)
\[
\begin{split}
\frac{\rd}{\rd{t}} \cE(t) = & \int \bu(\bx)\cdot(-\bu\cdot\nabla \bu +\aL(\bx)) \rd \rho(\bx) + \int \frac{1}{2}|\bu(\bx)|^2 (-\nabla\cdot (\rho \bu))\rd{\bx} \\
& + \frac{\tau}{2}\int\int \nabla U(|\bx^\tau-\by^\tau|)\cdot ( -\bu(\bx)\cdot\nabla \bu(\bx) +\aL(\bx) +  \bu(\by)\cdot\nabla \bu(\by) -\aL(\by)) \rd\rho(\bx)\rd\rho(\by) \\
& + \frac{1}{2}\int\int U(|\bx^\tau-\by^\tau|) (-\nabla\cdot (\rho \bu)(\by)))\rd\rho(\bx)\rd{\by}\\
& + \frac{1}{2}\int\int U(|\bx^\tau-\by^\tau|) (-\nabla\cdot (\rho \bu)(\bx))\rd{\bx}\rd\rho(\by) \\
= & \int \bu(\bx)\cdot \aL(\bx) \rd\rho(\bx)+ \frac{\tau}{2}\int\int \nabla U(|\bx^\tau-\by^\tau|)\cdot (  \aL(\bx)  - \aL(\by) ) \rd\rho(\by)\rd\rho(\bx)  \\
& + \frac{1}{2}\int\int \nabla U(|\bx^\tau-\by^\tau|)\cdot (   -\bu(\by)+\bu(\bx)) \rd\rho(\by)\rd\rho(\bx)\\
= & \tau\int\int \nabla U(|\bx^\tau-\by^\tau|)\cdot  \aL(\bx)  \rd\rho(\by)\rd\rho(\bx)\\
= & -\tau \int\Big|\int  \nabla U(|\bx^\tau-\by^\tau|)  \rd\rho(\by)\Big|^2\rd\rho(\bx).
\end{split}
\]
This is the continuum analogue of the discrete enstrophy statement \eqref{energy}, which becomes apparent when it is expressed in terms of the \emph{material derivative}, 
\begin{equation}\label{eq:hydroens}
\frac{\rd}{\rd{t}} \cE(t) = -\tau \int\Big|\int  \nabla U(|\bx^\tau-\by^\tau|)  \rd\rho(\by)\Big|^2\rd\rho(\bx) = -\tau \int \left|\frac{\rD}{\rD{t}}\bu(t,\bx)\right|^2 \rd\rho(t,\bx).
\end{equation}
\subsection{Smooth solutions must flock}\label{subsec:must}
We consider the anticipation hydrodynamics \eqref{eq:hydro} with  attractive potentials, \eqref{eq:attractive}
\[
a\langle r\rangle^{-\beta} \leq \frac{U'(r)}{r}, \quad U''(r)  \leq A, \qquad 0<a<A.
\]
The study of its large time `flocking' behavior proceeds   precisely along the lines of our discrete proof of theorem \ref{thm:thm2}. Here are the three main ingredients in the proof of theorem \ref{thm:thm4}.

{\bf Step (i)} We begin where we left with the anticipated energy balance \eqref{eq:hydroens}, which we express as 
\[
\frac{\rd}{\rd{t}}\cE(t) = \int\Big|\int  \nabla U(|\bx^\tau-\by^\tau|)  \rd\rho(\by)\Big|^2\rd\rho(\bx)
 = \int\Big|\int  \frac{ U'(|\bx^\tau-\by^\tau|)}{|\bx^\tau-\by^\tau|} (\bx^\tau-\by^\tau)\rd\rho(\by)\Big|^2\rd\rho(\bx).
\]
We now appeal to the special case of lemma \ref{lem:Mean} with $\Omega=\mathbb{R}^d$ (with variable $\bx$), $\rd{\mu}=\rho(\bx)\rd{\bx}$, $\bX(\bx) = \bx^\tau, \bX(\by)=\by^\tau$ and $\ds c(\bx,\by)=\frac{U'(|\bx^\tau-\by^\tau|)}{|\bx^\tau-\by^\tau|}$, in which case we have
\begin{equation}
\int |\bx^\tau|^2 \rho(\bx)\rd{\bx} \le C(\lambda,\Lambda)\int \left| \int  \frac{U'(|\bx^\tau-\by^\tau|)}{|\bx^\tau-\by^\tau|} (\bx^\tau-\by^\tau)\rho(\by)\rd{\by} \right|^2 \rho(\bx)\rd{\bx}
\end{equation}
where $\Lambda$ and $\lambda$ are the upper- and respectively, lower-bounds of $\ds \frac{ U'(|\bx^\tau-\by^\tau|)}{|\bx^\tau-\by^\tau|}$,
\begin{equation}\label{eq:stepi}
\begin{split}
\frac{\rd}{\rd{t}}\cE(t) &= -\tau\int\Big|\int  \frac{ U'(|\bx^\tau-\by^\tau|)}{|\bx^\tau-\by^\tau|} (\bx^\tau-\by^\tau)\rd\rho(\by)\Big|^2\rd\rho(\bx)\\
& \lesssim \Big(\min \frac{ U'(|\bx^\tau-\by^\tau|)}{|\bx^\tau-\by^\tau|}\Big)^{-4}
\int\int  \big|\bx^\tau-\by^\tau\big|^2\rd\rho(\by)\rd\rho(\bx)
\end{split}
\end{equation}

{\bf Step (ii)}. A bound on the spread of the anticipated positions supported on non-vacuous states
\begin{equation}\label{eq:Spread}
\max_{\bx^\tau\in \text{supp}\,\rho(t,\cdot)}|\bx^\tau|  \leq c\brt^\eta.
\end{equation}
Arguing along the lines of lemma \ref{lem:bound} one finds that \eqref{eq:Spread} holds with $\ds \eta=\frac{1}{2(1-\beta)}$, hence 
\[
a\brt^{-\frac{\beta}{2(1-\beta)}} \lesssim \frac{U'(|\bx^\tau-\by^\tau|)}{|\bx^\tau-\by^\tau|} \le A, \qquad \bx^\tau,\by^\tau \in \text{supp}\,\rho(t,\cdot),
\]
and \eqref{eq:stepi} implies
\begin{equation}\label{eq:Enspos}
\begin{split}
\frac{\rd}{\rd{t}} \cE(t) & =  -\tau\int\Big|\int  \frac{ U'(|\bx^\tau-\by^\tau|)}{|\bx^\tau-\by^\tau|} (\bx^\tau-\by^\tau)\rd\rho(\by)\Big|^2\rd\rho(\bx) \\
& \lesssim \tau \brt^{-\frac{2\beta}{1-\beta}}\int\int  |\bx^\tau-\by^\tau|^2  \rd\rho(\bx)\rd\rho(\by).
\end{split}
\end{equation}
We are now exactly at the point we had with the discrete anticipation dynamics,
in which the decay of anticipated energy is controlled by the fluctuations of anticipated position, \eqref{eq:enspos}. 

{\bf Step (iii)}. To close the decay rate \eqref{eq:Enspos} one invokes hypocoercivity argument on the modified energy, 
\[
\widehat{\cE}(t):= \cE(t)-\epsilon(t)\int \bx^\tau\cdot\bu(\bx)\rd{\rho(\bx)}.
\]
Arguing along the lines of section \ref{sec:attraction}, one can find a suitable 
$\epsilon(t)>0$ which leads to the sub-exponential decay of  $\widehat{\cE}(t)$ and hence of the comparable $\cE(t)$, thus completing the proof of theorem  \ref{thm:thm4}.

\subsection{Existence of smooth solutions -- the 1D case}\label{subsec:1D}
We study the existence of smooth solutions of the 1D anticipated hydrodynamic system
\begin{equation}\label{eq:1Dhydro}
\left\{ \ \ \begin{split}
& \partial_t \rho + \partial_x (\rho u) = 0 \\
& \partial_t u + u\partial_x u =   - \int  U'(|x^\tau - y^\tau|) \rho(y) \rd{y}, \qquad x^\tau=x+\tau u(t,x),
\end{split}\right.
\end{equation}
subject to uniformly convex potential $U''(\cdot) \geq a>0$. 
Let $\texttt{d}:= \partial_x u$. Then
\begin{equation}\label{eq:ed}
\begin{split}
& \partial_t \texttt{d} + u\partial_x \texttt{d} + \texttt{d}^2 =  - (1+\tau \texttt{d})\int  U''(|x^\tau - y^\tau|) \rho(y) \rd{y}
\end{split}
\end{equation}
or
\begin{equation}\label{eq:quad}
\begin{split}
 \texttt{d}' = - \texttt{d}^2  - c(1+\tau \texttt{d}), \qquad {}':=\partial_t + u\partial_x,
\end{split}
\end{equation}
where by uniform convexity $c=c(t,x):=        \int  U''(|x^\tau - y^\tau|) \rho(y) \rd{y} \in [m_0 a,m_0A]$.
The discriminant of RHS,  given by
%\begin{equation}
$(\tau c)^2 - 4c = c(\tau^2 c -4)$
%\end{equation}
is  non-negative, provided $\tau^2 m_0 a \ge 4$.
In this case, the smaller root of \eqref{eq:quad} is given by
\begin{equation}
\frac{1}{2}(-\tau c - \sqrt{c(\tau^2 c -4)}) \le -\frac{1}{2}(\tau m_0 a + \sqrt{m_0 a(\tau^2 m_0 a -4)}), 
\end{equation}
and the region to its right is an invariant of the dynamics \eqref{eq:ed}. We conclude the following.
\begin{proposition}[{\bf Existence of global smooth solution}]\label{prop:1Dexist} 
Consider the 1D anticipation hydrodynamic system \eqref{eq:1Dhydro} with uniformly convex potential $0< a\leq U''\leq A$. It admits a global smooth solution for sub-critical initial data, $(\rho_0,u_0)$, satisfying
\[
\min_x u'_0(x) \ge -\frac{1}{2}(\tau m_0 a + \sqrt{m_0 a(\tau^2 m_0 a -4)}), \qquad \tau \geq \frac{2}{\sqrt{m_0 a}}.
\]
\end{proposition}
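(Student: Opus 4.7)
My approach follows the critical threshold methodology developed for Euler-type alignment systems (cf.~\cite{TT2014,HeT2017}): I will propagate a uniform $L^\infty$ bound on the velocity gradient $\texttt{d}:=\partial_x u$ along Lagrangian characteristics via the Riccati equation \eqref{eq:quad}, and then invoke standard continuation to obtain global classical solutions.

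The derivation of the Riccati ODE \eqref{eq:ed}-\eqref{eq:quad}, $\texttt{d}' = -\texttt{d}^2 - c(1+\tau \texttt{d})$ along $\dot{x}=u$, is already carried out above, with coefficient $c(t,x)=\int U''(|x^\tau-y^\tau|)\rho(y)\rd{y}\in[m_0 a,m_0 A]$ guaranteed by the uniform convexity bounds $a\leq U''\leq A$ and mass conservation $\int\rho\rd{x}=m_0$. The heart of the argument is to show that the subcritical set $\{\texttt{d}\geq \texttt{d}_0\}$, with $\texttt{d}_0:=-\tfrac{1}{2}(\tau m_0 a+\sqrt{m_0 a(\tau^2 m_0 a-4)})$, is invariant under this dynamics for every admissible coefficient. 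By construction $\texttt{d}_0$ is a root of $\texttt{d}^2+c\tau \texttt{d}+c=0$ when $c=m_0 a$, so evaluating the RHS of \eqref{eq:quad} at $\texttt{d}=\texttt{d}_0$ for an arbitrary $c\in[m_0 a,m_0 A]$ yields
\[
\texttt{d}'(\texttt{d}_0;c) = -(c-m_0 a)(1+\tau \texttt{d}_0).
\]
Since $\texttt{d}_0\leq -\tau m_0 a/2$ and $\tau^2 m_0 a\geq 4$, one has $1+\tau \texttt{d}_0\leq -1$, and therefore $\texttt{d}'(\texttt{d}_0;c)\geq 0$ for every $c\geq m_0 a$. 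The subcritical threshold therefore propagates: $\texttt{d}(t,x)\geq \texttt{d}_0$ for all $t\geq0$.

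For a matching upper bound, observe that whenever $\texttt{d}\geq 0$ all three terms $-\texttt{d}^2,-c\tau \texttt{d},-c$ on the RHS are non-positive, whence $\texttt{d}'\leq 0$; this forces $\texttt{d}\leq \max\{0,\|u'_0\|_\infty\}$ along every characteristic. Combined with the lower bound, $\partial_x u$ is uniformly bounded in space and time. The continuity equation then reads $\rho'=-\texttt{d}\,\rho$ along characteristics, preserving positivity of $\rho$ and a pointwise $L^\infty$ bound on the density; in particular no vacuum forms from positive initial data. A standard Gr\"onwall/bootstrap argument on Sobolev norms of $(\rho,u)$, exploiting the smoothness and boundedness of $U$ and $U''$, then propagates the higher regularity globally in time.

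The Riccati invariance argument is transparent once the threshold $\texttt{d}_0$ is correctly identified, so the main technical delicacy lies in the higher-derivative estimates. The nonlocal anticipation term $\int \nabla U(|x^\tau-y^\tau|)\rd\rho(y)$ depends through $x^\tau=x+\tau u$ on derivatives of $u$ itself, introducing an implicit feedback at each order. With the uniform $L^\infty$ bound on $\partial_x u$ in hand, however, this feedback is harmless: bounded Lipschitz velocity flows together with boundedness of $U''$ allow the bootstrap to close, completing the argument.
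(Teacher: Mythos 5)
Your proposal is correct and follows essentially the same route as the paper: the Riccati equation \eqref{eq:quad} with coefficient $c\in[m_0a,m_0A]$, and forward invariance of the region above the smaller root of $\texttt{d}^2+c\tau\texttt{d}+c=0$ frozen at $c=m_0a$ (your direct evaluation $-(c-m_0a)(1+\tau\texttt{d}_0)\ge 0$ is just a cleaner way of stating the paper's monotonicity of the smaller root in $c$). The only additions are the upper bound on $\texttt{d}$ and the continuation/bootstrap details, which the paper leaves implicit; note only that the density bound is exponential-in-time rather than uniform, which is all the continuation argument needs.
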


\end{document}